\documentclass[11pt]{amsart}

\usepackage{amssymb,amsfonts,mathtools}
\usepackage[all,arc]{xy}
\usepackage{enumerate}
\usepackage{mathrsfs}
\usepackage{amsmath}
\usepackage{amsthm}
\usepackage{tikz}
\usepackage{tikz-cd}
\usepackage[utf8]{inputenc}
\usepackage{lipsum}
\usepackage{extpfeil}
\usepackage{colonequals}
\usepackage{graphicx}
\usepackage{subfig}
\usepackage{enumitem}
\usepackage{array}
\usepackage{xcolor}
\colorlet{RED}{red}
\usepackage{bm, bbm}

\newtheorem{thm}{Theorem}[section]

\newtheorem*{conj*}{Conjecture}
\newtheorem{cor}[thm]{Corollary}
\newtheorem{prop}[thm]{Proposition}
\newtheorem{lem}[thm]{Lemma}

\theoremstyle{definition}

\theoremstyle{remark}
\newtheorem{rmk}[thm]{Remark}
\newtheorem*{rmk*}{Remark}

\DeclareMathOperator{\RR}{\mathbb{R}}
\DeclareMathOperator{\ZZ}{\mathbb{Z}}
\DeclareMathOperator{\QQ}{\mathbb{Q}}
\DeclareMathOperator{\NN}{\mathbb{N}}
\DeclareMathOperator{\CC}{\mathbb{C}}
\DeclareMathOperator{\FF}{\mathbb{F}}
\DeclareMathOperator{\AAA}{\mathbb{A}}

\DeclareMathOperator{\Gal}{\operatorname{Gal}}

\DeclareMathOperator{\tr}{\operatorname{tr}}

\DeclareMathOperator{\SL}{\mathrm{SL}}
\DeclareMathOperator{\GL}{\mathrm{GL}}

\DeclareMathOperator{\ord}{\mathrm{ord}}

\DeclareMathOperator{\Real}{\operatorname{Re}}
\DeclareMathOperator{\Ima}{\operatorname{Im}}
\DeclareMathOperator{\Cl}{Cl}

\usepackage{tikz-cd}
\usepackage{float}
\usepackage[margin=1in]{geometry}
\usepackage{array}
\usepackage{comment}
\subjclass[2020]{11F66, 11M26, 11M41}

\keywords{Zero density estimates, automorphic $L$-functions, Dirichlet polynomials}

\title[Zero density estimates for $\GL_2$ automorphic $L$-functions]{Zero density estimates for $\GL_2$ automorphic $L$-functions}

\author[C. Cossaboom]{Catherine H. Cossaboom}
\address{Mathematical Institute, University of Oxford}
\email{catherine.cossaboom@maths.ox.ac.uk}
\date{}

\begin{document}
\begin{abstract} We prove zero density estimates for $L$-functions of cuspidal automorphic representations $\pi$ of $\GL_2(\AAA_{\QQ})$. We show that $N_\pi(\sigma, T) \ll T^{\frac{5}{2}(1 - \sigma) + o(1)}$, where $N_\pi(\sigma, T)$ denotes the number of zeros $\rho = \beta + i\gamma$ of $L(s,\pi)$ with $\beta \geq \sigma$ and $|\gamma| \leq T$. The key input is an extension of the Guth--Maynard argument to Dirichlet polynomials whose coefficients satisfy a weaker $\ell^q$ bound.
\end{abstract}

\maketitle

\section{Introduction}\label{intro}

Zero density estimates bound how often the zeros of an $L$-function can lie away from the critical line. These estimates produce important arithmetic consequences; they can serve as unconditional substitutes for the Riemann Hypothesis in applications in prime number theory and beyond \cite[Chapter~14]{Mont}; see also \cite[Chapter~10]{IK}.

Let $L(s, \pi)$ be the $L$-function of a unitary cuspidal automorphic representation $\pi$, and let $N_\pi(\sigma, T)$ denote the number of zeros $\rho = \beta + i\gamma$ of $L(s, \pi)$ in the rectangle $\sigma \leq \beta \leq 1$, $|\gamma| \leq T$. While the Generalized Riemann Hypothesis (GRH) would exclude all zeros with $\beta > \frac{1}{2}$, the Density Hypothesis gives the weaker quantitative prediction 
\[ N_\pi(\sigma, T) \ll_\pi T^{2(1-\sigma) + o(1)} \]
but it nevertheless suffices for many of the same applications as GRH. One might hope to obtain the conjecture by the zero detection method, which reduces the problem of bounding $N_\pi(\sigma,T)$ to the problem of estimating the frequency of large values of Dirichlet polynomials.

The large-values problem may be formulated as follows. Let 
\[ D(t) = \sum_{N < n \leq 2N} c_n n^{it}\]
be a Dirichlet polynomial with $1$-bounded coefficients, i.e. $|c_n| \leq 1$, and suppose $D(t)$ takes values of size at least $N^\sigma$ on a $1$-separated set $W \subset [0,T]$. Montgomery's large values conjecture \cite[p.~142, Conjecture~2]{MontgomeryTenLectures} predicts
\begin{equation}\label{montgomery-conjectural}
|W| \leq C(\sigma) T^{o(1)} N^{2- 2 \sigma}
\end{equation}
for some constant $C(\sigma)$ depending on $\sigma$. The central problem is to understand how closely one can approach \eqref{montgomery-conjectural} in the ranges of $N$, $T$, and $\sigma$ arising from zero detection.

Guth and Maynard \cite{GM} recently established new bounds for large values of Dirichlet polynomials with $1$-bounded coefficients. Their estimate improves the classical Montgomery--Hal\'{a}sz--Huxley bounds \cite{HuxleyLargeValues, MontgomeryLargeValues} in the range where a polynomial of length $N$ takes values of size $N^{\sigma}$ with $\frac{7}{10} \leq \sigma \leq \frac{4}{5}$. As an immediate corollary, they lower Huxley's zero density exponent for the Riemann zeta function from $12/5$ to $30/13$, the first improvement in this estimate since 1972 \cite{Huxley}.

We apply the Guth--Maynard method to $\GL_2$ automorphic $L$-functions. In the $T$-aspect, the Dirichlet polynomials in the zero detection method can have length as large as $T$, as opposed to $T^{1/2}$, which arises in the study of Dirichlet $L$-functions. We prove the following theorem.

\begin{thm}\label{mainautomorphic} Let $\pi$ be a cuspidal automorphic representation of $\GL_2(\AAA_{\QQ})$ with unitary central character, and let $L(s, \pi)$ be its standard $L$-function. Let $N_\pi(\sigma, T)$ denote the number of zeros $\rho$ of $L(s, \pi)$ with $\Real(\rho) \geq \sigma$ and $|\Ima(\rho)| \leq T$. Uniformly for $\frac{1}{2} \leq \sigma \leq 1$, we have 
\[ N_\pi(\sigma, T) \ll T^{\frac{5}{2}(1-\sigma) + o(1)}. \]
\end{thm}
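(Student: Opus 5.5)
The proof will follow the classical zero detection method, with the Guth--Maynard large values estimate as the main new input; the key point is to control the Hecke coefficients $\lambda_\pi(n)$ in an averaged $\ell^q$ sense rather than pointwise.

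\textbf{Zero detection.} I would mollify $L(s,\pi)$ by $M_X(s,\pi)=\sum_{n\le X}\mu_\pi(n)n^{-s}$, where $\mu_\pi$ denotes the Dirichgent coefficients of $L(s,\pi)^{-1}$. I would then smooth $L(s,\pi)M_X(s,\pi)$ with $e^{-n/Y}$ and shift contours to the line $\Real s=\frac12$. Here I would use convexity, or the mean value $\int_T^{2T}|L(\frac12+it,\pi)|^2\,dt\ll T^{1+o(1)}$, in place of the $\zeta$ subconvexity used in the Huxley argument. The upshot is that each zero $\rho$ with $\beta\ge\sigma$ forces either a Type I or a Type II event.
\begin{itemize}
\item Type I: a dyadic Dirichlet polynomial $\sum_{N<n\le2N}a_n n^{-i\gamma}$ has size $\gg N^{\sigma}T^{-o(1)}$ at $\gamma$. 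Here $X\le N\le YT^{o(1)}$, and $a_n=\sum_{d\mid n,\,d\le X}\mu_\pi(d)\lambda_\pi(n/d)$.
\item Type II: $L(\frac12+i\gamma+it,\pi)$ is large on average near $\gamma$.
\end{itemize}
Since the analytic conductor in the $t$-aspect is $\asymp T^2$, the optimal $Y$ is about $T^{1+o(1)}$, which is why the length $N$ can reach $T$. The zeros can be taken $1$-separated after losing a factor $\log T$, so $N_\pi(\sigma,T)$ is bounded by the size of a large-values set $W\subset[0,T]$ for polynomials of length $N\in[T^{\epsilon},T]$.

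\textbf{Coefficients.} The $a_n$ are not $1$-bounded, since only $|\lambda_\pi(n)|\le d(n)n^{7/64}$ is known. What Rankin--Selberg does give is $\sum_{n\le N}|\lambda_\pi(n)|^2\ll N$. Combined with $L^4$-type bounds (e.g.\ through $\mathrm{Sym}^2$, which is automorphic by Gelbart--Jacquet), this yields $\sum_{n\le N}|a_n|^q\ll N^{1+o(1)}$ for $q=2$, and possibly for $q=4$. I would therefore restate the large values problem for coefficients satisfying an $\ell^q$ bound. The first step is to dyadically decompose by $|a_n|\approx 2^k$. The level sets where $|a_n|$ is large are sparse, and there the $\ell^2$ mean value theorem $\sum_{t\in W}|D(t)|^2\ll (N+T)\|a\|_2^2$ gives the needed bound. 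The remaining essentially bounded piece is where the Guth--Maynard argument applies.

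\textbf{Guth--Maynard for $\ell^q$ coefficients (main obstacle).} I expect the central step to be rerunning Guth--Maynard with $\|a\|_\infty\le 1$ replaced by the $\ell^q$ bound, most likely $q=4$ coming from Rankin--Selberg for $\pi\times\pi$ or $\mathrm{Sym}^2$. That argument uses the coefficients in two places.
\begin{itemize}
\item The first is the $\ell^2$ norm in the dual large sieve and in $WW^*$. Here $\|a\|_2^2\ll N^{1+o(1)}$ still holds, so this use survives.
\item The second is the sixth-moment-type estimate for the $\zeta$ integrals $\int|\sum_{n\sim N}n^{it}|$. These depend only on the phases, not on the coefficients, so they transfer.
\end{itemize}
The delicate point is the step that counts $t\in W$ with $|D(t)|\ge V$ through Hölder and bounds $\sum|a_n|^{2k}$. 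This is where $\ell^q$ enters, and each appearance of $\|a\|_\infty$ should become a power of $N^{1/q}$ at a controlled cost. I would track the exponents carefully and aim for the Guth--Maynard bound up to $T^{o(1)}$, namely $|W|\ll T^{o(1)}(N^{2-2\sigma}+N^{18/5-4\sigma}+TN^{\ldots})$. Failing that, I would aim for a bound at least strong enough in the range $N\le T$.

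\textbf{Optimization.} Finally I would combine the two bounds. The mean value theorem gives $|W|\ll N^{2-2\sigma}+TN^{1-2\sigma}$. The extended Guth--Maynard bound covers $N$ near $T^{\theta}$, and Huxley-type bounds cover the remaining ranges. Using $N\le T$, the worst cases should balance at $|W|\ll T^{\frac52(1-\sigma)+o(1)}$. For $\sigma$ near $\frac12$ this is dominated by the trivial bound $N_\pi(\sigma,T)\ll T\log T$, and for $\sigma$ near $1$ by the mean value bound, so the estimate holds uniformly in $\sigma$.
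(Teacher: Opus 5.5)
Your zero-detection setup (mollifier, $e^{-n/Y}$ smoothing, shift to the critical line with a second-moment bound for Type II, $Y\approx T$) matches the paper's. The genuine gap is in the step carrying all the new content: handling coefficients that satisfy only an $\ell^q$ bound.

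\textbf{Why your two suggested routes fail.} First, the split ``sparse large level sets via $\ell^2$ bounds, essentially bounded remainder via Guth--Maynard'' leaves a middle range of $B$ uncovered. Take $q=4$, so $\|\mathbf c_B\|_2^2\ll N^{1+o(1)}B^{-2}$ on $\mathcal I_B=\{n\sim N: B<|c_n|\le 2B\}$. Look at $\sigma=3/4$ and $N=T^{2/3}$, where the target is $T^{5/8}$.
\begin{itemize}
\item Huxley with $G=\|\mathbf c_B\|_2^2$ gives $|W_B|\lessapprox T^{1/3}B^{-2}+T^{2/3}B^{-6}$. This suffices only if $B\ge T^{1/144}$. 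The mean value theorem for $D_B$ or $D_B^2$ is weaker still.
\item Guth--Maynard applied to the $1$-bounded $c_n/(2B)$ at threshold $N^{\sigma}/(2B)$ has main term $B^4T^{3/5}$. This suffices only if $B\le T^{1/160}$.
\end{itemize}
So blocks with $T^{1/160}<B<T^{1/144}$ are handled by neither, and ``essentially bounded'' cannot mean $T^{o(1)}$-bounded. Second, your fallback of replacing each $\|a\|_\infty$ by $N^{1/q}$ costs a genuine power of $N$ in the additive energy, not a controlled amount.

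\textbf{The missing idea: a blockwise trade-off.} Run Guth--Maynard on each block so that its two coefficient inputs offset each other.
\begin{itemize}
\item In the linear-algebra step $|W|N^{2\sigma}\le s_1(M)^2\|\mathbf c_B\|_2^2$, the point is not that $\|\mathbf c\|_2^2\ll N$ ``survives''. It is that on $\mathcal I_B$ the norm improves to $\|\mathbf c_B\|_2^2\ll N^{1+o(1)}B^{2-q}$, which is a gain.
\item The delicate step is not a H\"older count of $\sum|a_n|^{2k}$. It is the additive-energy lemma \cite[Lemma~11.4]{GM}: there $|D(t_1+t_2-t_3)|^2$ is expanded and $c_{n_1}\overline{c_{n_2}}$ is bounded pointwise, so that Heath-Brown's difference-set estimate applies to the unweighted moments $\sum_{n_1,n_2\sim N}|R(n_1/n_2)|^k$. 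On $\mathcal I_B$ you use $|c_n|\le 2B$. This gives $E(W)\lessapprox B^2N^{-2\sigma}\sum_{n_1,n_2\sim N}|R(n_1/n_2)|^3$, so positive powers of $B$ enter $S_3$.
\end{itemize}
Substituting into $|W|^3B^{3q-6}N^{6\sigma-3}\lessapprox N^3+S_1+S_2+S_3$, every power of $B$ is nonpositive exactly when $q\ge 8/3$. The binding term is $B^{4-3q/2}T^{1/2}N^{3-4\sigma}$. Hence the worst block is $B\asymp 1$, and the bounded-coefficient Guth--Maynard estimate is recovered.

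\textbf{Consequence for the moment input.} The fourth moment is therefore not optional, contrary to your ``possibly for $q=4$''. With only the $\ell^2$ bound, the method does not beat $8/3$ at all; it improves on the classical exponent only for $q>5/2$. The needed $q\ge 8/3$ comes from $|\lambda_\pi(n)|^4\le\lambda_{\pi\times\widetilde\pi}(n)^2$ together with Rankin--Selberg for $\Pi\times\widetilde\Pi$, where $\Pi=\mathbf 1\boxplus\operatorname{Ad}\pi$ is automorphic by Gelbart--Jacquet. This bound must then be transferred to the mollified and powered coefficients via H\"older and the divisor bound.

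\textbf{Smaller points.}
\begin{itemize}
\item You need Jutila's device (raising the Type I sum to a power) to force $N\ge Y^{1/2}$. None of the bounds you list handles $N=T^{\epsilon}$.
\item Near $\sigma=1$ the mean value theorem gives only the exponent $4/(3-2\sigma)>5/2$. You need Huxley with $Y=T^{1/\sigma}$, which gives $2/\sigma$.
\item The trivial bound covers only $\sigma\le 3/5$. The range $3/5<\sigma\le 7/10$ needs Ingham with $Y=T^{2/(3-2\sigma)}$.
\end{itemize}
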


For $\pi$ corresponding to a holomorphic cuspidal eigenform for $\SL_2(\ZZ)$, Ivi\'{c} \cite{Ivic} proved the uniform zero density estimate
\begin{equation}\label{ivicestimate} N_\pi(\sigma, T) \ll T^{\frac{8}{3}(1-\sigma) + o(1)}, \end{equation}
for $\frac{1}{2} \leq \sigma \leq 1$, later reproved by Yashiro \cite{Yashiro}. The same estimate holds for Hecke--Maass forms; see \cite{SankaranarayananSengupta,Tang}. The extension from full level to arbitrary fixed level is routine. In either case, the bound may also be recovered from Heath-Brown's zero density argument \cite{HB}: the pointwise divisor bounds used for the coefficients there may be replaced by $\ell^2$ bounds supplied by Rankin--Selberg theory. Thus, Theorem \ref{mainautomorphic} improves the known uniform exponent from $8/3$ to $5/2$.

Note that any cuspidal representation $\pi$ is unitarizable, and its $L$-function differs from that of the associated unitary representation only by a shift in $s$. For noncuspidal automorphic representations, the standard $L$-function factors as a product of Dirichlet $L$-functions, so the problem reduces to the degree $1$ case where stronger zero density estimates are available.

The principal technical obstacle in proving Theorem~\ref{mainautomorphic} arises for Maass forms, where the Fourier coefficients are not necessarily known to satisfy the Ramanujan conjecture, and the large-values theorem of Guth and Maynard cannot be applied directly. In contrast to classical large-values estimates, such as those of Montgomery--Hal\'{a}sz--Huxley, an $\ell^2$ bound on the coefficients does not suffice for the Guth--Maynard method. A key bottleneck in their argument is the ability to bound the additive energy of a $1$-separated set efficiently, and this step relies on an $\ell^\infty$ hypothesis. 

We overcome this by showing that the $\ell^\infty$ condition can be replaced by $\ell^q$ control with $q < \infty$. For $q \geq 8/3$, this yields the full Guth--Maynard large-values estimate. In turn, our argument clarifies how much moment control of the coefficients is required in the Guth--Maynard method.

\begin{thm}\label{momentGM} Let $q \geq 8/3$ and let $N, T \geq 2$. Let $(c_n)_{n \in \NN} \subset \CC$ and suppose that 
\[ \sum_{N < n \leq 2N} |c_n|^q \ll N^{1 + o(1)}.\]
Let $W\subset [0,T]$ be a $1$-separated set such that
\[\left|\sum_{N < n \leq 2N}c_n n^{it}\right|\ge V\]
for every $t\in W$. Then, we have 
\[ |W| \leq (NT)^{o(1)} \left( N^2 V^{-2} + N^{18/5} V^{-4} + TN^{12/5} V^{-4} \right).\]
\end{thm}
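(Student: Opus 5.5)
We follow the Guth--Maynard argument and change only the steps where $|c_n|\le 1$ is used. The plan is first to reduce to coefficients of roughly constant size, then to track how the $\ell^q$ hypothesis enters each step.

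\textbf{Dyadic decomposition.} We split the coefficients dyadically, $c_n = \sum_{A} c_n \mathbf{1}_{|c_n|\sim A}$. There are $O(\log(NT))$ dyadic ranges of $A$ that matter. Tiny values $A \le N^{-10}$ contribute negligibly. By the hypothesis and Chebyshev, the support $S_A=\{n: |c_n|\sim A\}$ has size $|S_A|\ll N^{1+o(1)}A^{-q}$. By pigeonhole, at the cost of $(NT)^{o(1)}$, it suffices to treat a single $A$ on a subset $W'\subset W$ with $|W'|\gg |W|/\log$, where $|\sum_{n\in S_A} c_n n^{it}|\gg V/\log$ on $W'$.

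\textbf{Normalisation.} We write $c_n = A b_n$ with $|b_n|\le 1$ supported on a set of size $M:=|S_A|\le N^{1+o(1)}A^{-q}$. The problem becomes a large values problem for a $1$-bounded sparse polynomial $\sum b_n n^{it}$ with threshold $V/A$. For $A\le 1$ we simply apply Guth--Maynard with $|b_n|\le 1$ after absorbing $A$, which only helps. The real case is $A>1$, where $M\le N^{1+o(1)}A^{-q}$.

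\textbf{Rerunning Guth--Maynard.} Here we do not quote the Guth--Maynard bound as a black box. We rerun their argument for the sparse $1$-bounded polynomial with support size $M$ and threshold $U=V/A$. Each estimate becomes a function of $(N,M,U,T)$:
\begin{itemize}
\item the $\ell^2$/mean value step gives $|W|\ll (N+T)\,M\,U^{-2}$, and the $N^2V^{-2}$ term uses $N\cdot M A^2 \le N^2 A^{2-q}$;
\item the large-sieve/additive-energy steps involve $\sum|b_n|^2\le M$ in place of $N$;
\item the key additive energy bound for the $1$-separated set $W$ in Guth--Maynard uses $|\sum b_n n^{it}|^2$ and the Fourier-analytic count of $W$. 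In the sparse setting it produces bounds with $M$ replacing $N$ in the appropriate places, while $N$ persists in the frequency scale.
\end{itemize}
We then substitute $U=V/A$ and $M=N A^{-q}$ into each term. The target is to show that every resulting term is bounded by $N^{18/5}V^{-4}$, $TN^{12/5}V^{-4}$, or $N^2V^{-2}$, uniformly in $A\ge 1$. A term scaling like $M^{\alpha}U^{-\beta}=N^{\alpha}V^{-\beta}A^{\beta-q\alpha}$ is harmless when $q\alpha\ge\beta$.

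\textbf{Main obstacle.} The hard step is the additive energy estimate. In Guth--Maynard the $\ell^\infty$ bound lets one replace the coefficients by $1$ and bound the energy of $W$ through a fourth-moment or $\ell^4$-type quantity. In the present setting the relevant exponents are ratios such as $\beta/\alpha = 8/3$, arising from the fourth power $V^{-4}$ against the $M^{3/2}$-type factor coming from the energy. This ratio is exactly where the condition $q\ge 8/3$ should emerge. I would first check this exponent bookkeeping in the energy bound, and only then combine it with Guth--Maynard's case analysis, which is otherwise unchanged.
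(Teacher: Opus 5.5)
Your skeleton (split by coefficient size, normalize to $1$-bounded coefficients with threshold $U=V/A$, check that every power of $A$ is nonpositive) is the paper's. The gap is at the one step with real content. You assert that the additive-energy bound ``produces bounds with $M$ replacing $N$,'' and you defer the exponent check; the first is not available, and the second is the proof.

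In the Guth--Maynard argument only two steps see the coefficients. The first is the $\ell^2$ step $\sum_{t\in W}|D(t)|^2\le s_1^2\|\mathbf c\|_2^2$, where $s_1$ is the largest singular value of the full smoothed matrix $(w(n/N)n^{it})_{t\in W,\,n\sim N}$. That matrix must not be restricted to $S_A$, because the trace computations use Poisson summation in $n$. The second is the energy bound \cite[Lemma~11.4]{GM}. The bounds for $S_1,S_2$, and Proposition~\ref{S3} (which bounds $S_3$ via $E(W)$), involve only $W$. So sparsity enters through $\|\mathbf c_A\|_2^2\ll N^{1+o(1)}A^{2-q}$, as in your mean-value bullet.

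In the energy step nothing justifies replacing $N$ by $M$. The discrete cubic moment of $R(x)=\sum_{t\in W}x^{it}$ is controlled by Heath-Brown's difference-set estimate and the gcd split of \cite[Lemmas~11.8--11.9]{GM}, which use the whole range $n\sim N$. A bound $|S_A|\le M$ says nothing about ratios $n_1/n_2$ concentrating in short intervals or along common dilates (Remark~\ref{longrmk}). The paper instead applies \cite[Lemma~11.4]{GM} to $c_n/(2A)$ with the lowered threshold $N^\sigma/(2A)$, then enlarges the sum to all $n_1,n_2\sim N$:
\[ E(W)\lessapprox A^{2}N^{-2\sigma}\sum_{n_1,n_2\sim N}\Big|R\Big(\frac{n_1}{n_2}\Big)\Big|^3\lessapprox |W|A^4N^{4-4\sigma}+A^2T^{1/4}|W|^{21/8}N^{1-2\sigma}+A^2|W|^3N^{1-2\sigma}. \]
This is a pure loss in $A$, with no sparsity gain.

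The argument works because this loss is paid for by the $\ell^2$ saving. At the local scale $T=N^{6/5}$ with $V=N^\sigma$, cubing the $\ell^2$ step gives $|W|^3A^{3q-6}N^{6\sigma-3}\lessapprox N^3+S_2+S_3$. The binding contribution is the term $A^2T|W|N^{3-2\sigma}$ of $TN|W|^{1/2}E(W)^{1/2}$ in $S_3$. It yields $|W|\lessapprox A^{4-3q/2}T^{1/2}N^{3-4\sigma}$, i.e.\ $M^{3/2}U^{-4}T^{1/2}N^{3/2}$ in your variables.

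Your ratio $8/3$ is therefore right, but the mechanism you describe is not what produces it. The factor $M^{3/2}$ is the square root of $M^3$ from the cubed $\ell^2$ step. The energy contributes the full $N^{3/2}$ and one factor $U^{-2}$. The threshold $8/3$ arises precisely because the energy step gains nothing from sparsity; any genuine gain there would lower the $A$-exponent of this term, and $8/3$ would no longer be binding.

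To finish, you must carry out the bookkeeping for every term. After the routine reductions (to the local scale $T=N^{6/5}$, to $T^\varepsilon$-separated $W$ with a smooth weight, and to $4N^{7/10}\le V\le N^{4/5}$, where the classical bounds need only $\ell^2$ control), check that the $A$-exponents
\[ 2-q,\quad 6-3q,\quad \tfrac{3k(2-q)}{k+1},\quad \tfrac{12k(2-q)}{4k+3},\quad 4-2q,\quad 4-\tfrac{3q}{2},\quad 7-3q,\quad \tfrac{16(7-3q)}{19} \]
are all nonpositive for $q\ge 8/3$. The binding one is $4-\tfrac{3q}{2}$.
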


For $ 2 < q< \frac{8}{3}$, the argument also gives $q$-dependent variants of the large-values estimate; these give zero density estimates strictly between the classical exponent $8/3$ and the exponent $5/2$ obtained here once $q > \frac{5}{2}$.

We deduce Theorem \ref{mainautomorphic} from a fourth-moment bound for the \mbox{Dirichlet} coefficients $\lambda_\pi(n)$, which follows from the symmetric-square functoriality theorem of Gelbart--Jacquet \cite{GJ} and Rankin--Selberg theory \cite{JPSS}. The symmetric-cube and symmetric-fourth functoriality theorems of Kim--Shahidi \cite{KS} and Kim \cite{Kim} imply sixth- and eighth-moment bounds, respectively, but this stronger input does not improve our argument.

Theorem \ref{mainautomorphic} has a direct consequence for the distribution of Hecke eigenvalues at primes in short intervals, provided that the corresponding $L$-function has a sufficiently wide zero-free region. More precisely, suppose that, for some $c>0$, possibly depending on $\pi$, $L(s,\pi)$ has no zeros $s = \sigma + it$ in the Vinogradov--Korobov region
\begin{equation}\label{VK-hypothesis} \sigma \geq 1-\frac{c}{(\log (|t|+3))^{2/3}(\log\log (|t|+3))^{1/3}}.
\end{equation}
Then, Theorem~\ref{mainautomorphic} gives the following conditional short-interval estimate.

\begin{cor}\label{shortintervalcoefficients}
Let $\pi$ be a cuspidal automorphic representation of $\GL_2(\AAA_{\QQ})$ with unitary central character, and let $L(s, \pi)$ denote its standard $L$-function. Suppose that $L(s, \pi)$ satisfies \eqref{VK-hypothesis}. Then, for every fixed $\varepsilon>0$,
\[ \sum_{x<p\leq x+h} \lambda_\pi(p)\log p \ll_{\pi, \varepsilon} h \exp \bigl(-(\log x)^{1/4} \bigr) \]
uniformly for $x^{3/5+\varepsilon}\leq h\leq x$ as $x\to\infty$.
\end{cor}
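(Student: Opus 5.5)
We argue via the truncated explicit formula for $\psi_\pi(x) = \sum_{n \le x} \Lambda_\pi(n)$ and the zero-free region \eqref{VK-hypothesis}, with Theorem~\ref{mainautomorphic} controlling the zeros inside the critical strip. We first reduce to the prime-power sum. Hecke eigenvalues at prime powers satisfy $\Lambda_\pi(p^k) \ll k^{O(1)} p^{k\theta}\log p$ with $\theta = 7/64$. Hence prime powers $p^k$ with $k \ge 2$ contribute $O(x^{1/2+\theta+o(1)})$, which is admissible since $h \ge x^{3/5}$. It therefore suffices to show
\[ \psi_\pi(x+h) - \psi_\pi(x) \ll h\exp(-(\log x)^{1/4}). \]
We also recall that cuspidality gives no pole at $s=1$, so there is no main term.

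\textbf{Step 1 (explicit formula).} For $2 \le T \le x$, the standard truncated explicit formula for $L(s,\pi)$ gives
\[ \psi_\pi(x) = -\sum_{|\gamma|\le T} \frac{x^\rho}{\rho} + O\!\left(\frac{x^{1+\theta+o(1)}}{T}\right). \]
Because the error term must be $o(h)$, the crude $x^\theta$ loss would be too costly. Instead we use the Rankin--Selberg bound $\sum_{n\le x}|\Lambda_\pi(n)|^2 \ll x\log^2 x$, which is available in the paper's setting, together with a smoothed Perron argument. This brings the error down to $x^{1+o(1)}/T$. Taking differences then gives
\[ \psi_\pi(x+h)-\psi_\pi(x) \ll \sum_{|\gamma|\le T} \left|\frac{(x+h)^\rho - x^\rho}{\rho}\right| + \frac{x^{1+o(1)}}{T} \ll h\sum_{|\gamma|\le T} x^{\beta-1} + \frac{x^{1+o(1)}}{T}. \]
We choose $T = x^{1+\varepsilon/2}/h$, so that $T \le x^{2/5-\varepsilon/2}$ and the error term is $\ll h x^{-\varepsilon/3}$.

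\textbf{Step 2 (zero sum).} Write $\sum_{|\gamma|\le T} x^{\beta-1} = \int_0^1 x^{\sigma-1}\, dN_\pi(\sigma,T)$ and integrate by parts:
\[ \sum_{|\gamma|\le T} x^{\beta-1} \ll x^{-1}N_\pi(0,T) + \log x\int_0^{1-\eta(T)} x^{\sigma-1}N_\pi(\sigma,T)\,d\sigma. \]
Here $\eta(T) = c(\log T)^{-2/3}(\log\log T)^{-1/3}$ is the width of the zero-free region from \eqref{VK-hypothesis}, which cuts off the integral near $\sigma = 1$. For $\sigma<1/2$ we use the trivial count $N_\pi(\sigma,T) \ll T\log T$. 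For $\sigma\ge 1/2$, Theorem~\ref{mainautomorphic} gives
\[ x^{\sigma-1}N_\pi(\sigma,T) \ll \left(T^{5/2+o(1)}/x\right)^{1-\sigma}. \]
Since $T^{5/2} \le x^{1-5\varepsilon/4}$, the base satisfies $T^{5/2+o(1)}/x \le x^{-\delta}$ for some $\delta = \delta(\varepsilon) > 0$, so the integrand is $\le x^{-\delta(1-\sigma)}$.

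\textbf{Step 3 (conclusion).} On $\sigma \le 1-\eta(T)$ we have $1-\sigma \ge \eta(T) \gg (\log x)^{-2/3-o(1)}$. Therefore
\[ x^{-\delta(1-\sigma)} \le \exp\!\left(-\delta(\log x)^{1/3 - o(1)}\right), \]
which is much smaller than $\exp(-(\log x)^{1/4})$ after absorbing the $\log x$ factors. Combining Steps 1–3 gives the claimed bound uniformly for $x^{3/5+\varepsilon} \le h \le x$.

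The main obstacle is Step 1: we must obtain the truncated explicit formula with an error of size $x^{1+o(1)}/T$ even though the Ramanujan conjecture is not known. The route is to use only $\ell^2$ control of $\Lambda_\pi$ from Rankin--Selberg, and if necessary to smooth the interval $[x,x+h]$, with a weight of width $hx^{-\varepsilon/10}$, before applying Perron's formula. The point that needs care is that the boundary losses from the smoothing are themselves controlled by the $\ell^2$ bound via Cauchy--Schwarz in short intervals, which requires $h \ge x^{1/2+\theta}$; this holds comfortably for $h \ge x^{3/5}$.
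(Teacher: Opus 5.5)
\textbf{Main gap: the range of $\sigma$ near $1$.} Your overall structure matches the paper's: explicit formula, a zero sum controlled by density estimates plus the zero-free region, and removal of prime powers. But Step~2 fails near $\sigma=1$.

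Theorem~\ref{mainautomorphic} gives $N_\pi(\sigma,T)\ll T^{\frac52(1-\sigma)+o(1)}$. In the paper's notation this means $N_\pi(\sigma,T)\ll_\kappa T^{\frac52(1-\sigma)+\kappa}$ for each fixed $\kappa>0$. The loss comes from the mollifier length $X=T^\eta$, the $T^\varepsilon$-separation and the other $\lessapprox$ steps, and it is additive in the exponent, not proportional to $1-\sigma$. You used it as $T^{(\frac52+o(1))(1-\sigma)}$.

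Read correctly, it only gives $x^{\sigma-1}N_\pi(\sigma,T)\ll_\kappa (T^{5/2}/x)^{1-\sigma}\,T^{\kappa}$. The gain $(T^{5/2}/x)^{1-\sigma}\le x^{-c_\varepsilon(1-\sigma)}$ cannot absorb $T^\kappa$, a fixed positive power of $x$, once $1-\sigma\le C_\varepsilon\kappa$. Near the edge of the zero-free region the gain is only $\exp(-c(\log x)^{1/3-o(1)})$. You also cannot let $\kappa$ shrink with $x$, since the dependence of the implied constant on $\kappa$ is not quantified. The zero-free region \eqref{VK-hypothesis} removes only $1-\sigma<\eta(T)$. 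So zeros with $\eta(T)\le 1-\beta\le C_\varepsilon\kappa$ are uncontrolled, and your integrand bound $x^{-\delta(1-\sigma)}$ is unjustified there.

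This is exactly why the paper uses Theorem~\ref{mainautomorphic} only for $\sigma\le 9/10$. There $1-\sigma\ge 1/10$, so a small fixed $T^\kappa$ is absorbed into $x^{-c_\varepsilon(1-\sigma)}$. For $9/10\le\sigma\le1$ it invokes Lemma~\ref{nearone}, Heath-Brown's estimate $N_\pi(\sigma,T)\ll_\delta T^{2(1+\delta)(1-\sigma)}(\log T)^A$. Its losses are either proportional to $1-\sigma$ or polylogarithmic. Since $T^{2(1+\delta)}\le x^{1-c_\varepsilon}$ for the relevant $T$, this together with \eqref{VK-hypothesis} gives the saving you need.

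The missing ingredient is such a quantitative density estimate near $\sigma=1$, with exponent constant $c$ small enough that $T^{c}\le x^{1-c_\varepsilon}$. Once you have it, your choice $T=x^{1+\varepsilon/2}/h$ works as well as the paper's $T=\frac{x}{h}\exp(2(\log x)^{1/4})$.

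\textbf{Secondary numerical slip.} Note that $\frac12+\theta=\frac{39}{64}>\frac35$. So neither your prime-power bound $O(x^{1/2+\theta+o(1)})$ nor your smoothing condition $h\ge x^{1/2+\theta}$ holds at the bottom of the range when $\varepsilon<\frac{3}{320}$.
\begin{itemize}
\item The prime powers are easily fixed: count only $p^k\in(x,x+h]$ with $k\ge2$, which gives $\ll(1+hx^{-1/2})x^{\theta}(\log x)^2$.
\item The smoothing boundary needs an argument for intervals of length $H=hx^{-\varepsilon/10}\ge x^{3/5+9\varepsilon/10}$. For instance, combine Cauchy--Schwarz, $|\lambda_\pi(p)|^2\le\lambda_{\pi\times\widetilde\pi}(p)$, and the classical Rankin--Selberg asymptotic with error $O(x^{3/5+o(1)})$. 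This gives $\sum_{|p-x|\le H}|\Lambda_\pi(p)|\ll H\log x$, which suffices.
\end{itemize}
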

The threshold $3/5$ reflects the new zero density exponent $5/2$; the preceding exponent $8/3$ would give the threshold $5/8$. For CM elliptic curves, the required zero-free region follows from \cite{Coleman}, and we obtain the following unconditional specialization.
\begin{cor}\label{frobeniustraces}
Let $E/\QQ$ be an elliptic curve with complex multiplication and conductor $N_E$. Write
\[ a_E(p):=p+1-\#E(\FF_p) \]
for $p \nmid N_E$. Then, for every fixed $\varepsilon>0$,
\[ \sum_{\substack{x<p\leq x+h\\ p\nmid N_E}} \frac{a_E(p)}{\sqrt p}\log p \ll_{E, \varepsilon} h \exp \bigl(-(\log x)^{1/4} \bigr) \]
uniformly for $x^{3/5+\varepsilon}\leq h\leq x$ as $x\to\infty$.
\end{cor}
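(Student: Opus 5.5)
We deduce Corollary~\ref{frobeniustraces} directly from Corollary~\ref{shortintervalcoefficients}. The plan has three steps: identify $a_E(p)/\sqrt p$ with the Hecke eigenvalues $\lambda_\pi(p)$ of a suitable cuspidal $\pi$ on $\GL_2(\AAA_\QQ)$ with unitary central character; check the zero-free region \eqref{VK-hypothesis} for $L(s,\pi)$ using \cite{Coleman}; and dispose of the finitely many primes dividing $N_E$.

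\emph{Step 1: the automorphic object.} By modularity (for CM curves this is classical, via Deuring), there is a weight $2$ newform $f$ of level $N_E$ and trivial character whose Hecke eigenvalues satisfy $a_f(p)=a_E(p)$ for $p\nmid N_E$. Let $\pi$ be the cuspidal representation generated by $f$, normalized to be unitary; its central character is trivial. With the analytic normalization we have
\[
\lambda_\pi(p)=a_E(p)/\sqrt p \qquad (p\nmid N_E),
\]
and $L(s,\pi)=L(E,s+\tfrac12)$. Since $E$ has CM by an imaginary quadratic field $K$, Deuring's theorem also gives $L(E,s)=L(s,\psi)$ for a Hecke Grössencharacter $\psi$ of $K$ of type $(1,0)$. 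Hence $L(s,\pi)=L(s,\psi_u)$, where $\psi_u=\psi|\cdot|^{-1/2}$ is a unitary Hecke character of $K$ of infinite type. Equivalently, $\pi$ is the automorphic induction of $\psi_u$.

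\emph{Step 2: the zero-free region.} This is the only nontrivial input, and the step needing the most care. Coleman \cite{Coleman} proves Vinogradov--Korobov type zero-free regions for Hecke $L$-functions of imaginary quadratic fields, uniform in the infinity type. For a fixed character of this kind this gives a region of the shape \eqref{VK-hypothesis} in the $t$-aspect, with some $c=c(E)>0$.

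Two points must be verified. First, the shape of the region: Coleman's bound is stated in the combined $(t,\text{frequency})$ aspect, and we need to confirm that specializing to fixed frequency yields exactly the exponents $2/3$ and $1/3$ of $\log$ and $\log\log$ (any stronger region is of course acceptable). Second, the possibility of an exceptional zero: this is harmless, since $\psi_u$ is non-real and of infinite type, and in any case only a fixed $L$-function is involved, so $c$ may be shrunk. With these checks, the hypothesis of Corollary~\ref{shortintervalcoefficients} holds for $\pi$.

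\emph{Step 3: conclusion.} Corollary~\ref{shortintervalcoefficients} gives the estimate for $\sum_{x<p\le x+h}\lambda_\pi(p)\log p$. This sum differs from the sum in the statement only at the primes $p\mid N_E$, which are finitely many. For each such $p$ we have $|\lambda_\pi(p)|\le 1$, so together they contribute $O_E(\log x)$. Because $h\ge x^{3/5}$, this error is absorbed into $h\exp(-(\log x)^{1/4})$, which completes the proof.
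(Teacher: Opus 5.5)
Your proposal is correct and follows the paper's argument: the paper likewise realizes $E$ through modularity and CM theory as a dihedral cuspidal $\pi_E$ with $\lambda_{\pi_E}(p)=a_E(p)/\sqrt p$ for $p\nmid N_E$, takes the zero-free region \eqref{VK-hypothesis} from Coleman, and applies Corollary~\ref{shortintervalcoefficients}. Your extra details make explicit what the paper leaves implicit; these are Deuring's Gr\"ossencharacter, the remark that $c$ may be shrunk for a fixed $L$-function, and the bad-prime remark, which is in fact vacuous once $x>N_E$ since then no prime in $(x,x+h]$ divides $N_E$.
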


Our method also produces improved zero density estimates for several other classes of $L$-functions, including certain Dedekind zeta functions, leading to improved results on the distribution of primes in Chebotarev sets over short intervals; see Section~\ref{otherdegree2}. 

The paper is structured as follows. In Section~\ref{overview}, we give an overview of the argument. In Section~\ref{automorphicprop}, we present the relevant properties of automorphic $L$-functions, including the relevant moment estimates for their Dirichlet coefficients. In Section \ref{lfunctionsandmethod}, we formulate a general class of degree $2$ \mbox{$L$-functions} with moment input, and we present the zero detection framework in Section~\ref{zeroframework}. We develop the zero detection argument in this setting in Section \ref{n0n1n2}, establishing the necessary estimates and reducing the main contribution to a large-values problem for Dirichlet polynomials. We prove Theorem \ref{momentGM} in Section \ref{mainprop} and Theorem \ref{mainautomorphic} in Section \ref{mainproof}, and Section \ref{applications} records arithmetic corollaries. In Section \ref{otherdegree2}, we derive improved zero density estimates for certain Dedekind zeta functions. Finally, in Section \ref{highdeg}, we mention the limitations of extending the present method to higher-degree $L$-functions.

\subsubsection*{Notation} We remind the reader of the following notation from \cite{GM}. We write $A \ll B$ to mean $|A| \leq CB$ for some constant $C$ and $|A| \ll_k B$ to mean $C$ may depend on a parameter $k$. We denote the condition that both $A \ll B$ and $B \ll A$ hold by $A \asymp B$, while $A \sim B$ means, more specifically, that $B < A \leq 2B$. For a large parameter $Z$, the notation $Z^{o(1)}$ denotes a subpolynomial factor; in particular, an upper bound $A\ll Z^{o(1)}B$ means that, for every $\delta > 0$, $A\ll_\delta Z^\delta B.$ A subscript in an expression such as $o_\delta(1)$ indicates that the rate of convergence may depend on the fixed parameter $\delta$. Further, $A \lessapprox B$ means that for any $\delta > 0$, $|A| \ll_\delta T^\delta B$. Similarly, $A \lessapprox_k B$ means that $A \ll_{k, \delta} T^\delta B$ for every $\delta > 0$ for a parameter $k$. We also say that a set $W \subset \RR$ is $H$-separated if $|t - t'| \geq H$ for all distinct $t, t' \in W$. Unless otherwise indicated, when an $L$-function, automorphic representation, or moment parameter is fixed, implied constants may depend on that fixed data.

\section*{Acknowledgments}

The author is grateful to James Maynard for his invaluable guidance, encouragement, and support throughout this project. She also thanks Jesse Thorner for helpful conversations. She is grateful for the support of the Marshall Aid Commemoration Commission.

\section{Overview of the Argument}\label{overview}

The first analytic difference in our setting from the case of $\zeta(s)$ is that the Dirichlet polynomials that arise from zero detection are longer. Let $s=\sigma+it$, with $\frac{1}{2}\leq \sigma\leq 1$ and $|t|\asymp T$. For a fixed $L$-function $L(s,\pi)$, the approximate functional equation gives
\begin{equation}\label{approximatefe}
L\left(s,\pi\right) = \sum_n \frac{\lambda_\pi(n)}{n^{s}} V_1\left(\frac{n}{Z}\right) + \varepsilon(\pi,t) \sum_n \frac{\overline{\lambda_\pi(n)}}{n^{1-s}} V_2\left(\frac{nZ}{\mathcal C_\pi(t)}\right),\end{equation}
where $\mathcal C_\pi(t)$ is the analytic conductor, $Z$ is a free parameter, and $V_1,V_2$ are smooth rapidly decaying weights; see \cite[Theorem~5.3]{IK}. The rapid decay of the weights shows schematically that
\[ L \left( s, \pi \right) \approx \sum_{n \ll Z} \frac{\lambda_\pi(n)}{n^{s}} + \varepsilon(\pi, t) \sum_{n \ll \mathcal C_\pi(t)/Z} \frac{\overline{\lambda_\pi(n)}}{n^{1-s}}, \qquad |t| \asymp T. \]
The approximate functional equation exhibits the reflection principle: a Dirichlet polynomial of length $Z$ with coefficients arising from $L(s,\pi)$ is associated with a dual polynomial of length $\mathcal C_\pi(t)/Z$. At a zero, the left-hand side of \eqref{approximatefe} vanishes and the two sums cancel exactly.

The polynomials produced by zero detection are mollified versions of those above. This mollifier has length $T^\eta$, where $\eta > 0$ may be taken to be arbitrarily small, so it does not affect the length scale.  Therefore, it suffices to handle Dirichlet polynomials of lengths up to $\mathcal C_\pi(t)^{1/2}$. Since $\mathcal C_\pi(t)\asymp (1+|t|)^2\asymp T^2$ for fixed $\pi$, this length is approximately $T$, as opposed to $T^{1/2}$ for $\zeta(s)$. (Note that $N$ is now bounded by a fixed power of $T$, so harmless $N^{o(1)}$ losses may be absorbed into $\lessapprox$.)

The second analytic difference is that the Ramanujan bound is not known for Maass forms in general: the coefficients are not known to satisfy $|\lambda_\pi(n)|\ll n^{o(1)}$. Before addressing this, we first consider what the argument would give if such a bound were available. Suppose that $|c_n| \leq N^{o(1)}$ for the coefficients of the Dirichlet polynomials arising from zero detection. For the purpose of this sketch, we focus on the critical case for previous estimates, which was $N \asymp T^{2/3}$ and $\sigma = 3/4$ \cite{Ivic, Tang}. We also suppress smoothing. Zero detection reduces the problem to bounding the frequency of large values of a Dirichlet polynomial
\[ D(t) = \sum_{n \sim N} c_n n^{it} \]
on a $1$-separated set $W$. At $\sigma=3/4$, Montgomery's conjecture \eqref{montgomery-conjectural} predicts
\[ |W| \lessapprox N^{1/2}, \]
and the classical large-values bound gives
\[ |W| \lessapprox N^{1/2}+T N^{-1/2}.\]
The limiting case occurs at $N\asymp T^{2/3}$, which implies $|W|\lessapprox T^{2/3}$. As a consequence of Theorem 1.1 in \cite{GM}, which requires $1$-bounded coefficients, we can replace this by
\[|W| \lessapprox N^{1/2}+N^{3/5}+T N^{-3/5},\]
and hence, at the same length $N\asymp T^{2/3}$, we have $|W|\lessapprox T^{3/5}.$ Thus, the gain in the zero density estimate is already visible at the critical case.

In the holomorphic case, Deligne's bound \cite{Deligne} gives essentially $1$-bounded pointwise control of the coefficients. The same bound is available for dihedral Maass forms through their realization as automorphic inductions of unitary Hecke characters of quadratic fields. In either case, the large-values theorem of Guth--Maynard can be applied directly using Heath-Brown's zero detection framework \cite{HB} to handle the larger Dirichlet polynomial lengths. This gives the estimate 
\[ N_\pi(\sigma,T)\ll T^{\frac{5}{2}(1-\sigma)+o(1)}. \]

For non-dihedral Maass forms, the pointwise Ramanujan bound is, in general, not known. We use instead the available moment estimates, which provide an averaged substitute: they imply that the coefficients can be large only on a sparse set. In particular, the relevant moment estimates are
\begin{equation}\label{symsquare} \sum_{n\sim N}|\lambda_\pi(n)|^2\ll N, \qquad \sum_{n\sim N}|\lambda_\pi(n)|^4\ll N^{1+o(1)},
\end{equation}
which are available due to Rankin--Selberg theory via Jacquet--Piatetski-Shapiro--Shalika \cite{JPSS} and the symmetric-square functoriality theorem of Gelbart--Jacquet \cite{GJ}. We show that the corresponding moment control can be transferred to the coefficients $c_n$ of the resulting Dirichlet polynomial in the zero detection argument.

Our strategy is to decompose the coefficients according to size by introducing dyadic index sets
\[ \mathcal{I}_B = \{ n \sim N: B < |c_n| \leq 2B \}. \]
Writing $\mathbf c_B=(c_n\mathbbm 1_{\mathcal I_B}(n))_{n\sim N}$, the fourth-moment estimate gives
\begin{equation}\label{cB} \|\mathbf c_B\|_2^2 = \sum_{n\in\mathcal I_B}|c_n|^2 \leq B^{-2}\sum_{n\sim N}|c_n|^4 \ll N^{1+o(1)}B^{-2}.
\end{equation}
Equivalently, large coefficients can occur only on a sparse set: $|\mathcal I_B|\ll N^{1+o(1)}B^{-4}.$

We ask how this sparsity interacts with the Guth--Maynard argument. After a dyadic decomposition, we may fix $B$ and a subset $W_B \subseteq W$ such that $|W| \lessapprox |W_B|$ and the Dirichlet polynomial
\[ D_B(t) := \sum_{n \in \mathcal I_B} c_n n^{it} \]
satisfies $|D_B(t)| \gtrapprox N^\sigma$ for all $t \in W_B$. Introduce the $|W_B| \times N$ matrix $M$ with entries $M_{t, n} = n^{it}$ for $t \in W_B$ and $n \sim N$. Then, we can write
\[ D_B(t) = (M \mathbf{c}_B)_t. \] Hence,
\[ N^{3/2} |W_B| = N^{2 \sigma} |W_B| \lessapprox \sum_{t \in W_B} |D_B(t)|^2 \leq \Vert M \Vert^2 \Vert \mathbf{c}_B \Vert_2^2 \]
and from \eqref{cB}, we have 
\[ N^{3/2} |W_B| \lessapprox \Vert M \Vert^2 N^{1 + o(1)} B^{-2} \Rightarrow|W_B| \lessapprox \Vert M \Vert^2 N^{-1/2} B^{-2}. \]
Thus, due to the sparsity of large coefficients, the dyadic decomposition already gives an initial saving of $B^{-2}$ at the linear-algebra stage.

On the block $\mathcal I_B$, write $\widetilde{c}_n=B^{-1} c_n$, so that $|\widetilde{c}_n|\ll 1$.  If $D_B(t)$ is large, then the normalized polynomial $\sum_{n\in\mathcal I_B} \widetilde{c}_n n^{it}$ takes values of size $N^{\sigma}/B$.  This creates a key tradeoff: the sparse support $\mathcal I_B$ gives a saving, but the large-value threshold is smaller after normalization, which introduces a competing loss.

We note that $\Vert M \Vert = s_1(M)$, where $s_1(M)$ is the largest singular value of $M$. Guth and Maynard bound $s_1(M)$ through a cubic trace estimate.  After expanding $\tr((MM^*)^3)$ and applying Poisson summation, the off-diagonal contribution is decomposed as $S_1+S_2+S_3$, where $S_j$ is the contribution from frequencies $\mathbf m=(m_1,m_2,m_3)\in\ZZ^3\setminus\{0\}$ with exactly $j$ nonzero coordinates.

These three terms behave differently with respect to the sparse support $\mathcal I_B$.  The terms $S_1$ and $S_2$ are both insensitive to the structure of the coefficients. The term $S_1$ is negligible, while $S_2$ is controlled through Heath-Brown's difference-set estimate. The term $S_3$ is the genuinely cubic part of the argument. As in \cite{GM}, we keep track of the oscillatory integrals after Poisson summation and study cancellation in
\[ R(x) :=\sum_{t\in W_B}x^{it}. \]
The resulting estimates are governed by the additive energy of $W_B$. Unlike $S_1$ and $S_2$, this part of the proof is sensitive to the lower threshold $N^\sigma/B$ for the normalized polynomial, and positive powers of $B$ enter the contribution from $S_3$. Fortunately, the loss is at most $B^{2/3}$, and our initial saving of $B^{-2}$ is sufficient to overcome this. Hence, the bound for $|W_B|$ is decreasing in $B$, and the worst case is $B \asymp 1$. We then recover the bounded-coefficient bound
\[ N_\pi(\sigma,T)\ll T^{\frac{5}{2}(1-\sigma)+o(1)}, \qquad 7/10\le \sigma\le 4/5\]
for non-dihedral Maass forms. The critical length in the argument shifts to $N = T^{5/8}$ from $T^{2/3}$.

Establishing our zero density estimate requires only limited use of the sparsity of the sets $\mathcal I_B$. Indeed, in Theorem \ref{mainautomorphic}, we have already reached the exponent available under the Ramanujan bound. It is nevertheless natural to ask whether sparsity can be exploited more within the Guth--Maynard argument to reduce the moment information required in Theorem \ref{momentGM}. We discuss this further in Remark~\ref{longrmk}.

\section{Automorphic $L$-Functions}\label{automorphicprop}

We collect here the relevant facts about automorphic $L$-functions; see, for example, \cite{Gelbart}, \cite{GodementJacquet} and \cite[Sections~5.11 and~5.12]{IK}.

Given cuspidal automorphic representations $\Pi_j$ of $\GL_{n_j}(\AAA_{\QQ})$ for $1\leq j\leq r$, we write $\Pi_1\boxplus\cdots\boxplus\Pi_r$ for their isobaric sum, an automorphic representation of $\GL_n(\AAA_{\QQ})$, where $n=n_1+\cdots+n_r$. The corresponding standard $L$-function factors as
\[ L(s,\Pi_1\boxplus\cdots\boxplus\Pi_r) = \prod_{j=1}^r L(s,\Pi_j). \]

We use the standard normalization with respect to unitary twists. For a cuspidal unitary automorphic representation $\pi$, let $\omega_\pi$ denote its central character. For $\tau\in\RR$, write $\pi(\tau):=\pi\otimes|\det|^{i\tau}$ so that
\[ L(s,\pi(\tau))=L(s+i\tau,\pi), \qquad \omega_{\pi(\tau)}=\omega_\pi|\cdot|^{2i\tau}.\]
Let $\mathcal{F}$ denote the set of cuspidal unitary automorphic representations of $\GL_2(\AAA_{\QQ})$ whose central characters are trivial on the diagonally embedded positive reals. Every cuspidal unitary automorphic representation is a unique unitary twist of an element of $\mathcal{F}$.

\subsection{Local factors and the functional equation}

Let $\pi \in \mathcal{F}$ be fixed. We use the standard normalization of the $L$-function, in which the functional equation is centered at $s=\frac{1}{2}$, and write \[ L(s, \pi) = \sum_{n=1}^\infty \lambda_\pi(n) n^{-s} \qquad (\Real s > 1), \] where $\lambda_\pi(n)$ denotes the $n$th Dirichlet coefficient of $L(s, \pi)$. As $\pi$ is cuspidal, $L(s, \pi)$ is entire. All asymptotic notation in this section will be understood with $\pi$ fixed. First, note that $\pi$ factors as a restricted tensor product 
\[ \pi = \pi_\infty \otimes \left( \bigotimes_p' \pi_p \right)\]
where $\pi_\infty$ is a smooth admissible representation of $\GL_2(\RR)$ and $\pi_p$ is a smooth admissible representation $\GL_2(\QQ_p)$ for every prime $p$. Moreover, $\pi_p$ is unramified for all but finitely many $p$. For every prime $p$, the local $L$-function at $p$ may be written as
\[ L(s, \pi_p) := (1 - \alpha_{1,\pi}(p) p^{-s})^{-1} (1 - \alpha_{2,\pi}(p) p^{-s})^{-1} \]
where we refer to the parameters $\alpha_{j, \pi}(p)$ as the Satake parameters, allowing some of them to be zero at ramified primes. For $j \in \{ 1, 2 \}$, they satisfy 
\begin{equation}\label{satakebound} |\alpha_{j, \pi}(p)| \leq p^{7/64} \end{equation}
by \cite[Appendix 2]{Kim}. In turn, $L(s, \pi)$ has an absolutely convergent Euler product 
\[ L(s, \pi) := \prod_p L(s, \pi_p) = \prod_p (1 - \alpha_{1,\pi}(p) p^{-s})^{-1} (1 - \alpha_{2,\pi}(p) p^{-s})^{-1} \qquad (\Real(s) > 1).\]
Moreover, there exist complex Langlands parameters $(\kappa_{j, \pi})_{j=1}^2$ which form the local $L$-function
\[ L(s, \pi_\infty) := \pi^{-s} \Gamma \left( \frac{s + \kappa_{1, \pi}}{2} \right) \Gamma \left( \frac{s + \kappa_{2, \pi}}{2} \right). \]

Let $q_\pi \in \NN$ denote the arithmetic conductor of $\pi$, which satisfies $\alpha_{1,\pi}(p) \alpha_{2,\pi}(p) \neq 0$ for $p \nmid q_\pi$. The completed $L$-function 
\[ \Lambda(s, \pi) = q_\pi^{s/2} L(s, \pi_\infty) L(s, \pi) \]
is entire of order $1$.

Let $\widetilde{\pi} \in \mathcal{F}$ be the dual of $\pi$, which means that $\widetilde{\pi}$ satisfies
\[ \{ \alpha_{j, \widetilde{\pi}}(p): 1 \leq j \leq 2 \} =  \{ \overline{\alpha_{j, \pi}(p)}: 1 \leq j \leq 2 \} , \quad \{ \kappa_{j, \widetilde{\pi}}: 1 \leq j \leq 2 \} = \{ \overline{\kappa_{j, \pi}} : 1 \leq j \leq 2 \}, \quad q_{\widetilde{\pi}} = q_\pi. \]
Then, there exists a root number $\varepsilon(\pi)$ of modulus $1$ such that the functional equation
\[ \Lambda(s, \pi) = \varepsilon(\pi) \Lambda(1-s, \widetilde{\pi}) \]
holds for all $s \in \CC$. The nontrivial zeros of $L(s, \pi)$ are the zeros of $\Lambda(s, \pi)$, and they lie in the critical strip $0 \leq \Real(s) \leq 1$.

\subsection{Archimedean parameters and the analytic conductor}\label{param} 
The archimedean parameters take explicit forms for $\pi \in \mathcal{F}$. For a convenient reference, see \cite{HumphriesAutomorphicNotes}. Since $\omega_\pi$ is trivial on the diagonally embedded positive reals, it is the idelic lift of a Dirichlet character. By the classical correspondence, $\pi$ either arises from a holomorphic modular form of weight $k \geq 2$ or a Hecke--Maass newform of weight $0$ or $1$.\footnote{Here, the weight-$1$ Hecke--Maass case includes forms $f(z) = y^{1/2} F(z)$ arising from holomorphic weight-$1$ newforms $F$. Such forms necessarily have odd nebentypus.} At the archimedean place, these correspond to discrete-series representations and principal-series representations, respectively.

If $\pi$ is associated to a holomorphic modular form of weight $k \geq 2$, we have
\[ \kappa_{1, \pi} = \frac{k-1}{2}, \qquad \kappa_{2,\pi} = \frac{k+1}{2}.\]
In particular, $\Real(\kappa_{j, \pi}) \geq \frac{1}{2}$ for each $j \in \{ 1, 2 \}$. Now, suppose that $\pi$ is associated to a Maass cusp form. Let $k_\pi \in \{0,1\}$ denote its weight and let $\lambda=1/4+t_\pi^2$ be its Laplacian eigenvalue. If $k_\pi = 0$, let $\vartheta\in \{0,1 \}$ denote its parity parameter, with $\vartheta=0$ in the even case and $\vartheta=1$ in the odd case; when $k_\pi =1$, set $\vartheta = 0$. Then, we have
\[ \kappa_{1,\pi}=k_\pi+ (1-k_\pi)\vartheta + it_\pi, \qquad \kappa_{2,\pi}=(1-k_\pi)\vartheta-it_\pi.\]
By the positivity of the Laplacian eigenvalue, either $t_\pi \in\RR$ or $t_\pi \in i(-1/2, 1/2)$. In either case, $\Real(\kappa_{j, \pi}) > -\frac{1}{2}$.

For $s\in\CC$, the analytic conductor at $s$ is
\begin{equation}\label{conductordef} \mathcal{C}(\pi,s) := q_\pi\prod_{j=1}^{2}\left(3+\left|s+\kappa_{j,\pi}\right|\right).
\end{equation}
We will mainly use this conductor on vertical lines. Thus, for $t\in\RR$, we write
\[ \mathcal{C}_\pi(t) := \mathcal{C}(\pi, it) = q_\pi\prod_{j=1}^{2}\left(3+\left|it+\kappa_{j,\pi}\right|\right).\]
Since $\pi$ is fixed,
\begin{equation}\label{conductoreqn} \mathcal{C}_\pi(t)\asymp (1+|t|)^2.
\end{equation}

\subsection{Coefficient moments}

We will need Rankin--Selberg theory to bound the coefficient moments. Let $\pi, \pi' \in \mathcal{F}$, and let $r_{\pi\times\pi'}=1$ if $\pi'\simeq\widetilde{\pi}$, and $0$ otherwise. We define the Rankin--Selberg convolution $L(s, \pi \times \pi')$ as follows. For every prime $p \nmid q_\pi q_{\pi'}$, we define the local $L$-function
\[ L(s, \pi_p \times \pi'_p) := \prod_{j=1}^2 \prod_{j'=1}^2 (1 - \alpha_{j, \pi}(p) \alpha_{j', \pi'}(p) p^{-s})^{-1}. \]
In \cite{JPSS}, Jacquet, Piatetski-Shapiro, and Shalika define local factors at the ramified and archimedean places and a conductor $q_{\pi \times \pi'}$ such that the $L$-function
\[ L(s, \pi \times \pi') := \prod_p L(s, \pi_p \times \pi'_p)  =: \sum_{n=1}^\infty \lambda_{\pi \times \pi'}(n) n^{-s}\]
converges absolutely for $\Real(s) > 1$ and such that the completed $L$-function
\[ \Lambda(s, \pi \times \pi') = (s(1-s))^{r_{\pi \times \pi'}} q_{\pi \times \pi'}^{s/2} L_\infty(s, \pi \times \pi') L(s, \pi \times \pi')\]
is entire of order $1$ and satisfies the functional equation
\[ \Lambda(s, \pi \times \pi') = \varepsilon(\pi \times \pi') \Lambda(1-s, \widetilde{\pi} \times \widetilde{\pi}')\]
for some root number $\varepsilon(\pi \times \pi')$ of modulus $1$. It follows that $L(s,\pi\times\pi')$ is holomorphic except possibly for a pole at $s=1$, and this pole occurs precisely when $\pi'\simeq \widetilde{\pi}$.

Specializing $\pi'=\widetilde{\pi}$ gives the classical Rankin--Selberg square series. After omitting the finitely many Euler factors at primes $p\mid q_\pi$, one has
\[ L^{(q_\pi)}(s,\pi\times\widetilde{\pi}) = \zeta^{(q_\pi)}(2s) \sum_{\substack{n\geq 1\\(n,q_\pi)=1}} |\lambda_\pi(n)|^2 n^{-s}. \]
For $(n,q_\pi)=1$, comparing coefficients gives
\begin{equation}\label{coeffineq} |\lambda_\pi(n)|^2 \leq \sum_{d^2\mid n}|\lambda_\pi(n/d^2)|^2 = \lambda_{\pi\times\widetilde{\pi}}(n).
\end{equation}
Since the coefficients $\lambda_{\pi\times\widetilde{\pi}}(n)$ are nonnegative and $L(s,\pi\times\widetilde{\pi})$ has a simple pole at $s=1$, a standard smoothed Perron argument gives
\begin{equation}\label{2ndmoment} \sum_{\substack{n\leq x}} |\lambda_\pi(n)|^2 \leq \sum_{\substack{n\leq x}} \lambda_{\pi\times\widetilde{\pi}}(n) \ll_\pi x.
\end{equation}
The contribution from the finitely many ramified primes is absorbed into the implied constant.

We now establish the required fourth-moment estimate for the coefficients $\lambda_\pi(n)$. By Gelbart--Jacquet \cite{GJ}, the adjoint lift $\operatorname{Ad}\pi\simeq \operatorname{Sym}^2\pi\otimes\omega_\pi^{-1}$ is automorphic and $L(s,\pi\times\widetilde{\pi}) = L(s,\mathbf 1\boxplus \operatorname{Ad}\pi)$. Let $\Pi=\mathbf 1\boxplus\operatorname{Ad}\pi$. By \eqref{coeffineq}, we then have, for $(n,q_\pi)=1$,
\[ \sum_{\substack{n\leq x\\(n,q_\pi)=1}}|\lambda_\pi(n)|^4 \leq \sum_{\substack{n\leq x\\(n,q_\pi)=1}}|\lambda_{\pi \times \widetilde{\pi}}(n)|^2 = \sum_{\substack{n\leq x\\(n,q_\pi)=1}}|\lambda_\Pi(n)|^2. \]
Since $\operatorname{Ad} \pi$ need not be cuspidal, write $\Pi=\Pi_1\boxplus\cdots\boxplus\Pi_r$ with each $\Pi_j$ cuspidal. Then
\[ L(s,\Pi\times\widetilde{\Pi}) = \prod_{j=1}^r\prod_{j'=1}^r
L(s,\Pi_j\times\widetilde{\Pi}_{j'}),\]
so $L(s,\Pi\times\widetilde{\Pi})$ has at most a pole of finite order at $s=1$, which in general is not simple. Moreover, away from the finitely many ramified primes, the Dirichlet coefficients of $L(s,\Pi\times\widetilde{\Pi})$ dominate $|\lambda_\Pi(n)|^2$. As above, a standard smoothed Perron argument therefore gives
\[ \sum_{\substack{n\leq x \\ (n,q_\pi) = 1}}|\lambda_\Pi(n)|^2 \ll_\pi x(\log x)^{O_\pi(1)} = x^{1+o(1)}.\]
Therefore, we get
\begin{equation}\label{4thmoment}
\sum_{n\leq x}|\lambda_\pi(n)|^4 \ll_\pi x^{1+o(1)}.
\end{equation}
As above, the finitely many ramified Euler factors are absorbed into the
implied constant.

\begin{rmk}
For completeness, suppose that $\pi = \omega_1 \boxplus \omega_2$ is a noncuspidal unitary automorphic representation of $\GL_2(\AAA_{\QQ})$. Each unitary Hecke character $\omega_j$ is of the form $\omega_j = \chi_j |\cdot|^{i \tau_j}$ with $\tau_j \in \RR$ and $\chi_j$ a primitive Dirichlet character, and hence, $L(s, \omega_j) = L(s + i \tau_j, \chi_j)$. It follows that 
\[ N_{\pi}(\sigma, T) \leq N_{\chi_1}(\sigma, T + O_\pi(1)) +  N_{\chi_2}(\sigma, T + O_\pi(1)).\] 
We thus require only a zero density estimate for a Dirichlet $L$-function. From \cite{GM}, we claim
\begin{equation}\label{degoneZDE} N_{\chi}(\sigma,T) \ll T^{30/13(1-\sigma)+o(1)}. \end{equation}
Indeed, since $\chi$ is fixed, the analytic conductor of $L(s,\chi)$ at height $T$ is of the same order as that of $\zeta(s)$. The argument of \cite{GM} does not rely on any special properties of the coefficients of $\zeta(s)$ beyond a pointwise bound, and it applies equally to any fixed Dirichlet $L$-function. Thus, we obtain the exponent $5/2$ uniformly for any irreducible generic unitary automorphic representation of $\GL_2(\AAA_{\QQ})$.\end{rmk}

\section{Admissible degree $2$ $L$-functions}\label{lfunctionsandmethod}

We set up the zero detection argument for degree $2$ $L$-functions with a prescribed coefficient-moment bound. Our $L$-functions will essentially just be automorphic $L$-functions, but it will be convenient for us to formulate the argument under the precise hypotheses it uses.\footnote{Our axioms are modeled on the Selberg class, but we replace the pointwise Ramanujan bound and approximate it with a weaker moment estimate to include Maass forms unconditionally. Conjecturally, the primitive degree $2$ members of the Selberg class are precisely the standard cuspidal $\GL_2$ automorphic $L$-functions; thus, though the present formulation formally defines a larger class, one might expect it to encompass roughly the same objects.} Throughout this section, $q \geq 2$ denotes a fixed moment parameter. We write $Q$ for the arithmetic conductor to avoid confusion with $q$.

\subsection{Admissible degree $2$ $L$-functions with $q$th-moment input}\label{properties}

Let $q \geq 2$ be a real number. We say that \[ L(s) = \sum_{n=1}^\infty a_n n^{-s} \qquad a_n \in \CC, \ a_1 = 1 \] is an admissible $L$-function of degree $2$ with $q$th-moment input if there exists an integer $Q \geq 1$, known as the arithmetic conductor, Satake parameters $\alpha_1(p), \alpha_2(p) \in \CC$ for each prime $p$, and Langlands parameters $\kappa_1, \kappa_2 \in \CC$ satisfying the following conditions.

\begin{enumerate}
    \item \textit{Euler product:} The Dirichlet series and degree $2$ Euler product converge absolutely for $\Real(s) > 1$, and 
    \[ L(s) = \sum_{n = 1}^\infty a_n n^{-s} = \prod_p (1 - \alpha_1(p) p^{-s})^{-1} (1- \alpha_2(p) p^{-s})^{-1}. \]
    To avoid poles in $\Real(s) > 1$, we further require $|\alpha_j(p)| < p$ for all $j \in \{ 1, 2 \}$ and $p$ prime. Moreover, if $p \nmid Q$, we require $\alpha_1(p) \alpha_2(p) \neq 0$.
    \item \textit{Analyticity:} There exists a nonnegative integer $m$ such that $(s-1)^m L(s)$ is an entire function of finite order.
    \item \textit{Functional equation:} $\kappa_1$ and $\kappa_2$ satisfy $\Real(\kappa_j) > -1$ for $j \in \{ 1, 2 \}$\footnote{A common simplifying hypothesis, used for example in \cite[Chapter~5]{IK}, is that the multiset of archimedean parameters is stable under complex conjugation. We do not impose this condition since it is not preserved by unitary twists. Instead, the dual $L$-function is assigned the conjugate archimedean parameters, and compatibility is encoded by the functional equation. } and are such that, after defining
    \[ \Lambda(s) = Q^{s/2} \pi^{-s} \Gamma \left( \frac{s+ \kappa_1}{2} \right) \Gamma \left( \frac{s+ \kappa_2}{2} \right) L(s), \]
    the completed $L$-function $\Lambda(s)$ is holomorphic on $\CC\setminus\{0,1\}$ and satisfies 
    \[ \Lambda(s) = \varepsilon\, \overline{\Lambda(1-\overline{s})} \]
    for some root number $\varepsilon\in\CC$ with $|\varepsilon|=1$. 
    \item \textit{Coefficient $q$th-moment bound:} The coefficients $a_n$ satisfy, as $x \rightarrow \infty$, \begin{equation}\label{qmoment} \sum_{n \leq x} |a_n|^q \ll x^{1+o(1)}.\end{equation}
\end{enumerate}

\begin{rmk} For automorphic $L$-functions, the explicit description of the archimedean parameters gives the stronger bound $\Real(\kappa_j) > -\frac{1}{2}$. For admissible $L$-functions, we follow \cite[Chapter~5]{IK} and assume only $\Real(\kappa_j) > -1$, which ensures that the gamma factors have no poles in $\Real s \geq 1$. This condition does not preclude gamma-factor poles from lying in the critical strip; these poles are necessarily cancelled by corresponding trivial zeros of $L(s)$ due to the holomorphicity of $\Lambda(s)$ on $\CC \setminus \{ 0, 1 \}$. There are only finitely many such zeros, and their contribution to the zero counts considered below is therefore $O_L(1)$.

We do not impose the stronger Selberg-class condition $\Real(\kappa_j) \geq 0$, which would remove such complications. Indeed, in the even weight-zero Maass case, an exceptional eigenvalue $\lambda = \frac{1}{4} - \nu^2$ with $0 < \nu < \frac{1}{2}$ gives archimedean parameters $-\nu$ and $\nu$. Thus, this stronger condition would rule out exceptional Maass forms and, in turn, imply Selberg's eigenvalue conjecture for these forms, which is not known.
\end{rmk}

In Section \ref{automorphicprop}, we have seen that $L(s, \pi)$ for any $\pi \in \mathcal{F}$ satisfies (1) through (4) given $q = 4$. These properties are preserved under unitary twists, so every $L$-function in Theorem \ref{mainautomorphic} is an admissible degree $2$ $L$-function with fourth-moment input. We now prove a zero density estimate for admissible degree $2$ $L$-functions, which will imply Theorem \ref{mainautomorphic}.

\begin{thm}\label{main} Let $L(s)$ be an admissible degree $2$ $L$-function with $q$th-moment input for $q \geq 8/3$. Let $N_L(\sigma, T)$ denote the number of zeros $\rho$ of $L(s)$ with $\Real(\rho) \geq \sigma$ and $|\Ima(\rho)| \leq T$. We have that 
\[ N_L(\sigma, T) \ll T^{\alpha(\sigma)(1- \sigma) + o(1)} \]
with
\begin{equation}\label{alpha} \alpha(\sigma)
= \begin{cases}
\displaystyle \frac{4}{3-2\sigma}, & \frac{1}{2}\leq\sigma\leq\frac{7}{10},\\[6pt]
\displaystyle \frac{5}{2}, & \frac{7}{10}\leq\sigma\leq\frac{4}{5},\\[6pt]
\displaystyle \frac{2}{\sigma}, & \frac{4}{5}\leq\sigma\leq1.
\end{cases} \end{equation}
\end{thm}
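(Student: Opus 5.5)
The plan is to follow Heath-Brown's zero detection framework, adapted to conductor $\asymp T^2$, and feed the resulting large-values problem into Theorem~\ref{momentGM} together with the classical Montgomery mean value estimate.

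\textbf{Step 1: mollify and detect zeros.} Fix a small $\eta>0$ and the mollifier $M_X(s)=\sum_{n\le X}\mu_L(n)n^{-s}$ with $X=T^\eta$, where $\mu_L$ are the coefficients of $1/L(s)$. These are supported on cube-free $n$ and are polynomials in the Satake parameters, so they inherit $q$th-moment control from \eqref{qmoment} via the Euler product. The bound $|\alpha_j(p)|<p$ is too weak to control prime powers pointwise, so this transfer has to go through moment bounds for the prime-power terms rather than pointwise estimates. With this in hand, the convolution coefficients $b_n=\sum_{d\mid n,\,d\le X}\mu_L(d)a_{n/d}$ satisfy $b_1=1$, $b_n=0$ for $1<n\le X$, and $\sum_{n\sim N}|b_n|^q\ll N^{1+o(1)}$ by H\"older and the divisor bound.

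For each zero $\rho=\beta+i\gamma$ with $\beta\ge\sigma$, apply a smoothed Mellin inversion of $L(s)M_X(s)$ with weight $e^{-n/Y}$, then shift the contour to $\Real s=\tfrac12-\beta$. Using the functional equation, convexity, and the fact that the conductor is $\asymp T^2$, the shifted integral is small once $Y=T^{1+\eta}$. Hence every zero either produces a large value $|\sum_{n\sim N}b_n n^{-\rho}e^{-n/Y}|\gg (\log T)^{-1}$ for some dyadic $N\in[X,Y\log^2 Y]$, or produces a large value of the integral term on the critical line. Zeros are then split into $O(\log T)$ classes of $1$-separated ordinates, and partial summation removes the factor $n^{-\beta}$, giving a $1$-separated $W$ on which
\[ \Bigl|\sum_{n\sim N}c_n n^{it}\Bigr|\ge V\gtrapprox N^\sigma, \qquad \sum_{n\sim N}|c_n|^q\lessapprox N. \]
The critical-line term is handled by Heath-Brown's device: raise a polynomial of length $N$ to a power $k$ so that $N^k$ lands in a favourable range, using the $q$-moment for convolution powers, or alternatively use the reflection principle to replace lengths above $T$ by dual lengths $T^2/N$. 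Either way, only lengths $N\le T^{1+o(1)}$ need be treated.

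\textbf{Step 2: bound $|W|$.} For each $N$, take the minimum of the Montgomery mean value bound $|W|\lessapprox (N+T)N^{1-2\sigma}$, which needs only the $\ell^2$ control implied by $q\ge2$, and the bound of Theorem~\ref{momentGM} with $V=N^\sigma$:
\[ |W|\lessapprox N^{2-2\sigma}+N^{18/5-4\sigma}+TN^{12/5-4\sigma}. \]
Powers $N^k$ may also be used; since $(\sum c_n n^{it})^k$ has coefficients $c*\cdots*c$, which satisfy a $q$-moment bound up to $N^{o(1)}$ by H\"older and the divisor bound, Theorem~\ref{momentGM} applies to these as well.

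\textbf{Step 3: optimize over $N\le T$.} Sketching the three ranges:
\begin{itemize}
\item For $\sigma\ge 4/5$, short $N$ give $T^{2(1-\sigma)/\sigma}$ after taking a suitable power into length about $T^{1/\sigma}$.
\item For $7/10\le\sigma\le 4/5$, the Guth--Maynard terms give the worst case at $N=T^{5/8}$, yielding $T^{5(1-\sigma)/2}$.
\item For $\sigma\le 7/10$, the mean value theorem with the reflection to $N\le T$ gives $T^{4(1-\sigma)/(3-2\sigma)}$. This is checked by balancing $TN^{1-2\sigma}$ against Ivi\'c-type fourth-moment or Huxley bounds, which survive under the $\ell^2$/$\ell^q$ input.
\end{itemize}

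\textbf{Main obstacle.} I expect the hardest step to be verifying that every length $N$ up to $T^{1+o(1)}$, including those arising after raising to powers and those from the integral term, falls in a range where one of the two bounds beats the claimed exponent. This requires a careful case analysis in $(N,\sigma)$ similar to the paper's $N_0,N_1,N_2$ sections. The delicate point is near $N\approx T^{5/8}$, together with confirming that the transferred $q$-moment bounds for the mollified coefficients hold uniformly.
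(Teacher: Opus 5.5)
There is a genuine gap: the critical-line (Type~II) term is never bounded. Your Type~I analysis matches the paper in substance: the mollified coefficients keep the $q$th-moment bound, powers of the polynomial push short lengths upward, and the mean value theorem for $D$ and $D^2$ is played against Theorem~\ref{momentGM}, with critical length $N=T^{5/8}$. Fixing $Y\approx T$ and using powers, instead of the paper's $\sigma$-dependent $Y$ with Jutila's trick, is a harmless variation.

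The gap itself is as follows. Your claim that the shifted integral is small once $Y=T^{1+\eta}$ is false. On $\Real w=\tfrac12-\beta$ the only available pointwise bound is $Y^{1/2-\beta}|L(\tfrac12+iu)||M_X(\tfrac12+iu)|\ll Y^{1/2-\beta}T^{1/2}X^{1/2+o(1)}$. With your $X$ and $Y$ this equals $T^{(1+\eta)(1-\beta)+o(1)}$, which is not small for any $\beta<1$. The devices you then invoke do not repair this. Raising a Dirichlet polynomial to a power and the reflection principle act on the Type~I sum. The Type~II term is instead an integral of $L(s)M_X(s)$ along the critical line, and neither device bounds how many zeros make it at least $\frac14$.

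The missing ingredient is the paper's Proposition~\ref{n2}. Each such zero forces $\int_{|u-\gamma|\le C\log T}|L(\tfrac12+iu)|^2\,du\gg X^{-2}Y^{2\sigma-1}(\log T)^{-3}$. Summing over these zeros with the local zero count of Lemma~\ref{standardzerocount}, and using the mean square $\int_{-2T}^{2T}|L(\tfrac12+iu)|^2\,du\lessapprox T$ (Lemma~\ref{secondmoment}), gives $N_2\lessapprox X^2Y^{1-2\sigma}T$. Lemma~\ref{secondmoment} comes from the approximate functional equation and uses only the $\ell^2$ moment. With $Y\approx T$ this is $T^{2-2\sigma+O(\eta)}$, which is acceptable because $\alpha(\sigma)\ge 2$ throughout. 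This is exactly the bound the paper trades against the Type~I estimates when it chooses $Y$.

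There are also two smaller omissions.

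\textbf{Huxley's bound for $\sigma>4/5$.} Your Step~2 uses only the mean value theorem and Theorem~\ref{momentGM}. For $\sigma>4/5$ the term $TN^{12/5}V^{-4}$ in Theorem~\ref{momentGM} is worse than the $TN^{4}V^{-6}$ of Montgomery--Hal\'asz--Huxley. Those two bounds, even with powers, fall short of $2/\sigma$; at $\sigma=9/10$ they give only $\alpha=5/2$. You need \eqref{huxley}, which holds under $\ell^2$ control, in the last range; your appeal to lengths near $T^{1/\sigma}$ already assumes it.

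\textbf{Residue term.} If $L$ has a pole at $s=1$, the contour shift produces a residue term. You must show it accounts for only $O((\log T)^2)$ zeros, as in Lemma~\ref{n0}.

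Finally, the fourth-moment input you mention for $\sigma\le 7/10$ is neither available in this generality nor needed. The mean value theorem applied to suitable powers of $D$ already gives the exponent $4/(3-2\sigma)$.
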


Compared with \eqref{ivicestimate}, the new feature of Theorem \ref{main} is the exponent $5/2$ in the middle range $\frac{7}{10} \leq \sigma \leq \frac{4}{5}$; the classical estimates are retained in the remaining ranges. Moreover, the exponent $\alpha(\sigma)$ is independent of $q$ for $q \geq 8/3$; in particular, $q = 4$ suffices to obtain Theorem \ref{mainautomorphic}.

\subsection{Standard consequences}
We record several consequences of the preceding hypotheses that will be used repeatedly throughout the zero detection argument.

\subsubsection{The analytic conductor} An identical argument to that of Section \ref{param} shows that the analytic conductor $\mathcal{C}_L(t)$, defined identically to \eqref{conductordef} for a more general $L(s)$, is $\asymp_L (1 + |t|)^2$ for all such degree $2$ $L$-functions, where the dependence on $L$ includes the fixed arithmetic conductor $Q$.

\subsubsection{Polynomial growth and zero counting} We first establish polynomial growth in fixed vertical strips and the corresponding local zero-counting estimates.

\begin{lem}\label{polygrowth}
Let $L(s)$ be an admissible degree $2$ $L$-function, and let $m\geq 0$ denote the order of its pole at $s=1$. Then, $(s-1)^mL(s)$ has polynomial growth in every fixed vertical strip. In particular, we have
\begin{equation}\label{halfbound} L\left(\frac{1}{2}+it\right) \ll (1+|t|)^{1/2}. \end{equation}
\end{lem}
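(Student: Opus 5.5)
The plan is the standard Phragmén--Lindelöf convexity argument. Set $F(s) := (s-1)^m L(s)$, which is entire of finite order by axiom (2). First we bound $F$ on a line to the right of the critical strip. For $\Real(s) = 1+\delta$ with fixed $\delta>0$, the Dirichlet series and Euler product converge absolutely by axiom (1), so $|L(s)| \leq \sum_n |a_n| n^{-1-\delta} \ll_\delta 1$. Here we use that absolute convergence for $\Real s>1$ is part of the hypothesis. It also follows from the $q$th-moment bound \eqref{qmoment} via Hölder and partial summation, since $\sum_{n\le x}|a_n| \ll x^{1+o(1)}$. Hence $|F(s)| \ll (1+|t|)^m$ on $\Real s = 1+\delta$.

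Next we reflect to the line $\Real s = -\delta$ using axiom (3). This gives
\[ L(s) = \varepsilon\, Q^{\frac{1}{2}-s}\pi^{2s-1}\prod_{j=1}^2 \frac{\Gamma\bigl(\frac{1-s+\overline{\kappa_j}}{2}\bigr)}{\Gamma\bigl(\frac{s+\kappa_j}{2}\bigr)}\, \overline{L(1-\overline{s})}. \]
On $\Real s=-\delta$ the factor $\overline{L(1-\overline s)}$ is evaluated on $\Real = 1+\delta$ and is bounded. Stirling's formula gives each gamma ratio as $\asymp (1+|t+\Ima\kappa_j|)^{\frac12+\delta}$, so the product is $\ll (1+|t|)^{1+2\delta}$, in agreement with $\mathcal C_L(t)^{1/2+\delta}$. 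Stirling applies here because $\Real(\kappa_j)>-1$ keeps the arguments $(1-s+\overline{\kappa_j})/2$ away from the poles for $|t|$ large. For bounded $|t|$ we use continuity of $F$, after choosing $\delta$ small enough to avoid the finitely many gamma poles, or simply taking $|t|\ge t_0$. Thus $|F(s)|\ll (1+|t|)^{m+1+2\delta}$ on $\Real s=-\delta$.

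We then apply Phragmén--Lindelöf in the strip $-\delta\le \Real s\le 1+\delta$. This needs an a priori bound $F(s)\ll \exp(|t|^A)$ inside the strip, which follows from $F$ being entire of finite order. We conclude that $F$ has polynomial growth in this strip. Outside it, growth is polynomial already by absolute convergence on the right and by the functional equation with Stirling on the left, so $F$ has polynomial growth in every fixed vertical strip.

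For \eqref{halfbound}, convexity interpolates linearly between exponent $0$ at $\Real s = 1+\delta$ and $1+2\delta$ at $\Real s = -\delta$, after dividing $F$ by $(s-1)^m$, or by treating the factor $(s-1)^m$ separately. At $\sigma=\tfrac12$ this gives exponent $\tfrac12+\delta$. To remove the $\delta$ exactly, we take $\delta \asymp 1/\log(3+|t|)$ in the standard way, in which case the bounds on the edge lines cost $\log$ factors. Alternatively, we accept $(1+|t|)^{1/2+o(1)}$ if that suffices downstream. The main technical point I expect is exactly this sharpening to the clean exponent $1/2$: the edge bound $\sum |a_n| n^{-1-\delta} \ll \delta^{-1-o(1)}$, obtained from the moment bound, must be combined with Phragmén--Lindelöf in a form that tolerates $\log$ losses. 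The cleanest route is to apply the maximum principle to $F(s)\,\bigl(\mathcal C_L\text{-type factor}\bigr)^{(s-1-\delta)/2}$, which gives $L(\tfrac12+it)\ll (1+|t|)^{1/2}\delta^{-O(1)}$, and then to absorb the $\delta^{-O(1)}$ by fixing $\delta$ small and using $|\Gamma|$ ratios exactly. If this fails to produce the exact exponent, the fallback is the $o(1)$-loss version.
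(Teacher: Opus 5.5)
Your Phragm\'en--Lindel\"of argument for the polynomial growth of $(s-1)^mL(s)$ is correct and is the one the paper uses, via \cite[Lemma~5.2]{IK}. It gives $L(\frac12+it)\ll_\delta(1+|t|)^{1/2+\delta}$ for every fixed $\delta>0$. The gap is the sharp bound \eqref{halfbound}, which is what the lemma asserts and which your proposal does not establish.

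Interpolating pointwise bounds on $\Real s=-\delta$ and $\Real s=1+\delta$ always costs a factor $(1+|t|)^{\delta}$ at $\Real s=\frac12$. It also costs a constant $B(\delta)\geq\sup_u|L(1+\delta+iu)|$ that blows up as $\delta\to0$; for $L=\zeta^2$, for example, $B(\delta)\asymp\delta^{-2}$. Your ``cleanest route'' is miscomputed. The factor $\mathcal C_L^{(s-1-\delta)/2}$ has size about $(1+|t|)^{-1/2-\delta}$ on $\Real s=\frac12$, so the maximum principle gives $(1+|t|)^{1/2+\delta}\delta^{-O(1)}$, not $(1+|t|)^{1/2}\delta^{-O(1)}$. ``Fixing $\delta$ small'' then leaves the factor $(1+|t|)^{\delta}$. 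Taking $\delta\asymp 1/\log(3+|t|)$ instead leaves a loss of $B(\delta)$, which is a power of $\log$ at best. Even that requires $B(\delta)\ll\delta^{-O(1)}$, and this does not follow from the unquantified $x^{1+o(1)}$ in \eqref{qmoment}: your claimed $\delta^{-1-o(1)}$ hides an implied constant whose dependence on $\delta$ is uncontrolled.

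The paper removes the loss by invoking Heath-Brown's convexity argument \cite{HBconvexity}. What is needed is an input that avoids the pointwise size of $L$ near $\Real s=1$. One way to do this under the present axioms is as follows.

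Apply the Poisson-kernel majorant for the strip $-\delta\le\Real s\le 1+\delta$ to the subharmonic function $v=\log|(s-1)^mL(s)(s+2)^{-m}|$ at $s=\frac12+it$. With $w=1+2\delta$, each edge kernel is $P(y)=\bigl(2w\cosh(\pi(y-t)/w)\bigr)^{-1}$. It has mass $\frac12$ and satisfies $|\widehat P(\xi)|\le e^{-w|\xi|/2}$.

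For the right edge, expand $\log L(1+\delta+iy)=\sum_{p,k}k^{-1}(\alpha_1(p)^k+\alpha_2(p)^k)p^{-k(1+\delta+iy)}$. The right edge then contributes at most $\sum_{p,k}(|\alpha_1(p)|^k+|\alpha_2(p)|^k)p^{-3k/2}$, uniformly in $\delta$. This sum is finite because $|\alpha_j(p)|<p$, and because $\sum_p|\alpha_j(p)|p^{-3/2}<\infty$ by absolute convergence of the Euler product.

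For the left edge, the functional equation and Stirling's formula, using $\Real\kappa_j>-1$, show that it contributes $(\frac12+\delta)\log(3+|t|)+O(1)$, again uniformly in $\delta$. Letting $\delta\to0$ gives \eqref{halfbound}.

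Your $(1+|t|)^{1/2+o(1)}$ fallback would in fact suffice for every later use of the lemma in the paper, namely Jensen's formula, the approximate functional equation, and the tail bound in the proof of Proposition~\ref{n2}. It is not, however, the statement being proved.
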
 

\begin{proof}
The Phragm\'en--Lindel\"of argument in \cite[Lemma~5.2]{IK} applies to \mbox{$(s-1)^mL(s)$}: absolute convergence and the functional equation together with Stirling's formula give polynomial growth on the boundary of a sufficiently wide strip, while finite order allows interpolation across it. The argument for Heath-Brown's convexity bound \cite{HBconvexity} removes the $\varepsilon$ loss on the critical line.\footnote{For standard automorphic $\GL_2$ $L$-functions, subconvexity bounds are known; see \cite{MichelVenkatesh}. No subconvexity input is needed here.}
\end{proof}

\begin{lem}\label{standardzerocount}
Let $L(s)$ be an admissible degree $2$ $L$-function. For $T\geq2$ and every interval $I\subset[-2T,2T]$, we have
\begin{equation}\label{localzerocount}
\#\{\rho:0\leq\Real(\rho)\leq1,\ \Ima(\rho)\in I\} \ll (|I|+1)\log T,
\end{equation}
where zeros are counted with multiplicity. In particular,
\begin{equation}\label{zerocount} \#\{\rho:0\leq\Real(\rho)\leq1,\ |\Ima(\rho)|\leq T\} \ll T\log T.
\end{equation}
Moreover, every zero $\rho$ in the critical strip with $|\Ima(\rho)|\leq T$ has multiplicity $O(\log T)$.
\end{lem}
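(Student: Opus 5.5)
The plan is the classical Jensen/Littlewood argument, applied to an auxiliary function that is holomorphic and nonvanishing at a suitable centre. We may assume $|I|\le 1$, since a longer interval is covered by $O(|I|+1)$ unit intervals. Let $I$ have midpoint $t_0$ with $|t_0|\le 2T$. Put $s_0 = 2 + it_0$ and
\[ F(s) = (s-1)^m L(s). \]

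First we need a lower bound at the centre. Absolute convergence of the Euler product at $\Real s = 2$ gives $|L(s_0)| \gg 1$. Indeed, $\log L(s_0) = \sum_p \sum_k (\alpha_1(p)^k+\alpha_2(p)^k)/(k p^{2k})$. To bound this we cannot use the Ramanujan bound. Instead, note that absolute convergence of $\sum a_n n^{-s}$ for $\Real s>1$ forces $|\alpha_j(p)|\le p^{1-\delta}$ for some uniform $\delta>0$. Here is how. The Euler factor at $p$ has coefficients $a_{p^k}=\sum_{i+j=k}\alpha_1^i\alpha_2^j$, and their growth is governed by $\max_j|\alpha_j(p)|$. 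So the convergence of $\sum_k |a_{p^k}| p^{-k\sigma}$ for all $\sigma>1$ gives $|\alpha_j(p)|\le p$. Uniformly, the moment hypothesis \eqref{qmoment} gives $|a_{p^k}|\ll p^{k(1+o(1))/q}$, and hence $|\alpha_j(p)|\le p^{1/q+o(1)}\le p^{1/2+o(1)}$. The series for $\log L(s_0)$ then converges absolutely and is uniformly bounded. Also $|s_0-1|\ge 1$, so $|F(s_0)|\gg 1$ uniformly in $t_0$.

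Next we need an upper bound on a disc. Take the disc $|s-s_0|\le R$ with $R=5/2$, which contains the rectangle $0\le\Real s\le 1$, $|\Ima s - t_0|\le 1/2$ inside the smaller disc of radius $r=\sqrt{4+1/4}<R$. On the larger disc $F$ is holomorphic, and Lemma~\ref{polygrowth} gives $|F(s)|\ll T^{A}$ for some fixed $A$. For the uniformity of this bound we need the strip to be fixed, namely $-1/2\le\Real s\le 9/2$, and $|\Ima s|\le 2T+3$. Jensen's formula then yields
\[ \#\{\rho : |\rho - s_0|\le r\} \le \frac{1}{\log(R/r)} \log \frac{\max_{|s-s_0|=R}|F(s)|}{|F(s_0)|} \ll \log T. \]
The zeros of $F$ in the critical strip are exactly those of $L$ there, with the extra factor $(s-1)^m$ contributing only $m=O(1)$ zeros at $s=1$, so these are harmless. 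This proves \eqref{localzerocount}.

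The bound \eqref{zerocount} follows by taking $I=[-T,T]$. The multiplicity statement follows from \eqref{localzerocount} with $|I|=1$, since a zero of multiplicity $\mu$ is counted $\mu$ times.

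The main obstacle is the uniform lower bound $|L(2+it_0)|\gg 1$ without Ramanujan. If the Satake bound extracted above from the moment hypothesis turns out to be delicate, the fallback is to move the centre to $s_0=\sigma_1+it_0$ with $\sigma_1$ large but fixed. Then $\log L(s_0)$ is dominated by small primes, and the admissibility condition $|\alpha_j(p)|<p$, applied to finitely many primes, together with the $q$-moment bound for large $p$, gives uniform nonvanishing. We then enlarge $R$ correspondingly, which only changes the constants.
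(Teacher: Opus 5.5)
Your proposal is correct and follows the paper's route: Jensen's formula on a disc about $2+it_0$ for the entire function $(s-1)^mL(s)$, with the boundary bound supplied by Lemma~\ref{polygrowth}, which is what the paper's citation of \cite[(5.27)]{IK} abbreviates. The paper leaves the uniform lower bound at the centre implicit. You handle it validly via $\max_j|\alpha_j(p)|=\limsup_k|a_{p^k}|^{1/k}\le p^{1/q}$, which comes from the moment hypothesis and not from absolute convergence alone, as your opening phrase suggests; the elementary inequality $|L(2+it)|\ge\prod_p\prod_j(1+|\alpha_j(p)|p^{-2})^{-1}>0$, using $\sum_p|\alpha_j(p)|p^{-2}<\infty$ as in Lemma~\ref{bn-properties}, would already suffice.
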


\begin{proof}
\eqref{localzerocount} and, in turn, \eqref{zerocount} follow from Jensen's formula and Lemma~\ref{polygrowth}; compare to \cite[(5.27)]{IK}. The multiplicity bound follows by applying \eqref{localzerocount} near the ordinate of the zero.
\end{proof}

\subsubsection{Reciprocal coefficients}
The reciprocal coefficients will play a central role in the zero detection argument, where a truncation of $1/L(s)$ is used to construct the mollifier. For $\Real s>1$, define $b_n$ to be the coefficients of the reciprocal $L$-function \begin{equation}\label{bn} \frac{1}{L(s)} =: \sum_{n=1}^\infty b_n n^{-s} \end{equation} where $L(s)$ is an admissible degree $2$ $L$-function with $q$th moment input. The coefficients $b_n$ are multiplicative, and comparison of the local Euler factors gives
\begin{align}\label{descriptionofbn}
b_{p^k} = \begin{cases} -a_p=-(\alpha_1(p)+\alpha_2(p)) &\textrm{ for } k = 1, \\ a_p^2-a_{p^2}=\alpha_1(p)\alpha_2(p) &\textrm{ for } k = 2, \\
0 &\textrm{ for all } k\geq3. \end{cases}
\end{align}

\begin{lem}\label{bn-properties}
Let $b_n$ be defined as in \eqref{bn}. For every fixed $\delta>0$,
\begin{equation}\label{bnabsconv} \sum_{n\geq1}\frac{|b_n|}{n^{1+\delta}} \ll_{\delta}1.
\end{equation}
\end{lem}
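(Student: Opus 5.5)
The plan is to exploit multiplicativity of $b_n$ together with the explicit description \eqref{descriptionofbn}. Since $b_{p^k}=0$ for $k\geq 3$, we have $b_n=0$ unless $n$ is cube-free. Writing such an $n$ uniquely as $n=n_1n_2^2$ with $n_1,n_2$ squarefree and coprime, multiplicativity gives
\[ |b_n| = \prod_{p\mid n_1}|a_p|\prod_{p\mid n_2}|\alpha_1(p)\alpha_2(p)|. \]
By complete positivity it then suffices to bound the Euler product
\[ \sum_{n\geq 1}\frac{|b_n|}{n^{1+\delta}} = \prod_p\Bigl(1+\frac{|a_p|}{p^{1+\delta}}+\frac{|\alpha_1(p)\alpha_2(p)|}{p^{2+2\delta}}\Bigr) \leq \exp\Bigl(\sum_p\frac{|a_p|}{p^{1+\delta}}+\sum_p\frac{|\alpha_1(p)\alpha_2(p)|}{p^{2+2\delta}}\Bigr), \]
so the proof reduces to showing that the two prime sums converge.

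For the first sum, the $q$th-moment bound \eqref{qmoment} is the key input. Hölder's inequality with $q\geq 2$ on a dyadic range gives
\[ \sum_{n\leq x}|a_n|\leq x^{1-1/q}\Bigl(\sum_{n\leq x}|a_n|^q\Bigr)^{1/q}\ll x^{1+o(1)}. \]
Partial summation then yields $\sum_n |a_n| n^{-1-\delta}\ll_\delta 1$, and in particular $\sum_p |a_p|p^{-1-\delta}\ll_\delta 1$.

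For the second sum, observe that $\alpha_1(p)\alpha_2(p)=a_p^2-a_{p^2}$. Hence $|\alpha_1\alpha_2(p)|\leq |a_p|^2+|a_{p^2}|$, and it is enough to bound $\sum_p |a_p|^2p^{-2-2\delta}$ and $\sum_p|a_{p^2}|p^{-2-2\delta}$.
\begin{itemize}
\item[$\bullet$] For the second of these, use $\sum_n|a_n|n^{-1-\delta}<\infty$ restricted to $n=p^2$: this gives $\sum_p |a_{p^2}|p^{-2-2\delta}<\infty$.
\item[$\bullet$] For the first, \eqref{qmoment} with $q\geq2$ gives $\sum_{n\le x}|a_n|^2\ll x^{1+o(1)}$ by the same Hölder step. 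Partial summation then gives $\sum_p|a_p|^2p^{-1-\delta}<\infty$, which is much stronger than what is needed.
\end{itemize}
Alternatively, one can use directly that $|\alpha_j(p)|<p$ from axiom (1). However, that alone only gives $|\alpha_1\alpha_2|<p^2$, which is insufficient, so the moment route is the one to follow.

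I expect no serious obstacle; the only care needed is in the Euler product manipulation. The expression for $|b_n|$ in terms of local factors is exact since $b$ is multiplicative, so no convergence issue arises in passing to the product. Working with truncated products $\prod_{p\leq y}$ and letting $y\to\infty$ makes the argument rigorous. The mildly delicate point is making sure the $x^{o(1)}$ loss in \eqref{qmoment} is absorbed by the $n^{-\delta}$ saving in the partial summation, which holds because $x^{o(1)}\ll_\delta x^{\delta/2}$.
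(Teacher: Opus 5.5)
Your proof is correct, but it takes a different route from the paper's. The paper never uses the moment hypothesis here. It reads off from the absolute convergence of the Euler product in axiom (1) that $\sum_p (|\alpha_1(p)|+|\alpha_2(p)|)p^{-1-\delta}<\infty$, and bounds $|b_p|\le|\alpha_1(p)|+|\alpha_2(p)|$. It then controls $|b_{p^2}|=|\alpha_1(p)\alpha_2(p)|\le\frac14(|\alpha_1(p)|+|\alpha_2(p)|)^2$, so the $p^2$-sum is dominated by the square of a convergent series. Multiplicativity finishes exactly as in your Euler-product step.

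You instead work with $a_p$ and $a_{p^2}$ through the identity $\alpha_1(p)\alpha_2(p)=a_p^2-a_{p^2}$, and use H\"older plus partial summation from the $q$th-moment bound. This is essentially the $|b_n|^1$ analogue of the paper's proof of the stronger Lemma~\ref{bn-bound}. The paper's version shows that this lemma depends only on axiom (1), which is why it is stated separately. You set aside the Satake-parameter route because $|\alpha_j(p)|<p$ is too weak, but it is the absolute convergence, not that pointwise bound, that supplies the needed summability.

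Even within your own framework the moment hypothesis is unnecessary. The bound $\sum_n |a_n|n^{-1-\delta}<\infty$ is itself part of axiom (1), namely absolute convergence of the Dirichlet series for $\Real s>1$. Also $\sum_p|a_p|^2p^{-2-2\delta}\le\left(\sum_p|a_p|p^{-1-\delta}\right)^2$. So your H\"older and partial-summation steps can be dropped entirely, and the argument becomes as hypothesis-light as the paper's.
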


\begin{proof}
The absolute convergence of the Euler product implies that, for every fixed $\delta>0$,
\[ \sum_p\frac{|\alpha_1(p)|+|\alpha_2(p)|}{p^{1+\delta}}<\infty \Rightarrow \sum_p\frac{|b_p|}{p^{1+\delta}}<\infty. \]
Moreover,
\[\frac{|b_{p^2}|}{p^{2+2\delta}}= \frac{|\alpha_1(p)\alpha_2(p)|}{p^{2+2\delta}}\leq\frac{1}{4}\left( \frac{|\alpha_1(p)|+|\alpha_2(p)|}{p^{1+\delta}}\right)^2 \Rightarrow\sum_p\frac{|b_{p^2}|}{p^{2+2\delta}}<\infty.\]
Since $b_{p^k}=0$ for $k\geq3$, multiplicativity now gives \eqref{bnabsconv}.
\end{proof}

The following lemma actually establishes a stronger statement from which Lemma \ref{bn-properties} could be recovered. We record Lemma \ref{bn-properties} separately because it follows directly from absolute convergence of the Euler product without invoking the coefficient moment hypothesis.

\begin{lem}\label{bn-bound} With $b_n$ defined as in \eqref{bn}, we have
\[ \sum_{n \sim N} |b_n|^q \ll N^{1 + o(1)}.\]
\end{lem}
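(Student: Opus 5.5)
The plan is to exploit the explicit local description \eqref{descriptionofbn} of $b_n$ and reduce everything to the $q$th-moment hypothesis \eqref{qmoment} for the $a_n$. Since $b_{p^k}=0$ for $k\geq 3$, $b_n$ is supported on cube-free $n$. Each such $n$ factors uniquely as $n=n_1n_2^2$ with $n_1,n_2$ squarefree and coprime. By multiplicativity of both sequences,
\[ b_n=\mu(n_1)a_{n_1}\prod_{p\mid n_2}\bigl(a_p^2-a_{p^2}\bigr). \]
This uses $b_p=-a_p$, the fact that $a_{n_1}=\prod_{p\mid n_1}a_p$ for squarefree $n_1$, and the identity $b_{p^2}=a_p^2-a_{p^2}$. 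That identity comes from comparing Euler factors, so it holds at every prime, ramified or not.

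\textbf{Step 1: expand the product.} By $|x+y|^q\leq 2^{q}(|x|^q+|y|^q)$,
\[ \prod_{p\mid n_2}\bigl|a_p^2-a_{p^2}\bigr|^q\leq 2^{q\,\omega(n_2)}\sum_{uv=n_2}\ \prod_{p\mid u}|a_p|^{2q}\prod_{p\mid v}|a_{p^2}|^q = 2^{q\,\omega(n_2)}\sum_{uv=n_2}|a_u|^{2q}\,|a_{v^2}|^q. \]
Here $u,v$ are coprime and squarefree. Since $n_1,u,v$ are pairwise coprime, multiplicativity gives $|a_{n_1}|^q|a_{v^2}|^q=|a_{n_1v^2}|^q$. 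The factor $2^{q\,\omega(n_2)}$ is $N^{o(1)}$. Writing $m=n_1v^2$, so that $n=mu^2$, and dropping the coprimality and squarefree conditions (all terms are nonnegative), we get
\[ \sum_{n\sim N}|b_n|^q\ll N^{o(1)}\sum_{u\leq \sqrt{2N}}|a_u|^{2q}\sum_{m\leq 2N/u^2}|a_m|^q\ll N^{1+o(1)}\sum_{u\leq\sqrt{2N}}\frac{|a_u|^{2q}}{u^2}, \]
where the last step uses \eqref{qmoment} for the inner sum.

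\textbf{Step 2: control the $u$-sum.} This is the only delicate point. The moment hypothesis controls $|a_u|^q$, not $|a_u|^{2q}$, and the trivial Euler-product bound $|\alpha_j(p)|<p$ is far too weak. The fix is to extract a pointwise bound from the moment hypothesis itself. Taking a single term in \eqref{qmoment} gives $|a_u|^q\leq u^{1+o(1)}$, hence
\[ |a_u|^{2q}\leq u^{1+o(1)}|a_u|^q. \]
Therefore
\[ \sum_{u\leq\sqrt{2N}}\frac{|a_u|^{2q}}{u^2}\ll N^{o(1)}\sum_{u\leq\sqrt{2N}}\frac{|a_u|^q}{u}. \]
By partial summation against \eqref{qmoment}, the right-hand side is $N^{o(1)}$ (at worst a factor $(\log N)\,N^{o(1)}$).

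Combining the two steps gives $\sum_{n\sim N}|b_n|^q\ll N^{1+o(1)}$. The main obstacle is precisely Step 2. A naive regrouping that puts $|a_u|^q$ and its extra factor $u$ into a single moment sum indexed by $mu$ loses $N^{1/2}$. What saves the argument is keeping the square structure $n=mu^2$, which supplies the $u^{-2}$ decay. That decay then absorbs the lossy pointwise bound $|a_u|^q\leq u^{1+o(1)}$.
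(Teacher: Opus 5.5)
Your argument is correct, but it takes a different route from the paper's. The paper proves the stronger weighted statement $\sum_{n\geq1}|b_n|^q n^{-1-\delta}\ll_\delta 1$ for every $\delta>0$ by a Rankin-type argument. It writes this series as the Euler product $\prod_p\bigl(1+|a_p|^q p^{-1-\delta}+|a_p^2-a_{p^2}|^q p^{-2-2\delta}\bigr)$ and checks that the two prime sums converge over dyadic blocks. It handles the $|a_p|^{2q}$ term via $\sum_{p\sim P}|a_p|^{2q}\leq\bigl(\sum_{p\sim P}|a_p|^q\bigr)^2\ll P^{2+o(1)}$.

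You instead work with individual $n=n_1n_2^2$. You expand the local factors at $p\mid n_2$ and dominate the sum by $N^{o(1)}$ times the convolution $\sum_{mu^2\leq 2N}|a_u|^{2q}|a_m|^q$.

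The essential input is the same in both proofs. Your pointwise bound $|a_u|^q\leq u^{1+o(1)}$ is the same $\ell^q$-to-$\ell^{2q}$ comparison the paper uses. The $u^{-2}$ decay from your square structure plays the role of the paper's weight $p^{-2-2\delta}$.

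The paper's route is shorter and avoids the $2^{q\,\omega(n_2)}$ bookkeeping, because constants enter only inside a convergent prime sum. Your route is more elementary: it avoids manipulating Euler products of nonnegative multiplicative functions. It also has the same shape as the H\"older-plus-convolution argument the paper later uses for $C_n$ in Lemma~\ref{Cn-bound}.
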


\begin{proof}
For $\delta$ sufficiently large, absolute convergence and the multiplicativity of $b_n$ give
\begin{equation}\label{bnq} \sum_{n=1}^\infty \frac{|b_n|^q}{n^{1+\delta}} = \prod_p \left( 1+\frac{|a_p|^q}{p^{1+\delta}} +\frac{|a_p^2-a_{p^2}|^q}{p^{2+2\delta}} \right),\end{equation}
by \eqref{descriptionofbn}. We show that the Euler product converges for any $\delta>0$. From $1+x\leq e^x$, convergence of
\[\sum_p\frac{|a_p|^q}{p^{1+\delta}}+\sum_p\frac{|a_p^2-a_{p^2}|^q}{p^{2+2\delta}}\]
implies convergence of the Euler product, with
\[\prod_p\left(1+\frac{|a_p|^q}{p^{1+\delta}}+\frac{|a_p^2-a_{p^2}|^q}{p^{2+2\delta}}\right)\leq\exp\left(\sum_p\frac{|a_p|^q}{p^{1+\delta}}+\sum_p\frac{|a_p^2-a_{p^2}|^q}{p^{2+2\delta}}\right).\]
It therefore suffices to show that the two prime sums converge. For the first sum, summing over dyadic $P$ and using \eqref{qmoment}, we have
\[\sum_p\frac{|a_p|^q}{p^{1+\delta}} \ll \sum_{P\ {\rm dyadic}} P^{-1-\delta}\sum_{p \sim P}|a_p|^q \ll \sum_{P\ {\rm dyadic}}P^{-\delta+o(1)} \ll_\delta 1.\]
For the second sum, using $|a_p^2-a_{p^2}|^q\ll |a_p|^{2q}+|a_{p^2}|^q$,
we similarly obtain
\[\sum_p\frac{|a_p|^{2q}}{p^{2+2\delta}}\ll\sum_{P\ {\rm dyadic}} P^{-2-2\delta} \left(\sum_{p \sim P}|a_p|^q\right)^2 \ll \sum_{P\ {\rm dyadic}}P^{-2\delta+o(1)} \ll_\delta1, \]
while
\[\sum_p\frac{|a_{p^2}|^q}{p^{2+2\delta}} \ll \sum_{P\ {\rm dyadic}} P^{-2-2\delta} \sum_{P^2<n\le4P^2}|a_n|^q \ll \sum_{P\ {\rm dyadic}}P^{-2\delta+o(1)} \ll_\delta1. \]
Thus, the Euler product converges for every $\delta > 0$, and the left-hand side of \eqref{bnq} is $O_\delta(1)$. Since $\delta > 0$ is arbitrary, the result for $b_n$ follows.
\end{proof}

\section{The zero detection framework}\label{zeroframework}

Fix $0<\eta<1/2$ and let $X=T^\eta$. Recall that $b_n$ are the coefficients of the reciprocal $L$-function $\frac{1}{L(s)}$, and let $M_X(s) = \sum_{n \leq X} b_n n^{-s}$. Further, write
\[ L(s) M_X(s) = \sum_{n=1}^\infty C_n n^{-s}. \]
Then, $C_1 = 1$ and $C_n = 0$ for $2 \leq n \leq X$. To motivate the zero detection argument, first consider a maximally unbalanced approximate functional equation. Taking $Z = \mathcal C_L(t) T^\delta$ in \eqref{approximatefe}, where $\delta > 0$  is arbitrarily small, makes the dual sum negligible and allows us to write
\[ L(s) \approx \sum_{n \lessapprox \mathcal C_L(t)} \frac{a_n}{n^s}. \]
After multiplying by the mollifier $M_X(s)$, this becomes
\[ L(s)M_X(s) \approx 1+\sum_{X<n\lessapprox X\mathcal C_L(t)} \frac{C_n}{n^s}. \]
At a zero $\rho = \beta + i \gamma$, the tail must cancel the constant term, so dyadic decomposition produces a Dirichlet polynomial that is large at $t = - \gamma$. This approach allows us to detect all zeros through detecting large values of Dirichlet polynomials, but it produces polynomials of length as large as $T^\eta \mathcal C_L(t)$, where we will eventually take $\eta$ to be arbitrarily small.

To shorten the polynomials, we instead keep both sums in \eqref{approximatefe} and reduce to the length $Z = \mathcal C_L(t)^{1/2}$. The tradeoff is that the two sums do not behave symmetrically under mollification. The mollifier cancels the initial coefficients for the left-hand sum, and we again reduce to a large-values problem; this forms the Type I contribution. No analogous cancellation occurs for the right-hand sum. The corresponding Type II contribution takes the form of a shifted contour integral and is treated by a different analytic input: a mean-value estimate for $L(s)$ on the critical line.

We now make this decomposition precise, following here the methodology of \cite{HB}. Let $Y\geq 4$. For $s = \sigma + it$ with $\sigma > -1$, we have the Mellin transform
\[ \sum_{n=1}^\infty C_n n^{-s} e^{-n/Y} = \frac{1}{2 \pi i} \int_{2 - i \infty}^{2 + i \infty} L(s + w) M_X(s+w) Y^w \Gamma(w) dw. \]
We can write the left-hand side as
\[ e^{-1/Y} + \sum_{n > X} C_n n^{-s} e^{-n/Y}. \]
Further, supposing that $\sigma > 1/2$, we estimate the integral on the right-hand side.\footnote{The zero detection argument below is needed only for $\sigma>\frac{1}{2}$. The endpoint $\sigma=\frac{1}{2}$ follows immediately from the zero counting estimate in Lemma \ref{standardzerocount}.} We move the line of integration to $\Real(w) := 1/2 - \sigma$ and pick up a simple pole from $\Gamma$ at $w = 0$. If $L(s)$ has a pole at $1$, we also cross the corresponding pole of $L(s+w)$ at $w=1-s$ of order $-\ord_{s=1}L(s)$. Let $\mathcal{R}_L(s;Y)$ denote the resulting residue contribution at $w=1-s$, with the convention that $\mathcal{R}_L(s;Y) = 0$ if $L(s)$ is holomorphic at $1$. (If $L(s)$ has a simple pole at $1$, then $\mathcal{R}_L(s;Y)= \operatorname{Res}_{s=1} L(s) M_X(1) Y^{1-s}\Gamma(1-s)$.)
We can write the shifted integral as
\[ L(s) M_X(s) + \mathcal{R}_L(s;Y) + \frac{1}{2 \pi i} \int_{1/2 - \sigma - i \infty}^{1/2 - \sigma + i \infty} L(s + w) M_X(s+w) Y^w \Gamma(w)\,dw.\]
Given that $s = \rho = \beta + i \gamma$ is a zero of $L(s)$ and that $Y \geq 4$ so that $|e^{-1/Y} - 1| \leq \frac{1}{4}$, at least one of these expressions has an absolute value of at least $\frac{1}{4}$:
\begin{equation}\label{exps} \mathcal{R}_L(\rho;Y); \quad \sum_{n > X} C_n n^{-\rho} e^{-n/Y}; \quad \frac{1}{2 \pi i} \int_{1/2 - \beta - i \infty}^{1/2 - \beta + i \infty} L(\rho + w) M_X(\rho+w) Y^w \Gamma(w) \ dw. \end{equation}

Let $N_0, N_1, N_2$ denote the number of distinct $\rho = \beta + i\gamma$ with $\beta \geq \sigma$ and $|\gamma| \leq T$ such that each of the previous expressions, respectively, is at least $1/4$ in absolute value. We refer to $N_1$ and $N_2$ as the Type I and Type II contributions, respectively; we will show that $N_0$ is negligible. By the multiplicity bound in Lemma \ref{standardzerocount}, we have
\[ N_L(\sigma,T) \ll (N_0+N_1+N_2)\log T.\]

We bound $N_0$ in the following lemma.

\begin{lem}\label{n0} Suppose that $Y \leq T^6$ and $X = T^\eta$ with $0 < \eta < 1/2$. We have that
\[ N_0 \ll (\log T)^2. \]
\end{lem}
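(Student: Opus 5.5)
If $L(s)$ is holomorphic at $s=1$, then $\mathcal{R}_L\equiv 0$ and $N_0=0$, so I will assume $L(s)$ has a pole of order $m\geq 1$ at $s=1$. The plan is to show that $|\mathcal{R}_L(\rho;Y)|<\frac14$ unless $|\gamma|\ll \log T$. Once this is established, Lemma~\ref{standardzerocount}, applied to an interval of length $O(\log T)$, bounds the number of such zeros by $O((\log T)^2)$.

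The first step is to write the residue explicitly. The integrand has a pole of order $m$ at $w=1-\rho$, coming from $L(\rho+w)$. The residue is therefore a linear combination, with coefficients depending only on $L$, of terms of the form
\[
\frac{d^j}{dw^j}\Big[M_X(\rho+w)\,Y^{w}\,\Gamma(w)\Big]_{w=1-\rho}, \qquad 0\leq j\leq m-1.
\]
Expanding each derivative by the Leibniz rule gives products of three types of factor:
\begin{itemize}
\item derivatives of the mollifier, $M_X^{(a)}(1)=\sum_{n\leq X} b_n(-\log n)^a n^{-1}$;
\item derivatives of the smoothing weight, $(\log Y)^{b}\,Y^{1-\rho}$;
\item derivatives of the Gamma factor, $\Gamma^{(c)}(1-\rho)$.
\end{itemize}

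The second step bounds each factor for $\rho=\beta+i\gamma$ with $\beta\geq\sigma>1/2$ and $|\gamma|\leq T$. For the mollifier, using $|b_n|n^{-1}\leq X^{\delta}|b_n|n^{-1-\delta}$ and Lemma~\ref{bn-properties}, we get $|M_X^{(a)}(1)|\ll_\delta X^{\delta}(\log X)^a\ll T^{\delta}$. Next, $|Y^{1-\rho}|=Y^{1-\beta}\leq Y^{1/2}\leq T^3$, and $(\log Y)^b\ll(\log T)^{m}$. For the Gamma factor, the real part $1-\beta$ lies in $[0,1/2)$, and we need to stay away from the pole of $\Gamma$ at $0$. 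This is handled by noting that $\beta\leq 1$ and that zeros of $L$ with $\beta$ near $1$ and $|\gamma|$ small are finite in number, so they contribute $O(1)$ and can be discarded. For the remaining zeros, Stirling's formula together with Cauchy's estimates for the derivatives gives
\[
|\Gamma^{(c)}(1-\rho)|\ll (\log(|\gamma|+2))^{c}\,e^{-\pi|\gamma|/2}(|\gamma|+1)^{1/2-\beta}.
\]

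Combining these bounds yields
\[
|\mathcal{R}_L(\rho;Y)|\ll T^{3+\delta}(\log T)^{O(1)}\,e^{-\pi|\gamma|/2}.
\]
The right-hand side is less than $\frac14$ as soon as $|\gamma|\geq C\log T$ for a suitable constant $C$, which is what the first paragraph required. The remaining zeros satisfy $|\gamma|\leq C\log T$, and there are $O((\log T)^2)$ of them by Lemma~\ref{standardzerocount}.

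The only delicate point I anticipate is uniformity in the Gamma-factor derivatives for $\rho$ near $1$. This is harmless because, once $L$ is fixed, only finitely many zeros lie in any compact region.
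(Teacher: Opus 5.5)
Your proposal is correct and follows essentially the same route as the paper: you expand the residue at $w=1-\rho$ into products of $Y^{1-\rho}(\log Y)^u$, $M_X^{(v)}(1)$ and $\Gamma^{(\ell)}(1-\rho)$, and use the exponential decay from Stirling's formula to force $|\gamma|\ll\log T$. The remaining zeros are then counted with Lemma~\ref{standardzerocount}, and the differences from the paper are only cosmetic. You discard the finitely many zeros near $s=1$ where the paper discards those with $|\gamma|\le 2$, you bound the mollifier by $X^{\delta}$ where the paper uses $X^{1/2}$, and you count with the local bound on an interval of length $O(\log T)$ where the paper applies \eqref{zerocount} at height $\log T$.
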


To bound $N_2$, we use the fact that $\rho+w$ lies on the critical line, allowing us to apply a critical-line second-moment estimate for $L(s)$.

\begin{prop}\label{n2}
Suppose that $Y\le T^6$ and $X=T^\eta$ with $0<\eta<1/2$. Then we have \[ N_2 \lessapprox X^2 Y^{2(1/2 - \sigma)} T \ll_\eta T^{1 + 3 \eta} Y^{1-2\sigma}.\]
\end{prop}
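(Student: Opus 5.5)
We bound $N_2$, the number of zeros $\rho=\beta+i\gamma$ with $\beta\ge\sigma$ and $|\gamma|\le T$ for which the shifted integral in \eqref{exps} has modulus at least $1/4$. The plan is to bound that integral pointwise by a short average of $|L|$ on the critical line, sum over well-spaced zeros, and finish with a second-moment estimate.

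\textbf{Step 1: reduce to a sum over critical-line values.} Put $w=\frac12-\beta+iv$, so that $\rho+w=\frac12+i(\gamma+v)$. Since $\beta\ge\sigma>\frac12$, we have $\Real w=\frac12-\beta\in[-\frac12,0)$. The line avoids the pole of $\Gamma$ at $0$, and on it
\[
|\Gamma(w)|\ll e^{-|v|}\,(1+|v|)^{O(1)}\,|\tfrac12-\beta|^{-1}.
\]
The factor $|\tfrac12-\beta|^{-1}$ is harmful only when $\beta$ is very close to $\tfrac12$. Zeros with $\beta<\tfrac12+1/\log T$ can be counted by Lemma~\ref{standardzerocount}, which gives $T^{1+o(1)}$. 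For these zeros $Y^{1-2\sigma}\ge Y^{-2/\log T}\ge T^{-12/\log T}\gg 1$, using $Y\le T^6$, so this count is already within the claimed bound. For the remaining zeros we lose only a factor $\log T$.

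Next, trivially $|M_X(\tfrac12+iu)|\le\sum_{n\le X}|b_n|n^{-1/2}$. Using Lemma~\ref{bn-bound} with Hölder, this is at most $X^{1/2+o(1)}$, and crudely at most $X$. Also $|Y^w|=Y^{1/2-\beta}\le Y^{1/2-\sigma}$.

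Truncate the integral to $|v|\le \log^2T$; the tail is negligible by the exponential decay of $\Gamma$ together with Lemma~\ref{polygrowth}. For each counted zero this gives
\[
\frac14\ll X\,Y^{1/2-\sigma}(\log T)\int_{|v|\le\log^2T}\bigl|L(\tfrac12+i(\gamma+v))\bigr|\,dv.
\]

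\textbf{Step 2: sum over zeros.} Group the zeros into a subset whose ordinates are separated by at least $2\log^2T$. By the local count \eqref{localzerocount}, this costs a factor $(\log T)^{O(1)}$. Summing the displayed inequality over the subset, the integration ranges are disjoint, so
\[
N_2\lessapprox X\,Y^{1/2-\sigma}\int_{-2T}^{2T}\bigl|L(\tfrac12+iu)\bigr|\,du .
\]
Alternatively, square first via Cauchy--Schwarz: from $1\ll X^2Y^{1-2\sigma}(\int|L|)^2$ one gets
\[
N_2\lessapprox X^2Y^{1-2\sigma}\int_{-2T}^{2T}\bigl|L(\tfrac12+iu)\bigr|^2\,du,
\]
which matches the stated shape $X^2Y^{2(1/2-\sigma)}T$.

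\textbf{Step 3: the second-moment input (main obstacle).} It remains to show
\[
\int_{-2T}^{2T}\bigl|L(\tfrac12+iu)\bigr|^2\,du\lessapprox T.
\]
Convexity \eqref{halfbound} alone only gives $T^2$, which is too weak. I would apply the approximate functional equation \eqref{approximatefe} with $Z=\mathcal C_L(t)^{1/2}\asymp T$. This expresses $L(\tfrac12+iu)$ by two smoothed Dirichlet polynomials of length $T^{1+o(1)}$ with coefficients $a_n$, plus negligible errors. Then apply the Montgomery--Vaughan mean value theorem: the mean square of each polynomial is $\ll (T+N)\sum_{n\le N}|a_n|^2/n$. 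By Hölder and \eqref{qmoment} (with $q\ge2$), $\sum_{n\le N}|a_n|^2\ll N^{1+o(1)}$, so $\sum_{n\le N}|a_n|^2/n\ll N^{o(1)}$. This yields $\lessapprox T$.

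The delicate point is that the approximate functional equation weights depend on $t$. I would handle this by dyadically splitting $u$ into ranges $|u|\sim U$, with a fixed $Z\asymp U$ on each range, and by separating the variables in the weights through a Mellin integral representation. Combining this with Step 2 gives
\[
N_2\lessapprox X^2Y^{1-2\sigma}T,
\]
and the second inequality in the statement follows from $X=T^\eta$, since the $\lessapprox$ losses are absorbed into $T^{\eta}$.
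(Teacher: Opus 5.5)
Your proposal is correct and follows essentially the same route as the paper: dispose of $\sigma\le\tfrac12+1/\log T$ via Lemma~\ref{standardzerocount}, truncate the shifted integral to a window of length $(\log T)^{O(1)}$ around $\gamma$, square locally by Cauchy--Schwarz, sum over zeros using the local zero count, and conclude with the critical-line second moment (Lemma~\ref{secondmoment}), which comes from the approximate functional equation, a Mellin separation of the $t$-dependent weights, and the mean value theorem. The differences are cosmetic: you extract a $2\log^2T$-separated subset instead of using the multiplicity function $n(u)$, and you bound $M_X$ via Lemma~\ref{bn-bound} rather than Lemma~\ref{bn-properties}. You are also right that the locally squared version, not the unsquared $L^1$ version, is the one that gives the stated shape $X^2Y^{1-2\sigma}T$.
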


The Type I contribution is bounded using estimates for large values of Dirichlet polynomials. We can reduce the task of bounding $N_1$ to the task of bounding $|W|$ as follows.

\begin{prop}\label{n1} Suppose that $T\le Y\le T^6$, $X=T^\eta$ with $0<\eta<1/2$, and let $N_1$ denote the Type I count defined above. Then, there exists $C> 0$, an integer $N$ with
\[ \lfloor Y^{1/2} \rfloor \le N\le CY\log T,\]
coefficients $c_n$ supported on $n \sim N$ satisfying
\[ \sum_{n \sim N} |c_n|^q \ll_\eta N^{1+ o_\eta(1)}\] and a
$1$-separated set $W\subset [0,T]$ such that \[ N_1\lessapprox_{\eta} |W| \]
and such that the Dirichlet polynomial $D(t):=\sum_{n \sim N} c_n n^{it}$ satisfies, for all $t \in W$,
\[|D(t)|\geq N^{\sigma}.\]
\end{prop}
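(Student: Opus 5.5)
We follow the Heath-Brown-style reduction. For each zero $\rho=\beta+i\gamma$ counted in $N_1$ we have $\bigl|\sum_{n>X} C_n n^{-\rho}e^{-n/Y}\bigr|\ge 1/4$.

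\emph{Truncation.} The weight $e^{-n/Y}$ makes the range $n> Y(\log T)^2$ negligible. To see this, note that $C_n=\sum_{de=n,\,d\le X} b_d a_e$. Hölder's inequality, Lemma~\ref{bn-bound} and \eqref{qmoment} give $\sum_{n\le x}|C_n|\ll x^{1+o(1)}X^{O(1)}$, and since $\beta>1/2$ the tail is $\ll T^{-1}$. Hence some range $X<n\le CY\log T$ carries the large value. We split this range into $O(\log T)$ dyadic blocks $n\sim N$. By pigeonhole there is a block $N$ and a subset of $\gg N_1/\log T$ zeros for which $\bigl|\sum_{n\sim N}C_n e^{-n/Y}n^{-\rho}\bigr|\gg 1/\log T$.

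\emph{Removing $\beta$.} We remove the dependence on $\beta$ by partial summation in $n$. Writing $n^{-\beta}=N^{-\beta}\cdot(n/N)^{-\beta}$ and using the Mellin/partial-summation trick (integrating over a sub-interval $[N,u]$), we get, for some $u\in(N,2N]$,
\[
\Bigl|\sum_{N<n\le u}C_n e^{-n/Y}n^{-i\gamma}\Bigr|\gg N^{\beta}/\log T\ge N^{\sigma}/\log T .
\]
The dependence on $u$ is removed by the standard device of a Perron-type truncation integral. Its cost is absorbed into $\lessapprox$ after a further pigeonhole over a discretised $u$, or after replacing the sharp cutoff by an integral over $\tau$ with weight $\min(1,1/|\tau|)$, which shifts $t$ by $O(T^{o(1)})$. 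We set $c_n=C_n e^{-n/Y}$ times the resulting bounded weights, absorbing the $\log T$ losses by scaling $c_n$ up by $(\log T)^{O(1)}$; this keeps $\sum|c_n|^q\ll N^{1+o(1)}$.

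\emph{Separation and moment bound.} Zeros are separated by choosing a $1$-separated subset of the ordinates $-\gamma$ (translated into $[0,T]$ by symmetry or by shifting, with conjugation handled via the dual). Lemma~\ref{standardzerocount} shows each unit interval contains $O(\log T)$ zeros, so $N_1\lessapprox|W|$.

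The lower bound $N\ge\lfloor Y^{1/2}\rfloor$ is obtained by handling small $N$ separately. When $N<Y^{1/2}$ (but $N>X$), we raise the polynomial to a power $k$ with $N^k\in[Y^{1/2},Y]$. This keeps the threshold $N^{k\sigma}$, and the $q$th moment of the coefficients of $D^k$ is controlled by Hölder's inequality and a divisor-type bound giving $N^{k(1+o(1))}$. The $o_\eta(1)$ dependence comes from $k\le 1/\eta$.

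The key estimate, and the one needing most care, is the $q$th-moment bound for the convolution $C_n=\sum_{d\le X,\,de=n}b_da_e$. By Hölder, $|C_n|^q\le \tau_X(n)^{q-1}\sum_{de=n,\,d\le X}|b_d|^q|a_e|^q$, where $\tau_X(n)=\#\{d\mid n:\ d\le X\}$. Since $\tau(n)\ll n^{o(1)}$, summing over $n\sim N$ gives $\ll N^{o(1)}\sum_{d\le X}|b_d|^q\sum_{e\sim N/d}|a_e|^q\ll N^{1+o(1)}\sum_{d\le X}|b_d|^q/d$. By Lemma~\ref{bn-bound} and dyadic summation this last sum is $X^{o(1)}$, which is acceptable.
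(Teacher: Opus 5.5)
Your proposal has a genuine gap in the step that removes $\beta$. Partial summation does give $\max_{N<u\le 2N}\bigl|\sum_{N<n\le u}C_ne^{-n/Y}n^{-i\gamma}\bigr|\gg N^{\beta}/\log T$. However, the maximising $u=u_\rho$ depends on the zero, so you do not yet have a single Dirichlet polynomial, and neither of your remedies removes $u$ at cost $T^{o(1)}$.

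Writing the sharp cutoff at $u$ as a Perron-type integral produces a kernel of size $\min(1,1/|\tau|)$, which is not integrable. Truncating at height $U$ only reproduces the cutoff smoothed at scale $N/U$ in $n$. The discrepancy can be as large as $\sum_{|n-u|\le N/U}|c_n|$, i.e.\ of size $N/U$ already for unimodular coefficients. Beating the main term $N^{\sigma}/\log T$ therefore forces $U$ of size about $N^{1-\sigma}$ (larger still under mere $\ell^q$ control). This is a positive power of $T$ for fixed $\sigma<1$, since $N\gg T^{1/2}$. Each ordinate is then moved by up to $U$, and up to $\asymp U\log T$ zeros can land in the same unit interval, so you only get $N_1\lessapprox U|W|$.

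Discretising $u$ fails for the same reason. Between grid points the partial sum can change by $\sum_{u_j<n\le u_{j+1}}|c_n|$, so in general you need $\gg N^{1-\sigma}$ grid points, and pigeonholing over them loses that factor.

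The fix is not to introduce a sharp cutoff at all. Let $P(s)=\sum_{n\sim N}d_nn^{-s}$ be your dyadic (or powered) block. Its coefficients are already supported on $(N,2N]$, where $(n/N)^{-(\beta-\sigma)}$ is a smooth function of $\log n$. Extend it as $\psi_\rho(v)=e^{-(\beta-\sigma)(v-\log N)}w_0(e^v/N)$, with $w_0$ smooth, supported on $[1/2,4]$ and equal to $1$ on $[1,2]$. The derivatives of $\psi_\rho$ are bounded uniformly for $\beta-\sigma\in[0,1]$, so $\widehat{\psi_\rho}$ decays rapidly uniformly in $\rho$. Fourier inversion then gives
\[ \sum_{n\sim N}d_nn^{-\beta-i\gamma}=N^{\sigma-\beta}\int_{\RR}\widehat{\psi_\rho}(\xi)\,P\bigl(\sigma+i(\gamma-2\pi\xi)\bigr)\,d\xi . \]
Since $|P(\sigma+iu)|\ll N^{1-\sigma+o(1)}\ll T^{O(1)}$ by the moment bound, you can truncate at $|\xi|\le T^{\delta}$ with error $O(T^{-100})$. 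You then choose $\xi_\rho$ with $|P(\sigma+i(\gamma-2\pi\xi_\rho))|\gg N^{\beta-\sigma}/\log T$, and lose only $T^{\delta}$ when extracting a $1$-separated set. This is exactly the paper's argument.

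The rest of your outline is sound. This includes the $q$th-moment bound for $C_n$, and raising a short block to a $k$th power in place of the paper's Jutila decomposition; note $k=O(1/\eta)$ rather than $k\le 1/\eta$, since $Y$ may be as large as $T^6$. One small correction: the truncation must be justified at $CY\log T$, where $e^{-n/Y}\le T^{-C}$, not at $Y(\log T)^2$, since the statement requires $N\le CY\log T$.
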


We will also make use of the following standard estimate for $|W|$.

\begin{prop}\label{original} Let $(c_n)_{n \in \NN} \subset \CC$ such that either $|c_n| \leq 1$ or, more generally, \mbox{$\sum_{n \sim N} |c_n|^2 \leq N$}. Suppose that $W$ is a set of $1$-separated points in $[0,T]$ such that 
\[ D(t) :=  \sum_{n \sim N} c_n n^{it}, \qquad |D(t)| \geq N^\sigma \]
for all $t \in W$. Then, we have Ingham's mean value theorem as in \cite{Ingham}
\begin{equation}\label{ingham}|W| \lessapprox N^{2 - 2 \sigma} + TN^{1 - 2 \sigma} \end{equation}
and Montgomery--Hal\'{a}sz--Huxley's large values estimate as in \cite{Huxley}
\begin{equation}\label{huxley} |W| \lessapprox N^{2 - 2 \sigma} + TN^{4 - 6 \sigma}. \end{equation}
\end{prop}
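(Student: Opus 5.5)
We prove the two bounds in Proposition~\ref{original} separately. We work only under the $\ell^2$ hypothesis $\|\mathbf c\|_2^2:=\sum_{n\sim N}|c_n|^2\le N$, since $|c_n|\le 1$ implies it. In both cases we start from the duality principle. Choose unimodular $\xi_t$ with $\xi_t D(t)=|D(t)|$. Then
\[ |W|N^{\sigma}\le \sum_{t\in W}\xi_t D(t)=\sum_{n\sim N}c_n\sum_{t\in W}\xi_t n^{it}, \]
and Cauchy--Schwarz gives
\[ |W|^2N^{2\sigma}\le \|\mathbf c\|_2^2\sum_{n\sim N}\Bigl|\sum_{t\in W}\xi_tn^{it}\Bigr|^2. \]
We insert a smooth majorant $w(n/N)$ supported near $[1/2,4]$ and expand the square. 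This bounds the right-hand side by
\[ N\sum_{t,t'\in W}|\widehat{w}_N(t-t')|, \qquad \widehat w_N(u)=\sum_n w(n/N)n^{iu}. \]
The diagonal $t=t'$ contributes $\ll N^2|W|$.

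\emph{Ingham's bound \eqref{ingham}.} For the off-diagonal terms we use $|u|=|t-t'|\ge1$. Poisson summation (or the first-derivative test after partial summation) gives
\[ \widehat w_N(u)\ll N\Bigl(\frac{1}{1+|u|}\Bigr)^{A}+\text{(terms from frequencies } m\neq0), \]
and these are small except when $|u|\gg N$. The cleaner route is the Montgomery--Vaughan mean value theorem
\[ \int_0^T|D(t)|^2\,dt\ll (T+N)\|\mathbf c\|_2^2. \]
We pass from the integral to the $1$-separated set by Gallagher's lemma (Sobolev inequality): $|D(t)|^2$ at a point is bounded by a local average of $|D|^2+|D||D'|$, and $D'$ has coefficients $c_n\log n$. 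This yields
\[ |W|N^{2\sigma}\lessapprox (T+N)N, \]
which is \eqref{ingham}.

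\emph{Huxley's bound \eqref{huxley}.} Return to the duality inequality and bound the off-diagonal sum with the Hal\'asz--Montgomery inequality. Poisson summation turns $\widehat w_N(u)$ into a main term $\approx N\widehat w(u/2\pi)$, negligible for $|u|\ge1$ after choosing $w$ with rapidly decaying transform. The remaining terms are controlled by the standard estimate $\widehat w_N(u)\ll |u|^{1/2}\log T$ for $1\le|u|\le T$, which comes from van der Corput or the approximate functional equation for $\zeta(1/2+iu)$. This gives
\[ |W|^2N^{2\sigma}\lessapprox N\bigl(N|W|+|W|^2T^{1/2}\bigr). \]
Hence $|W|\lessapprox N^{2-2\sigma}$ unless $N^{2\sigma}\lessapprox NT^{1/2}$.

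To reach the sharper term $TN^{4-6\sigma}$, apply Huxley's subdivision. Split $[0,T]$ into intervals of length $T_0$ and apply the inequality on each, so the off-diagonal term becomes $|W_j|^2T_0^{1/2}N$. Choose $T_0\asymp N^{4\sigma-2}$ so that it is dominated by the left side. Then each block has $|W_j|\lessapprox N^{2-2\sigma}$, and summing over the $T/T_0+1$ blocks gives
\[ |W|\lessapprox N^{2-2\sigma}\bigl(1+TN^{2-4\sigma}\bigr)=N^{2-2\sigma}+TN^{4-6\sigma}. \]
Since $\|\mathbf c\|_2^2\le N$ is the only property of the coefficients used, the $\ell^2$ hypothesis suffices.

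The main obstacle is the off-diagonal estimate $\sum_n w(n/N)n^{iu}\ll |u|^{1/2+o(1)}$ uniformly for $1\le|u|\le T$, together with the bookkeeping in the subdivision. Both are classical, and we would cite \cite{Huxley}, \cite{MontgomeryTenLectures} for the details rather than redo them.
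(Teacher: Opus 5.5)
Your proposal is correct. The paper gives no proof of this proposition and simply cites Ingham and Huxley, and your sketch is the classical argument behind those citations (the mean value theorem plus Gallagher's lemma for \eqref{ingham}; duality, the off-diagonal bound $\sum_n w(n/N)n^{iu}\ll_A N(1+|u|)^{-A}+|u|^{1/2}\log(2+|u|)$ and Huxley's subdivision into blocks of length $T_0\approx N^{4\sigma-2}$ for \eqref{huxley}), using only $\|\mathbf c\|_2^2\le N$. One small correction: the $m=0$ Poisson term is not negligible when $|u|\asymp 1$, but it is $\ll_A N(1+|u|)^{-A}$, so over the $1$-separated $t'$ it totals $O(N|W|)$ and merges with the diagonal, leaving your displayed inequality unaffected.
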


\section{The zero detection estimates}\label{n0n1n2}

We continue to regard $L$ and $q$ as fixed. Unless otherwise indicated, the implied constants may depend on $L$ and $q$ but are uniform in $T,\sigma,X,Y,N$, and in the zeros under consideration. The parameter $\eta\in(0,1/2)$ is always fixed independently of $T$, and dependence on $\eta$ is indicated explicitly. The parameters $\delta > 0$ and $A > 0$ denote auxiliary parameters and may take different values in different places; any relevant dependence on them is indicated where they occur.

The arguments in this section follow the classical zero detection method of Heath-Brown \cite{HB}. We include the details in order to verify the method under the hypotheses in Section~\ref{properties} and to keep track of the parameter dependence for later optimization. The estimate for $N_0$ does not use the coefficient-moment hypothesis, while the estimate for $N_2$ requires only the case $q=2$ of \eqref{qmoment}. The full $q$th-moment hypothesis enters only in the treatment of $N_1$. Apart from this additional bookkeeping, we follow standard methods.

\subsection{Proof of Lemma \ref{n0}}
Let $s = \sigma + it$. If $L(s)$ is holomorphic at $1$, then $\mathcal{R}_L(s;Y) = 0$ and hence, $N_0 = 0$. Otherwise, write $m := -\ord_{s=1} L(s) \geq 1$. Setting $z = w - (1-s)$, we have that 
\[ \mathcal{R}_L(s;Y) = \operatorname{Res}_{z = 0} L(1+z) M_X(1+z) Y^{1 - s + z} \Gamma(1-s+z).\]
For $s \neq 1$ with $\frac{1}{2} < \sigma \leq 1$, $M_X(s+w)$, $Y^{w}$, and $\Gamma(w)$ are holomorphic at $w = 1-s$, so the residue is a finite linear combination, with coefficients depending only on $L$, of terms of the form
\[ Y^{1-s}(\log Y)^u \, M_X^{(v)}(1)\, \Gamma^{(\ell)}(1-s), \qquad 0\le u,v,\ell\le m-1.\]
Hence, for $s \neq 1$ with $\frac{1}{2} < \sigma \leq 1$, we have the bound
\begin{equation}\label{Rbound}
|\mathcal{R}_L(s;Y)| \ \ll\ Y^{1-\sigma}(\log Y)^{m-1} \cdot \max_{0\le v\le m-1}|M_X^{(v)}(1)| \cdot \max_{0\le \ell\le m-1}|\Gamma^{(\ell)}(1-s)|.  \end{equation}
By Lemma \ref{bn-properties}, for any fixed $\delta>0$,
\[ |M_X^{(v)}(1)| \leq \sum_{n\leq X}\frac{|b_n|(\log n)^v}{n}\leq X^\delta(\log X)^v \sum_{n\geq1}\frac{|b_n|}{n^{1+\delta}} \ll_{L,\delta} X^\delta(\log X)^v. \]
Moreover, standard estimates for the derivatives of $\Gamma(s)$, obtained
from Stirling's formula, give
\[ |\Gamma^{(\ell)}(1-\sigma-it)| \ \ll_{\ell}\  (\log|t|)^{\ell}\,e^{-\pi/2|t|} \]
for $0\le 1-\sigma\le \frac{1}{2}$, and $|t| > 2$. (The zeros $\rho$ with $|\Ima(\rho)| \leq 2$ contribute $O(1)$.) Noting that $m \ll 1$ and substituting into \eqref{Rbound} with $s=\rho = \beta + i \gamma$ gives
\[ |\mathcal{R}_L(\rho;Y)| \ \ll\ Y^{1-\beta}\,(\log Y)^{O(1)}X^{1/2}(\log X)^{O(1)}\,(\log |\gamma|)^{O(1)} e^{-\pi/2|\gamma|} \ll T^{13/4+o(1)}e^{-\pi|\gamma|/2}, \]
since $\beta\geq\frac12$, $Y\leq T^6$, and $X=T^\eta$ with $\eta<1/2$. If $|\mathcal{R}_L(\rho;Y)|\ge \frac{1}{4}$, then $e^{-\pi/2|\gamma|}\gg T^{-13/4-o(1)}$, hence $|\gamma|\ll \log T$. By \eqref{zerocount} in Lemma \ref{standardzerocount} with $\log T$ substituted for $T$, we get $N_0\ll (\log T)\log\log T\ll (\log T)^2$, as required. \qed 

\subsection{Proof of Proposition \ref{n2}}

We first prove a second-moment bound for $L(s)$ on the critical line. We then show that each Type~II zero forces a nontrivial lower bound for
\[ \int_{\gamma-C\log T}^{\gamma+C\log T}\left|L\!\left(\frac{1}{2}+iu\right)\right|^2\,du. \]
Summing over all such zeros and using the local estimate $ n(u)\ll (\log T)^2$, where $n(u)$ counts the number of relevant zeros with $|\gamma-u|\le C\log T$, gives the required bound for $N_2$.

\begin{lem}\label{secondmoment} Let $L(s)$ be an admissible $L$-function of degree $2$ with $q$th-moment input for some $q \geq 2$. For $T\ge 2$,
\[ \int_{-T}^{T} \left|L\!\left(\frac{1}{2}+iu\right)\right|^2\,du \lessapprox T. \]
\end{lem}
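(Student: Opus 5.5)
We will prove Lemma \ref{secondmoment} by the standard route: an approximate functional equation on the critical line, followed by the Montgomery--Vaughan mean value theorem for Dirichlet polynomials. The only coefficient input is the case $q=2$ of \eqref{qmoment}, namely $\sum_{n\le x}|a_n|^2\ll x^{1+o(1)}$. This follows from the $q$th-moment hypothesis by H\"older's inequality for any $q\ge 2$.

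First, by symmetry and a dyadic decomposition, it suffices to show
\[
\int_{U}^{2U}\left|L\!\left(\tfrac12+iu\right)\right|^2du \lessapprox U
\]
for $2\le U\le T$. The range $|u|\le 2$ contributes $O(1)$ by Lemma \ref{polygrowth}. For $u\in[U,2U]$, we will invoke the approximate functional equation \eqref{approximatefe} (cf.\ \cite[Theorem~5.3]{IK}). It applies because $(s-1)^mL(s)$ is entire of finite order with the stated functional equation. The gamma factor $\Gamma(\frac{s+\kappa_1}{2})\Gamma(\frac{s+\kappa_2}{2})$ has no poles near $\Real s=\frac12$ once $|u|$ is large. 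For $|u|\ge 2$, the contribution from a possible pole at $s=1$ is a residue term that decays exponentially in $|u|$ by Stirling's formula, as in the proof of Lemma \ref{n0}.

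We take $Z=\mathcal C_L(u)^{1/2}\asymp U$. This writes $L(\frac12+iu)$ as
\[
\sum_n a_nn^{-1/2-iu}V_1(n/Z)+\varepsilon(u)\sum_n\overline{a_n}\,n^{-1/2+iu}V_2(nZ/\mathcal C_L(u)),
\]
with $|\varepsilon(u)|=1$ and rapidly decaying weights. The dependence of $V_j$ and $\mathcal C_L(u)$ on $u$ is mild. To remove it, we will express $V_j$ via its Mellin integral $\frac{1}{2\pi i}\int_{(1/\log U)}G(w)\gamma\text{-ratio}\,(Z/n)^w\,dw/w$, truncated to $|\Ima w|\le U^{\delta}$. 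The $u$-dependence of the gamma ratio is smooth and bounded by Stirling. This reduces matters, at a cost of $U^{o(1)}$, to Dirichlet polynomials $\sum_{n\le U^{1+\delta}}a_n n^{-1/2-w}n^{-iu}$ with coefficients independent of $u$. Smooth dyadic truncation and Cauchy--Schwarz over the $O(\log U)$ dyadic pieces and the Mellin variable handle the remaining sums; tails beyond $U^{1+\delta}$ are negligible by rapid decay together with Lemma \ref{polygrowth}.

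For each resulting polynomial of length $N\le U^{1+\delta}$, the Montgomery--Vaughan mean value theorem gives
\[
\int_U^{2U}\Bigl|\sum_{n\sim N}\frac{a_n}{n^{1/2}}n^{-iu}\Bigr|^2du\ll (U+N)\sum_{n\sim N}\frac{|a_n|^2}{n}\ll (U+N)N^{o(1)}\lessapprox U.
\]
The same bound applies to the dual sum, since $|\overline{a_n}|=|a_n|$. Summing over dyadic pieces and over $U$ gives $\lessapprox T$.

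The main obstacle is the bookkeeping in the approximate functional equation: making sure that the $u$-dependence of $V_j$ and of $\varepsilon(u)$ is decoupled cleanly, and that the possible pole and the possibly negative $\Real\kappa_j$ (which lie in $(-1,0)$) cause no trouble. We will handle this by shifting the Mellin contour to $\Real w=1/\log U$ rather than further left, so that no gamma-factor poles are crossed. The unimodular root factor drops out once we take the absolute value of each of the two sums separately via $|A+B|^2\le 2|A|^2+2|B|^2$.
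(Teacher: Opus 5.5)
Your proposal is correct and follows essentially the same route as the paper. The shared steps are the dyadic reduction, the approximate functional equation of \cite[Theorem~5.3]{IK} with balanced length $\asymp U$, and truncation at $n\le U^{1+\delta}$, followed by a shift of the Mellin integral to $\Real w=1/\log U$ so that the $u$-dependent weight has $L^1$-norm $U^{o(1)}$, and finally the mean value theorem combined with $\sum_{n\le x}|a_n|^2\ll x^{1+o(1)}$.

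Two small corrections. First, the tail $n>U^{1+\delta}$ is controlled by the decay of the weight together with $\sum_{n\le x}|a_n|\ll x^{1+o(1)}$, not by Lemma~\ref{polygrowth}; this follows from your $q=2$ bound by Cauchy--Schwarz and is the case $r=1$ of \eqref{lower-coefficient-moments}. Second, if some $\Real\kappa_j<-\tfrac12$, the shift to $\Real w=1/\log U$ does cross a gamma-factor pole, but it lies at height $\asymp U$, where the rapid decay of $G$ makes its residue negligible.
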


\begin{proof}
It suffices to show that for every dyadic interval $[U,2U]$ with $U\ge 2$,
\begin{equation}\label{dyadic-second-moment} \int_U^{2U} \left|L\!\left(\frac{1}{2}+it\right)\right|^2\,dt \ll U^{1+o(1)}. \end{equation}
Indeed, the contribution from $|t|\le2$ is $O(1)$, and summing over the dyadic intervals  contained in $[2,T]$ and in $[-T,-2]$ gives the result. (The negative dyadic intervals are handled identically.)

Fix such a dyadic $U$. Choose an admissible auxiliary function $G$ which, together with its \mbox{derivatives}, decays rapidly on vertical lines. The approximate functional equation of \cite[Theorem 5.3]{IK} applies under the present hypotheses in view of Lemma \ref{polygrowth}, giving
\begin{equation}\label{IK-afe} L\!\left(\frac{1}{2}+it\right) = \mathcal{A}_1(t)+\varepsilon\!\left(\frac{1}{2}+it\right)\mathcal{A}_2(t)+\mathcal{R}(t), \end{equation}
where
\[ \mathcal{A}_1(t):=\sum_{n=1}^\infty \frac{a_n}{n^{1/2+it}} V_{\frac{1}{2}+it}\!\left(\frac{n}{\sqrt{Q}}\right), \qquad \mathcal{A}_2(t):=\sum_{n=1}^\infty \frac{\overline{a_n}}{n^{1/2-it}} V_{\frac{1}{2}-it}\!\left(\frac{n}{\sqrt{Q}}\right),\]
with $|\varepsilon(\frac{1}{2}+it)|=1$ and $V_s(y)$ an explicit smooth function corresponding to our choice of $G$ with integral formula \cite[(5.13)]{IK}. If $\Lambda(s)$ is entire, then $\mathcal{R}(t)=0$. Otherwise, $\mathcal{R}(t)$ is a finite sum of residue terms arising from the possible poles of $\Lambda(s)$ at $0$ and $1$. The rapid decay of $G$ and its derivatives on vertical lines, together with Stirling's formula, gives, for every $A>0$,
\[ \mathcal{R}(t)\ll_{A}(1+|t|)^{-A} \Rightarrow \int_U^{2U}|\mathcal{R}(t)|^2\,dt \ll_{A} \int_U^{2U}(1+t)^{-2A}\,dt \ll 1. \]
We now treat $\mathcal A_1$. For $r = 1, 2$, the coefficient $q$th moment hypothesis and H\"{o}lder's inequality give
\begin{equation}\label{lower-coefficient-moments} \sum_{n\leq x}|a_n|^r\ll x^{1+o(1)}. \end{equation}
Fix $\varepsilon > 0$. From the integral representation \cite[(5.13)]{IK} and Stirling's formula, for every $B>0$,
\begin{equation}\label{V-rapid-decay} V_{\frac{1}{2}+it}\left(\frac{n}{\sqrt Q}\right) \ll_B \left(1+\frac{n}{U}\right)^{-B}
\end{equation}
uniformly for $t\in[U,2U]$ and $n\geq1$. We claim that this permits the sum defining $\mathcal{A}_1(t)$ to be truncated at $n \leq U^{1+\varepsilon}$ with negligible error. By dyadic decomposition and \eqref{lower-coefficient-moments} with $r=1$,

\begin{align*} \sum_{n>U^{1+\varepsilon}} \frac{|a_n|}{\sqrt n} \left| V_{\frac{1}{2}+it}\left(\frac{n}{\sqrt Q}\right) \right|
&\ll_{B} U^B \sum_{\substack{N>U^{1+\varepsilon}\\N\ {\rm dyadic}}} N^{1/2-B+o(1)}\ll_{B} U^{(1+ \varepsilon)/2 - \varepsilon B + o(1)}. \end{align*}
Given $A>0$, choosing $B$ sufficiently large in terms of $A$ and $\varepsilon$ shows that this is $O_{A,\varepsilon}(U^{-A})$. Hence,
\[ \mathcal A_1(t) = \sum_{n\leq U^{1+\varepsilon}} \frac{a_n}{n^{1/2+it}} V_{\frac{1}{2}+it}\left(\frac{n}{\sqrt Q}\right) +O_{A,\varepsilon}(U^{-A}) \]
uniformly for $t \in [U, 2U]$. It remains to remove the $t$-dependence of the smooth weight before applying the mean-value theorem. Shifting the line of integration in \cite[(5.13)]{IK} to $\Real w=1/\log U$  expresses the truncated sum as an integral of ordinary Dirichlet polynomials in $t$, with coefficients independent of $t$. Stirling's formula and the rapid decay of $G$ show that the integral is absolutely convergent and that the $L^1$-norm of the corresponding Mellin weight is $U^{o(1)}$. Taking $L^2$-norms and applying the mean-value theorem for Dirichlet polynomials then yields
\[ \int_U^{2U}|\mathcal A_1(t)|^2\,dt \ll U^{1+\varepsilon+o(1)} \sum_{n\leq U^{1+\varepsilon}}\frac{|a_n|^2}{n}.\]
By \eqref{lower-coefficient-moments} with $r=2$ and dyadic summation,
\[ \sum_{n\leq  U^{1+\varepsilon}}\frac{|a_n|^2}{n} \ll U^{o(1)}. \]
Since $\varepsilon$ is arbitrary,
\[ \int_U^{2U}|\mathcal A_1(t)|^2\,dt \ll U^{1+o(1)}. \] The same argument applies to $\mathcal A_2$, and \eqref{IK-afe} therefore implies \eqref{dyadic-second-moment}. This proves the lemma.
\end{proof}

\begin{proof}[Proof of Proposition \ref{n2}]
Let
\[ I(\rho):=\frac{1}{2\pi i}\int_{1/2-\beta-i\infty}^{1/2 - \beta+i\infty} L(\rho + w) M_X(\rho+w) Y^w \Gamma(w)\,dw \]
denote the third quantity in \eqref{exps}, so that $N_2$ counts the distinct zeros $\rho=\beta+i\gamma$ with $\beta\ge \sigma$, $|\gamma|\le T$, and $|I(\rho)|\ge \frac{1}{4}$. We first dispose of the range $\frac{1}{2}\leq\sigma\leq\frac{1}{2}+\frac{1}{\log T}$. Since $Y\leq T^6$, we have $Y^{1-2\sigma}\geq Y^{-2/\log T}\geq e^{-12}$.
Therefore, by Lemma \ref{standardzerocount},
\[ N_2 \leq N_L(\sigma,T) \ll T\log T \lessapprox X^2Y^{1-2\sigma}T. \]
We may thus assume that $\sigma\geq\frac{1}{2}+\frac{1}{\log T}$. In particular, for every zero counted by $N_2$ we have $\frac{1}{2}-\beta \le -\frac{1}{\log T}$. Fix a zero $\rho$. On the line $w=1/2-\beta+iv$, we have
\[ I(\rho) = \frac{Y^{1/2-\beta}}{2\pi} \int_{-\infty}^{\infty} L\!\left(\frac{1}{2}+i(\gamma+v)\right) M_X\!\left(\frac{1}{2}+i(\gamma+v)\right) Y^{iv}\Gamma(\frac{1}{2}-\beta+iv)\,dv.\]
By \eqref{halfbound} and Lemma~\ref{bn-properties}, we have
\[ L\!\left(\frac12+iu\right)\ll(1+|u|)^{1/2}, \qquad \left|M_X\left(\frac{1}{2}+iu\right)\right| \leq X^{1/2+\delta} \sum_{n\geq1}\frac{|b_n|}{n^{1+\delta}} \ll_{\delta}X^{1/2+\delta}. \]
Choosing $\delta<\frac{1}{2}$, we obtain $M_X\left(\frac{1}{2}+iu\right)\ll X.$ Since $|\gamma|\le T$ and $X=T^\eta\ll T$, Stirling's gives
\begin{align*} &\int_{|v|>C\log T} \left|L\!\left(\frac{1}{2}+i(\gamma+v)\right)\right| \left|M_X\!\left(\frac{1}{2}+i(\gamma+v)\right)\right| \left|\Gamma(\frac{1}{2}-\beta+iv)\right|\,dv \\ &\qquad\ll T^{3/2}\int_{|v|>C\log T}e^{-\pi |v|/2}\,dv \ll T^{-100},
\end{align*}
uniformly for $T \geq 2$ provided $C$ is chosen sufficiently large. Hence, if $|I(\rho)|\ge \frac{1}{4}$,
\[ \frac{1}{8} \le \frac{Y^{1/2-\beta}}{2\pi} \int_{|v|\le C\log T} \left|L\!\left(\frac{1}{2}+i(\gamma+v)\right)\right| \left|M_X\!\left(\frac{1}{2}+i(\gamma+v)\right)\right| \left|\Gamma(\frac{1}{2}-\beta+iv)\right|\,dv. \]
For $|v|\le C\log T$, since $1/2-\beta\le -1/\log T$, the point $1/2-\beta+iv$ stays a distance $\gg 1/\log T$ from the pole of $\Gamma$ at $0$. As $\Gamma$ has no other poles in the strip $-\frac{1}{2}\le \Real z\le 0$, we have
\[ \Big\vert \Gamma(\frac{1}{2}-\beta+iv) \Big\vert \ll \frac{1}{|\frac{1}{2}-\beta+iv|}\ll \log T. \]
Using also the bound $|M_X(\frac{1}{2}+iu)|\ll X$, we deduce
\[ \frac{1}{8} \ll X Y^{1/2-\beta} (\log T) \int_{\gamma-C\log T}^{\gamma+C\log T} \left|L\!\left(\frac{1}{2}+iu\right)\right|\,du.\]
Since $1/2-\beta\le 1/2-\sigma \le 0$, this yields
\begin{equation}\label{L1-lower} \int_{\gamma-C\log T}^{\gamma+C\log T} \left|L\!\left(\frac{1}{2}+iu\right)\right|\,du \gg X^{-1}Y^{-(1/2-\sigma)}(\log T)^{-1}.
\end{equation}
By Cauchy--Schwarz,
\[ \left( \int_{\gamma-C\log T}^{\gamma+C\log T} \left|L\!\left(\frac{1}{2}+iu\right)\right|\,du \right)^2 \le (2C\log T) \int_{\gamma-C\log T}^{\gamma+C\log T} \left|L\!\left(\frac{1}{2}+iu\right)\right|^2\,du.\]
Combining this with \eqref{L1-lower}, we obtain
\begin{equation}\label{L2-lower} \int_{\gamma-C\log T}^{\gamma+C\log T} \left|L\!\left(\frac{1}{2}+iu\right)\right|^2\,du \gg X^{-2}Y^{-2(1/2-\sigma)}(\log T)^{-3}.
\end{equation}
Define $n(u)$ to be the number of $\rho$ contributing to $N_2$ with $|\gamma - u| \leq C \log T$. Summing \eqref{L2-lower} over $\rho$ contributing to $N_2$ gives
\[ N_2\,X^{-2}Y^{-2(1/2-\sigma)}(\log T)^{-3} \ll \int_{-T-C\log T}^{T+C\log T} \left|L\!\left(\frac{1}{2}+iu\right)\right|^2 n(u)\,du.\]
By \eqref{localzerocount} in Lemma \ref{standardzerocount}, we have $n(u)\ll (\log T)^2$ for all $|u|\le T+C\log T$, and therefore, we get
\[ N_2 \ll X^2Y^{1-2\sigma}(\log T)^5 \int_{-2T}^{2T} \left|L\!\left(\frac{1}{2}+iu\right)\right|^2\,du. \]
Combining with Lemma~\ref{secondmoment} proves the proposition.
\end{proof}

\subsection{Proof of Proposition \ref{n1}} 

The proof proceeds in two stages. Proposition \ref{n1intermediate} first extracts from the Type I sum in \eqref{exps} a single dyadic block
\[ \sum_{n \sim N} d_n n^{-\rho} \]
which is large for many zeros contributing to $N_1$. Here, Lemma \ref{typeI-tail} is used to truncate the original Type~I sum at $n \leq CY \log T$, while Lemma \ref{Cn-bound} provides the required $q$th-moment control.

We pass from large values at varying points $\rho=\beta+i\gamma$ to large values on the fixed vertical line $\Real s=\sigma$. By inserting a smooth weight and applying Fourier inversion, the displacement from $\beta$ to $\sigma$ is replaced by an average over nearby shifts in the ordinate, one of which must still give a large value on $\Real s= \sigma$. After a harmless renormalization of the coefficients, this produces a fixed Dirichlet polynomial
\[ D(t)=\sum_{n \sim N}c_n n^{it} \]
which is large on a $1$-separated set of ordinates.

Recall that the coefficients $C_n$ are defined by 
\begin{equation}\label{Cndef} L(s) M_X(s) = \sum_{n=1}^\infty C_n n^{-s} \ \Rightarrow \ C_n = \sum_{\substack{d \mid n \\ d \le X}} b_d a_{n/d}. \end{equation}

\begin{lem}\label{Cn-bound} The coefficients $C_n$ satisfy
\[ \sum_{n \sim N} |C_n|^q \ll N^{1 + o(1)}. \]
\end{lem}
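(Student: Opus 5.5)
The plan is to bound $|C_n|^q$ pointwise by Hölder's inequality applied to the divisor sum \eqref{Cndef}, and then sum over $n \sim N$ by interchanging the order of summation. For each $n$, the number of divisors $d \mid n$ with $d \le X$ is at most $\tau(n) \ll n^{o(1)}$. Hölder's inequality therefore gives
\[ |C_n|^q \le \tau(n)^{q-1} \sum_{\substack{d\mid n\\ d\le X}} |b_d|^q |a_{n/d}|^q \ll N^{o(1)} \sum_{\substack{d\mid n\\ d\le X}} |b_d|^q |a_{n/d}|^q \qquad (n\sim N). \]
Summing over $n\sim N$ and writing $n = dm$, we obtain
\[ \sum_{n\sim N}|C_n|^q \ll N^{o(1)} \sum_{d\le X} |b_d|^q \sum_{N/d < m \le 2N/d} |a_m|^q \ll N^{o(1)} \sum_{d\le X} |b_d|^q \Bigl(\frac{N}{d}+1\Bigr)^{1+o(1)}, \]
where the inner sum is bounded by the $q$th-moment hypothesis \eqref{qmoment}. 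The "$+1$" accounts for the case where the range of $m$ is short. In that case the sum contains $O(1)$ terms, each of which is $\ll m^{1+o(1)}$ by \eqref{qmoment}, so the bound still holds.

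It remains to control $\sum_{d \le X} |b_d|^q\, d^{-1}$ together with the $+1$ term. Decompose $d$ dyadically into ranges $d\sim D$ and apply Lemma \ref{bn-bound}, which gives $\sum_{d\sim D}|b_d|^q \ll D^{1+o(1)}$. Each dyadic block then contributes $\ll D^{o(1)}$ to $\sum_{d\le X} |b_d|^q/d$, and there are $O(\log X)$ blocks, so this sum is $\ll X^{o(1)}$.

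The main point to check is that the $o(1)$ losses are uniform, since $X = T^\eta$ is not bounded in terms of $N$. The situation divides into two cases. If $N \ge X$, then $X^{o(1)} \le N^{o(1)}$ and $N/d \ge 1$, so the total is $N^{1+o(1)}$. If $N < X$, then only $d \le 2N$ can divide some $n\sim N$, so we may replace $X$ by $\min(X,2N)$ throughout. The $+1$ terms then contribute at most $\sum_{d\le 2N}|b_d|^q \ll N^{1+o(1)}$, again by Lemma \ref{bn-bound}. In both cases we get $\sum_{n\sim N}|C_n|^q \ll N^{1+o(1)}$, uniformly in $X$.
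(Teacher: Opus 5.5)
Your proof is correct and is essentially the paper's argument. Both use pointwise H\"older with the divisor bound, interchange the summation via $n=dm$, apply the $q$th-moment hypothesis to the inner sum, and apply Lemma~\ref{bn-bound} to the outer sum. The only difference is in handling uniformity in $X$: the paper drops the constraint $d\le X$ outright (any $d\mid n$ with $n\le 2N$ already has $d\le 2N$), splits off the range $d\sim N$ where only $m=1$ occurs, and uses the convergence of $\sum_d |b_d|^q d^{-1-\delta}$ from the proof of Lemma~\ref{bn-bound}. This makes your case split $N\ge X$ versus $N<X$ unnecessary, though harmless.
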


\begin{proof}
By \eqref{Cndef} and H\"older's inequality, we have
\[ |C_n|^q \le \tau(n)^{q-1} \sum_{\substack{d \mid n \\ d \le X}} |b_d|^q |a_{n/d}|^q. \] Summing over $n \sim N$ and using $\tau(n)^{q-1}\ll_{\delta,q}n^\delta$, the moment bound \eqref{qmoment} for $a_n$, and Lemma \ref{bn-bound} for $b_n$, we obtain
\begin{align*} \sum_{n \sim N}|C_n|^q &\ll_{\delta,q} N^\delta \sum_{d\le N}|b_d|^q \sum_{N/d<m\le2N/d}|a_m|^q + N^\delta\sum_{d \sim N}|b_d|^q \\
&\ll_{\delta,q} N^{1+2\delta} \sum_{d\le N}\frac{|b_d|^q}{d^{1+\delta}} + N^\delta\sum_{d \sim N}|b_d|^q \\ &\ll_{\delta,q} N^{1+2\delta}. \end{align*}
Since $\delta > 0$ is arbitrary, the result for $C_n$ follows.
\end{proof}

\begin{lem}\label{typeI-tail}
Let $\frac{1}{2}\le \sigma\le 1$ and suppose that $Y\le T^6$. Then for every $A>0$, there exists a constant $C\geq 2$ such that
\[ \sum_{n>CY\log T}|C_n|n^{-\sigma}e^{-n/Y}\ll_{A} T^{-A}. \]
In particular, for $T \geq 2$,
\[ \sum_{n>CY\log T}|C_n|n^{-\sigma}e^{-n/Y}\le \frac{1}{8}. \]
\end{lem}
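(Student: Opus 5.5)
The plan is to bound the tail by dyadic decomposition. We control the coefficients with Lemma \ref{Cn-bound} and use the exponential decay of $e^{-n/Y}$ to beat the polynomial growth. Since $q\ge 2\ge 1$, H\"older's inequality together with Lemma \ref{Cn-bound} gives the first-moment bound
\[ \sum_{n\sim N}|C_n| \le N^{1-1/q}\Bigl(\sum_{n\sim N}|C_n|^q\Bigr)^{1/q}\ll N^{1+o(1)}. \]
Using this and $n^{-\sigma}\le 1$, a dyadic block $n\sim N$ with $N\ge CY\log T/2$ contributes at most
\[ \sum_{n\sim N}|C_n|n^{-\sigma}e^{-n/Y}\ll N^{1+o(1)}e^{-N/Y}. \]

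Next I would write $N = 2^j N_0$ with $N_0 \asymp CY\log T$ and sum over $j\ge 0$. The block ratios $e^{-N/Y}$ decay doubly-exponentially in $j$, so the sum is dominated by its first terms. To handle the $N^{o(1)}$ factor, fix $\delta=1$, say, so that $N^{1+o(1)}\ll N^{2}$. Then the total is
\[ \ll \sum_{j\ge0}(2^jN_0)^2 e^{-2^jN_0/Y}\ll N_0^2 e^{-N_0/Y}, \]
valid once $N_0/Y\ge C\log T/2$ is large. Here I use $(2^{j}N_0)^2e^{-2^jN_0/Y}\le N_0^2e^{-N_0/Y}\cdot 4^je^{-(2^j-1)C\log T/2}$, which is summable with bounded total once $C\log T\ge 2$.

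Finally, insert $N_0\asymp CY\log T$ and $Y\le T^6$. This gives $N_0^2\ll C^2T^{12}(\log T)^2$ and $e^{-N_0/Y}\le T^{-C/2}$. Choosing $C\ge 2(A+13)$ yields the bound $\ll_A T^{-A}$. For the second statement, take $A=1$; the bound is then $\le 1/8$ for $T\ge T_0(A)$. For the finitely many bounded ranges $2\le T<T_0$, enlarge $C$ further, which is allowed since the tail sum decreases in $C$ and tends to $0$ as $C\to\infty$ uniformly in the compact range of $T$ and $Y$.

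The only mild obstacle is uniformity in $Y$ as well as $T$. The implied constants must not depend on $Y$, which is ensured because everything is expressed through $N\ge Y$ and the single inequality $N^{1+o(1)}\ll N^2$ with an absolute constant coming from Lemma \ref{Cn-bound}. The other point to check is the small-$T$ case in the "in particular" statement, handled above by monotonicity in $C$. Note also that when $Y$ is small, for example $Y\ge 4$ fixed, the lower cutoff $CY\log T$ still exceeds a fixed multiple of $Y$, so the argument goes through unchanged.
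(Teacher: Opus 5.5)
Your proof is correct and follows the paper's argument: dyadic blocks above $CY\log T$, H\"older's inequality with Lemma~\ref{Cn-bound}, the decay $e^{-K/Y}\le T^{-C2^j}$ beating a polynomial factor controlled by $Y\le T^6$, and then $C$ chosen large in terms of $A$. The differences are cosmetic. You discard $n^{-\sigma}$ and use the cruder bound $N^{1+o(1)}\ll N^2$, so you need $C\ge 2(A+13)$ rather than roughly $C>A+3$. Also, your explicit bound $\ll C^2(\log T)^2T^{12-C/2}$ already yields the $\le \frac18$ statement uniformly for all $T\ge 2$ once $C$ is large, so the separate monotonicity argument for bounded $T$ is unnecessary, though harmless.
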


\begin{proof}
Writing $M:=CY\log T$, we decompose $n > M$ into dyadic intervals $(K, 2K]$. By H\"older's inequality and Lemma \ref{Cn-bound},
\begin{align*}
\sum_{n \sim K}|C_n|n^{-\sigma}e^{-n/Y} &\le K^{-\sigma}e^{-K/Y} \left(\sum_{n \sim K}|C_n|^q\right)^{1/q} K^{1-1/q} \\
&\ll K^{1-\sigma+o(1)}e^{-K/Y}.
\end{align*}
Since $\sigma\ge\frac{1}{2}$, this is $ \ll K^{1/2+o(1)}e^{-K/Y}$. Therefore, \[ \sum_{n>M}|C_n|n^{-\sigma}e^{-n/Y} \ll \sum_{j\ge0} (2^jM)^{1/2+o(1)} e^{-2^jM/Y} = \sum_{j\ge0} (2^jM)^{1/2+o(1)} T^{-C 2^j} \ll M^{1/2+o(1)} T^{-C}. \]
Since $Y\le T^6$, we have $M^{1/2+o(1)} \lessapprox_C T^{3}$ and, hence,
\[ \sum_{n>M}|C_n|n^{-\sigma}e^{-n/Y} \lessapprox_{C} T^{-C+3}. \]
Choosing $C$ sufficiently large in terms of $A$ proves the first claim. Enlarging $C$ further if necessary establishes the second.
\end{proof}

\begin{prop}\label{n1intermediate}
Suppose that $T\le Y\le T^6$, $X=T^\eta$ with $0<\eta<1/2$, and let $N_1$ denote the Type I count defined above. Then, there exist a constant $C \geq 2$, an integer $N$ with \[
\lfloor Y^{1/2}\rfloor\le N\le CY\log T, \] coefficients $d_n$, supported on $n \sim N$ with
\[ \sum_{n \sim N} |d_n|^q \ll_{\eta} N^{1 + o_\eta(1)} \]
and a set $\mathcal S$ of distinct  zeros $\rho=\beta+i\gamma$ of $L(s)$ with $\beta\ge \sigma,$ and $|\gamma|\le T$ whose ordinates are $1$-separated such that 
\[ N_1\ll_\eta |\mathcal S|(\log T)^2 \]
and
\[ \left|\sum_{n \sim N} d_n n^{-\rho}\right| \geq \frac{1}{\log T} \]
for all $\rho \in \mathcal{S}$.
\end{prop}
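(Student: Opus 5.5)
We take the zeros counted by $N_1$, pass to a subset with well-separated ordinates, and then pigeonhole over dyadic blocks of the Type I sum.

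\emph{Separation.} By the multiplicity bound and \eqref{localzerocount}, each unit interval of ordinates contains $O(\log T)$ distinct zeros. Partition $[-T,T]$ into unit intervals and keep at most one zero from each. Then keep only every other interval, so the surviving ordinates are $1$-separated. This gives a set $\mathcal S_0$ with $N_1\ll |\mathcal S_0|\log T$.

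\emph{Truncation and pigeonholing.} For $\rho\in\mathcal S_0$ we have $|\sum_{n>X}C_n n^{-\rho}e^{-n/Y}|\ge \frac14$. Since $\beta\ge\sigma\geq \frac12$, Lemma \ref{typeI-tail} bounds the tail $n>CY\log T$ by $\frac18$. Hence
\[ \Bigl|\sum_{X<n\le CY\log T}C_n n^{-\rho}e^{-n/Y}\Bigr|\ge \frac18. \]
The range $X<n<\lfloor Y^{1/2}\rfloor$ must also be absorbed, since the statement requires $N\ge \lfloor Y^{1/2}\rfloor$. The natural device is Heath-Brown's: handle short blocks by raising them to a power. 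If $\sum_{n\sim K}$ is large with $K<Y^{1/2}$, take the $k$-th power with $k=\lceil \log Y^{1/2}/\log K\rceil$ (bounded, because $K>X=T^\eta$ and $Y\le T^6$, so $k\le 3/\eta+1$). This yields a Dirichlet polynomial of length $K^k\in[Y^{1/2},Y]$, roughly, which remains large, with value $\gg (\log T)^{-k}$ rescaled. Its coefficients are $k$-fold Dirichlet convolutions of $C_n e^{-n/Y}\mathbbm 1_{n\sim K}$. Their $q$th moment is controlled exactly as in Lemma \ref{Cn-bound}: apply H\"older with $\tau_k(n)^{q-1}\ll n^{o(1)}$ to get $N^{1+o_\eta(1)}$, the dependence on $\eta$ entering through $k$. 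Finally, split the product support (contained in $[K^k,2^kK^k]$) into $O_\eta(1)$ dyadic pieces $n\sim N$ and pigeonhole again.

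For blocks with $K\ge Y^{1/2}$, there are $O(\log T)$ dyadic $K$. Pigeonholing gives a block of size $\ge 1/(8\log T)\cdot$const for $\gg |\mathcal S_0|/\log T$ zeros. We set $d_n=C_ne^{-n/Y}$ there, using $e^{-n/Y}\le1$ and Lemma \ref{Cn-bound}.

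\emph{Conclusion.} Pigeonholing over the finitely many choices (the block $K$, the case, the dyadic piece) yields a single $N$ and a subset $\mathcal S\subseteq\mathcal S_0$ with $|\mathcal S_0|\ll_\eta |\mathcal S|\log T$, whence $N_1\ll_\eta|\mathcal S|(\log T)^2$. The large-value threshold $1/\log T$ is met after multiplying $d_n$ by a constant, or by $(\log T)^{O_\eta(1)}$ in the power case; this is harmless since the moment bound allows an $N^{o(1)}$ loss.

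\emph{Main obstacle.} I expect the main obstacle to be the short-block power step: keeping the threshold at $1/\log T$ rather than $(\log T)^{-k}$, and verifying the $q$th moment of the convolution powers. For the threshold, the plan is to absorb $(\log T)^{k}$ into the coefficients, which is permissible under the $N^{o_\eta(1)}$ moment allowance.
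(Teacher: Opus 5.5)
Your proof is correct. It differs from the paper's only in how the short part of the Type I sum is brought up to length $Y^{1/2}$.

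\textbf{The paper's route (Jutila).} The paper splits $(X,CY\log T]$ into $m=O_\eta(1)$ ranges $I_1=(Y^{1/2},CY\log T]$, $I_r=(Y^{1/(r+1)},Y^{1/r}]$ and $I_m=(X,Y^{1/m}]$. It pigeonholes among these first, and raises the chosen $\Sigma_\ell$ to the $\ell$-th power while it is still $\gg_\eta 1$. The product support then lies automatically in $(Y^{1/2},CY\log T]$. Only afterwards does it split dyadically. A single $\log T$ is lost there, and the threshold $1/\log T$ is reached by a constant rescaling.

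\textbf{Your route (Heath-Brown's order).} You pigeonhole over $O(\log T)$ dyadic blocks first and then raise a short block to a power $k=O_\eta(1)$. The lower bound therefore degrades to $(\log T)^{-k}$, and you must renormalize the coefficients by $(\log T)^{O_\eta(1)}$. This is legitimate because $N\ge\lfloor Y^{1/2}\rfloor\gg T^{1/2}$, so $(\log T)^{O_\eta(1)}=N^{o_\eta(1)}$; the paper uses the same device at the end of Proposition~\ref{n1}.

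\textbf{What each buys.} In exchange for the extra rescaling, your moment bound is slightly simpler. For a single dyadic block, the $k$-fold convolution of $v_m=|u_m|^q$ has total mass
\[
\Bigl(\sum_{m\sim K}|u_m|^q\Bigr)^k\ll K^{k(1+o(1))}\le N^{1+o(1)},
\]
since $N\ge K^k$. The paper instead needs the fixed-convolution estimate over partial sums, because $I_\ell$ is not dyadic. Doing the $1$-separation first rather than last is immaterial, and both routes lose exactly $(\log T)^2$.

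\textbf{One detail to state.} Your product support extends to $2^kK^k<2^kY$, so the requirement $N\le CY\log T$ uses $2^{k}\le C\log T$. This holds once $T$ is large in terms of $\eta$, but you should say so explicitly.
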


\begin{proof}
Let $\mathcal Z_1$ be the set of distinct zeros $\rho=\beta+i\gamma$ counted by $N_1$. From Lemma \ref{typeI-tail}, there is a constant $C \geq 2$ such that
\[ \left|\sum_{X<n\le CY\log T} C_n n^{-\rho}e^{-n/Y}\right|\ge \frac{1}{8} \]
for all $\rho \in \mathcal{Z}_1$. Following Jutila \cite{Jutila}, we first force the relevant range to begin at $Y^{1/2}$. Since $X = T^\eta < T^{1/2} \leq Y^{1/2}$, we let $m \geq 2$ be the largest integer such that $Y^{1/m}>X$ and define
\[ I_1=(Y^{1/2},CY\log T],\qquad I_r=(Y^{1/(r+1)},Y^{1/r}] \quad \forall \ 2\leq r\leq m-1, \qquad I_m=(X,Y^{1/m}]. \]
Since $X=T^\eta$ and $Y\leq T^6$, we have $m<6/\eta$ and hence $m = O_\eta(1)$. We write
\[ \Sigma_r(\rho):=\sum_{n\in I_r}C_n n^{-\rho}e^{-n/Y}, \qquad  \sum_{X<n\leq CY\log T}C_n n^{-\rho}e^{-n/Y} = \sum_{r=1}^m\Sigma_r(\rho). \]
Moreover, there exists $\ell(\rho)$ such that 
\[ |\Sigma_{\ell(\rho)}(\rho)| \geq \frac{1}{8m}.\] It follows that there exist $\ell\in\{1,\dots,m\}$ and a subset $\mathcal E\subseteq\mathcal Z_1$ such that $|\mathcal E|\gg_\eta N_1$ and $|\Sigma_{\ell}(\rho)| \gg_\eta 1$ for all $\rho \in \mathcal E$. Now, let $u_n:=C_ne^{-n/Y}\mathbbm 1_{I_\ell}(n)$ and define
\[ d_n:=\sum_{n_1\cdots n_\ell=n}u_{n_1}\cdots u_{n_\ell}, \qquad \Sigma_\ell(\rho)^\ell=\sum_n d_n n^{-\rho}. \]
The definition of $I_\ell$ shows that $d_n=0$ unless $Y^{1/2}<n\leq CY\log T$. Hence
\[ \left| \sum_{Y^{1/2}<n\leq CY\log T}d_n n^{-\rho} \right| = \vert \Sigma_\ell(\rho) \vert^\ell \gg_\eta1 \]
for all $\rho \in \mathcal E$.  Partition this range into $J\ll\log T$ dyadic intervals. A second application of the pigeonhole principle gives an integer $N$ with $\lfloor Y^{1/2}\rfloor\leq N\leq CY\log T$ and a subset $\mathcal E'\subseteq\mathcal E$ such that
\[ |\mathcal E'| \gg \frac{|\mathcal E|}{\log T} \gg_\eta\frac{N_1}{\log T}, \qquad \left|\sum_{n\sim N}d_n n^{-\rho}\right| \gg_\eta\frac{1}{\log T} \]
for all $\rho \in \mathcal E'$.
It remains to control the coefficients $d_n$. Set $v_n = |u_n|^q$. By Lemma \ref{Cn-bound} and dyadic summation,
\[ \sum_{n\le x}|u_n|^q = \sum_{n \le x} v_n \ll x^{1+o(1)}. \]
Hence, by H\"older's inequality,
\[ |d_n|^q \le \tau_\ell(n)^{q-1} \sum_{n_1\cdots n_\ell=n} |u_{n_1}\cdots u_{n_\ell}|^q \ll_{\eta} n^{o_\eta(1)} (v^{(*\ell)})(n), \]
where $v^{(*\ell)}$ denotes the $\ell$-fold Dirichlet convolution of $v$. Since $\ell=O_\eta(1)$, the standard fixed-convolution estimate gives
\[ \sum_{n\le x}(v^{(*\ell)})(n) \ll_{\eta}x^{1+o_\eta(1)}.\]
Therefore,
\[ \sum_{n \sim N}|d_n|^q \ll_{\eta}N^{1+o_\eta(1)}.\]
Restricting $d_n$ to $n\sim N$ and setting $d_n = 0$ otherwise and rescaling by a constant depending only on $\eta$, we may assume that
\[ \left|\sum_{n\sim N}d_n n^{-\rho}\right| \geq\frac{1}{\log T} \]
for all $\rho \in \mathcal E'$ while retaining
\[ \sum_{n\sim N}|d_n|^q\ll_\eta N^{1+o_\eta(1)}. \]
Finally, choose a maximal subset $\mathcal S\subseteq \mathcal E'$ such that $|\Ima\rho-\Ima\rho'|\ge 1$ for all $\rho\neq \rho'$ in $\mathcal S$. By \eqref{localzerocount} in Lemma \ref{standardzerocount}, each interval of length $2$ contains $O(\log T)$ zeros, so
\[ |\mathcal E'|\ll |\mathcal S|\log T. \]
Since $|\mathcal E'|\gg_\eta N_1/\log T$, we obtain
\[ N_1\ll_\eta |\mathcal S|(\log T)^2.\]
\end{proof}

\begin{proof}[Proof of Proposition \ref{n1}]

It remains to transfer the large values from the zeros  $\rho=\beta+i\gamma$ to the fixed line $\Real s=\sigma$. Let $N$, $(d_n)$, and $\mathcal S$ be given by Proposition \ref{n1intermediate}. Write
\[ P(s):=\sum_{n \sim N} d_n n^{-s} \]
so that
\[ N_1\ll_\eta |\mathcal S|(\log T)^2, \qquad |P(\rho)|\geq \frac{1}{\log T} \]
for all $\rho \in \mathcal S$. Choose a smooth function $w_0 \in C_c^\infty((0, \infty))$ supported on $[1/2,4]$ and equal to $1$ on $[1,2]$. For $\rho=\beta+i\gamma\in\mathcal S$, define
\[ \psi_\rho(u):=e^{-(\beta-\sigma)(u-\log N)}\,w_0(e^u/N). \]
Since $\beta\in[\sigma,1]$, we have $\beta-\sigma\in[0,1]$; it follows by repeated differentiation that for each $j\ge 0$ one has $\|\psi_\rho^{(j)}\|_\infty \ll_j 1$ uniformly in $\rho$. On the interval $u\in[\log N,\log 2N]$ we have
\[ \psi_\rho(u)=e^{-(\beta-\sigma)(u-\log N)}. \]
Hence, for $\rho=\beta+i\gamma\in\mathcal S$,
\[ P(\rho) = \sum_{n \sim N} d_n n^{-\beta-i\gamma} = N^{\sigma-\beta} \sum_{n \sim N} d_n n^{-\sigma-i\gamma}\psi_\rho(\log n). \]
By Fourier inversion,
\[ \sum_{n \sim N} d_n n^{-\sigma-i\gamma}\psi_\rho(\log n) = \int_{\RR}\widehat{\psi_\rho}(\xi)\, P\bigl(\sigma+i(\gamma-2\pi\xi)\bigr)\,d\xi. \]
The Fourier transforms $\widehat{\psi_\rho}$ decay rapidly, uniformly in $\rho$. Moreover, by H\"older's inequality and the coefficient moment estimate,
\begin{align*}
|P(\sigma+iu)| &\leq N^{-\sigma} \left(\sum_{n \sim N}|d_n|^q\right)^{1/q} N^{1-1/q} \ll_{\eta} N^{1-\sigma + o_\eta(1)} \ll_\eta T^{O(1)}
\end{align*}
uniformly in $u$. Fix $0 < \delta < 1$. The rapid decay of $\widehat{\psi_\rho}$, together with the preceding polynomial bound for $P(\sigma + iu)$, allows the Fourier integral to be truncated at $|\xi|\leq T^\delta$ with error $O_{\eta, \delta}(T^{-100})$. Hence,
\[ \left| \int_{|\xi|\leq T^\delta} \widehat{\psi_\rho}(\xi)\, P\bigl(\sigma+i(\gamma-2\pi\xi)\bigr)\,d\xi \right| \gg \frac{N^{\beta-\sigma}}{\log T} \]
for all sufficiently large $T$. Since $\|\widehat{\psi_\rho}\|_{L^1(\RR)}\ll1$ uniformly in $\rho$, there is an absolute constant $c_0>0$ such that, for each $\rho\in\mathcal S$, one may choose $|\xi_\rho|\leq T^\delta$ with
\[ \left| P\bigl(\sigma+i(\gamma-2\pi\xi_\rho)\bigr) \right| \geq c_0\frac{N^{\beta-\sigma}}{\log T}. \]
Set $t_\rho:=-(\gamma-2\pi\xi_\rho)$. Since $|\xi_\rho|\leq T^\delta$ and $|\gamma|\leq T$, we have $|t_\rho|\leq2T$ for all large $T$. Define
\[ c_n^{(0)}:=d_n\Big(\frac{N}{n}\Big)^\sigma \]
for $n\sim N$ and
\[ D^*(t):=\sum_{n\sim N}c_n^{(0)}n^{it} =N^\sigma P(\sigma-it). \]
Since $n\sim N$,
\[ \sum_{n\sim N}|c_n^{(0)}|^q \leq \sum_{n\sim N}|d_n|^q \ll_\eta N^{1+o_\eta(1)}. \]
Moreover, for every $\rho\in\mathcal S$,
\[ |D^*(t_\rho)| = N^\sigma \left|P\bigl(\sigma+i(\gamma-2\pi\xi_\rho)\bigr)\right| \geq c_0\frac{N^\beta}{\log T} \geq c_0\frac{N^\sigma}{\log T}. \]
Set $c_n:=c_0^{-1}c_n^{(0)}$ and
\[ D(t):=\sum_{n\sim N}c_n n^{it}, \qquad \sum_{n\sim N}|c_n|^q = c_0^{-q}\sum_{n \sim N}|c_n^{(0)}|^q \ll_{\eta}N^{1+o_\eta(1)}. \]
Writing $\mathcal T:=\{t_\rho:\rho\in\mathcal S\} \subseteq[-2T,2T]$, we have
\[ |D(t)|\geq\frac{N^\sigma}{\log T} \]
for all $t \in \mathcal{T}$. Choose a maximal $1$-separated subset $W_0\subseteq\mathcal T$. Since $|t_\rho+\gamma|\ll T^\delta$ and the ordinates in $\mathcal S$ are $1$-separated, each point of $W_0$ corresponds to at most $O(T^\delta)$ elements of $\mathcal S$. Hence, $|\mathcal S|\ll T^\delta|W_0|$
and therefore
\begin{equation}\label{prevestimate} N_1\ll_{\eta,\delta}T^\delta|W_0|(\log T)^2. \end{equation}
Partition $W_0$ according to the four intervals $[-2T,-T]$, $[-T,0]$, $[0,T]$, and $[T,2T]$, and keep the part $W$ of maximal cardinality. Thus, $|W_0|\ll|W|$. If $W$ lies in a negative interval, replace $W$ by $-W$ and $c_n$ by $\overline{c_n}$; if the resulting set lies in $[T,2T]$, replace $W$ by $W-T$ and $c_n$ by $c_n n^{iT}$. These operations preserve the coefficient magnitudes and the large-values condition. Hence, we may assume that $W\subseteq[0,T]$. Since $\delta > 0$ is arbitrary, \eqref{prevestimate} gives \[ N_1 \lessapprox_\eta |W|. \]
Finally, since $Y\geq T$ and $N\geq\lfloor Y^{1/2}\rfloor\gg T^{1/2}$, we may replace $c_n$ by $(\log T)c_n$ while retaining the $q$th moment bound. After relabeling the coefficients, we have $|D(t)|\geq N^\sigma$ for all $t \in W$. \end{proof}

\section{Proof of Theorem \ref{momentGM}}\label{mainprop}

We follow the argument of Guth and Maynard \cite{GM}, keeping track of the dependence on the size of the coefficients after decomposing them into dyadic size classes. Let $w: \RR \to [0,1]$ be a fixed smooth function supported on $[1,2]$ with $\Vert w^{(j)} \Vert_\infty = O_j(1)$ for all $j \in \ZZ_{\geq 0}$ and with $w(t) = 1$ for $t \in [6/5, 9/5]$.
We first reduce Theorem \ref{momentGM} to a local estimate at the scale $T = N^{6/5}$, inserting the smooth weight $w$ in the $n$ variable, following \cite[Section~3]{GM}. This is the natural local scale at which the two final terms in Theorem \ref{momentGM} balance.

\begin{prop}\label{momentGM-local}
Let $q\geq 8/3$, $\varepsilon > 0$, and $7/10 \leq \sigma \leq 4/5$. Suppose that $(c_n)_{n \sim N} \subset \CC$ satisfies
\[ \sum_{n\sim N}|c_n|^q\ll N^{1+o(1)}. \]
Let $W$ be a $T^\varepsilon$-separated set contained in an interval of length $T = N^{6/5}$, and suppose that
\[ \left| \sum_{n\geq 1} w\left(\frac{n}{N}\right)c_n n^{it} \right| \geq N^\sigma \]
for every $t \in W$. Then, we have
\[ |W| \lessapprox_\varepsilon  TN^{(12-20\sigma)/5}. \]
\end{prop}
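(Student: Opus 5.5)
The plan is to decompose the coefficients dyadically by size and run the Guth--Maynard argument of \cite[Sections~3--8]{GM} on each size class, keeping track of the size parameter $B$ throughout. Write $\mathcal I_B=\{n\sim N: B<|c_n|\le 2B\}$ for dyadic $B$. Coefficients with $|c_n|\le N^{-10}$ contribute negligibly. By H\"older and the moment bound, $|c_n|\le N^{1/q+o(1)}$, so there are $O(\log N)$ classes. By pigeonholing we fix $B$ and $W_B\subseteq W$ with $|W|\lessapprox|W_B|$ and $|\sum_{n\in\mathcal I_B}w(n/N)c_nn^{it}|\gtrapprox N^\sigma$ on $W_B$. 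The moment hypothesis gives $|\mathcal I_B|\ll N^{1+o(1)}B^{-q}$ and $\|\mathbf c_B\|_2^2\ll N^{1+o(1)}B^{2-q}$. Setting $\widetilde c_n=c_n/B$, we obtain a $1$-bounded polynomial supported on $\mathcal I_B$ that is at least $N^\sigma/B$ in size on $W_B$. If $B\le N^{o(1)}$, \cite[Proposition~3.1 / Theorem~1.1]{GM} applies directly and gives $|W_B|\lessapprox TN^{(12-20\sigma)/5}$. So the content of the proof is the range $B\ge N^{\delta}$.

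For large $B$, we follow Guth--Maynard's linear-algebra reduction. With $M_{t,n}=w(n/N)n^{it}$ for $t\in W_B$, the bound
\[ N^{2\sigma}|W_B|\lessapprox \|M\|^2\|\mathbf c_B\|_2^2 \]
shows that bounding the relevant singular values suffices. The refined version in \cite{GM} bounds $s_1$ via $\tr((MM^*)^3)$ restricted to the large-value set. Expanding $\tr((MM^*)^3)$ and applying Poisson summation in the three $n$-variables, we split the off-diagonal terms into $S_1+S_2+S_3$ according to the number of nonzero frequencies. The term $S_1$ is negligible because $W_B$ is $T^\varepsilon$-separated, and $S_2$ is handled exactly as in \cite{GM} via Heath-Brown's difference-set bound. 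Neither step sees the coefficients, so the $B$-saving from $\|\mathbf c_B\|_2^2$ carries through.

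The main obstacle is $S_3$. Guth--Maynard control $S_3$ through the additive energy $E(W)$ of the large-value set, which they bound by feeding in the large-values property itself: they use $\sum_{t}|D(t)|^2$, the Montgomery--Huxley type estimates (Proposition~\ref{original}), and a bound for $\int|R(x)|^2$ with $R(x)=\sum_{t\in W_B}x^{it}$. Here the threshold is only $N^\sigma/B$ for the normalized polynomial. I would redo the energy and $R$-estimates with threshold $V=N^\sigma/B$ and track the resulting positive power of $B$. The heuristic in the overview suggests this loss is at most $B^{2/3}$ after combining terms, while the $\ell^2$ step saves $B^{-(q-2)}$. The requirement $B^{2/3}\cdot B^{-(q-2)}\le 1$ is exactly $q\ge 8/3$, which matches the hypothesis, and it makes the final bound nonincreasing in $B$. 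I would first attempt this by writing the bound for $|W_B|$ as a maximum of monomials $N^{a}T^{b}B^{c}$, one from each case of the GM argument ($S_2$-dominant, $S_3$-dominant, diagonal). I would then check that every exponent $c$ is at most $0$ when $q\ge 8/3$ and $T=N^{6/5}$, $7/10\le\sigma\le 4/5$. At $B=1$ each monomial is at most $TN^{(12-20\sigma)/5}$, which yields the proposition.
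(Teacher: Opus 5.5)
Your plan matches the paper's proof. The paper likewise splits into size classes $\mathcal I_B$, gains $B^{2-q}$ from $\|\mathbf c_B\|_2^2$ in Lemma~\ref{initial}, and reuses the Guth--Maynard bounds for $S_1$, $S_2$ and Proposition~\ref{S3} unchanged; positive powers of $B$ enter only through the additive energy, and in \eqref{final-W-bound} every power of $B$ is nonpositive, the binding one being $B^{4-3q/2}$, exactly when $q\geq 8/3$. One correction to your description: the energy bound does not go through Montgomery--Huxley or $\int|R|^2$, but through \cite[Lemma~11.4]{GM} applied with threshold $N^\sigma/(2B)$, giving $E(W)\lessapprox B^2N^{-2\sigma}\sum_{n_1,n_2\sim N}|R(n_1/n_2)|^3$, which is then bounded by splitting at $\gcd(n_1,n_2)=N^2/T$ (Proposition~\ref{EWprop}).
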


\begin{proof}[Proof of Theorem \ref{momentGM} assuming Proposition \ref{momentGM-local}]

By H\"older's inequality and the coefficient moment hypothesis, we have
\[ \sum_{n\sim N}|c_n|^2 \ll N^{1+o(1)}. \]
If $N\ge T$, the mean-value estimate for Dirichlet polynomials gives $|W|\ll N^{2+o(1)}V^{-2}$, which is sufficient for Theorem \ref{momentGM}. We may therefore suppose that $N < T$. In this range, all $N^{o(1)}$ losses may be absorbed into $\lessapprox$. The classical mean-value and Montgomery--Hal\'{a}sz--Huxley estimates therefore give Theorem \ref{momentGM} if $V\leq N^{7/10+o(1)}$  or $V\geq N^{4/5-o(1)}$. We may therefore suppose that $4N^{7/10}\leq V\leq N^{4/5}$, in which case we have that $N^2 V^{-2} \leq N^{18/5} V^{-4}$. 

We now insert the smooth weight. Split $D(t)$ as $D_1(t) + D_2(t)$ where
\[ D_1(t) := \sum_{N < n \leq 3N/2} c_n n^{it} \qquad \text{and} \qquad D_2(t) := \sum_{3N/2 < n \leq 2N} c_n n^{it}. \]
If $|D(t)| \geq V$, then either $|D_1(t)| \geq V/2$ or $|D_2(t)| \geq V/2$. It therefore suffices to bound the corresponding sets of large values separately. For the first sum, let $N^{(1)} = 5N/6$, so that $[\frac{N}{N^{(1)}}, \frac{3N}{2N^{(1)}}] = [\frac{6}{5}, \frac{9}{5}]$. Similarly, let $N^{(2)} = 5N/4$ so that $[\frac{3N}{2N^{(2)}}, \frac{2N}{N^{(2)}}] = [\frac{6}{5}, \frac{8}{5}] \subset [\frac{6}{5}, \frac{9}{5}]$.

Let $c_n^{(j)}$ denote $c_n$ restricted to the support of $D_j$. Since $w(u)=1$ on $[6/5,9/5]$, we have
\[ D_j(t)= \sum_{n\geq1} w\left(\frac{n}{N^{(j)}}\right)c_n^{(j)}n^{it}. \]
Partition $W=W_1\cup W_2$ so that $|D_j(t)|\geq V/2$ for $t\in W_j$, and define
\[ \widetilde c_n^{(j)} := 2\left(\frac{N^{(j)}}{N}\right)^\sigma c_n^{(j)}. \]
\vspace{-2em}

\noindent Then, for $t\in W_j$, we have
\[ \left| \sum_{n \geq 1} w\left(\frac{n}{N^{(j)}}\right) \widetilde c_n^{(j)}n^{it} \right| \geq \left( N^{(j)} \right)^\sigma \ \text{with} \  \sum_{n \geq 1} |\widetilde c_n^{(j)}|^q \ll N^{1+o(1)} \ll \left( N^{(j)} \right)^{1+o(1)}\]
since $N^{(j)}\asymp N$. It therefore suffices to bound each $W_j$ separately. Fixing $j\in\{1,2\}$, we now relabel $N^{(j)}\ \text{as }N, \ \widetilde c_n^{(j)}\ \text{as }c_n,\ \text{and} \ W_j\ \text{as }W.$ Fix $\delta>0$, and choose the separation parameter $\varepsilon>0$ in Proposition \ref{momentGM-local} sufficiently small in terms of $\delta$. There exists a $T^\delta$-separated subset $W'\subseteq W$ such that $|W|\ll T^\delta|W'|$. Suppose that $T\leq N^{6/5}$. We regard $W'$ as contained in an interval of length $N^{6/5}$. Then, Proposition \ref{momentGM-local} gives
\[ |W| \ll T^\delta|W'| \lessapprox_\delta T^\delta N^{18/5-4\sigma}. \]
Now, suppose instead that $T>N^{6/5}$. Partition the interval into $O(T/N^{6/5})$ intervals of length at most $N^{6/5}$, and write $W'_j$ for the points of $W'$ in the $j$-th interval. Applying Proposition \ref{momentGM-local} to each $W'_j$ gives
\[ |W| \ll T^\delta\sum_j|W'_j| \lessapprox_\delta T^{1+\delta}N^{12/5-4\sigma}. \]
Since $\delta$ is arbitrary, this proves the theorem in either case.
\end{proof}

For the remainder of the section, we work under the hypotheses of Proposition \ref{momentGM-local}; in particular, $T = N^{6/5}$. We also suppress dependence on the fixed separation parameter $\varepsilon$ in the notation $\lessapprox$. 

We now perform a dyadic decomposition on the coefficients $c_n$ by size. We separate the coefficients of size at most $1$ by defining
\[ \mathcal I_0:=\{n\sim N:|c_n|\leq 1\}, \qquad D_0(t):= \sum_{n\in\mathcal I_0} w\left(\frac{n}{N}\right)c_n n^{it}. \]
For dyadic $B\geq1$, let
\[ \mathcal I_B:=\{n\sim N:B<|c_n|\leq2B\}, \qquad D_B(t):= \sum_{n\in\mathcal I_B} w\left(\frac{n}{N}\right)c_n n^{it}. \]
Let $\mathbf{c}_B = (c_n \mathbbm{1}_{\mathcal{I}_B}(n))_{n \sim N}$ be the coefficient vector supported on $\mathcal{I}_B$. From the coefficient $q$th-moment hypothesis, we have that 
\begin{equation}\label{IBbound} |\mathcal{I}_B| \ll N^{1+o(1)} B^{-q}, \end{equation}
and hence
\begin{equation}\label{cBbound} \|\mathbf c_B\|_2^2 = \sum_{n\in\mathcal I_B}|c_n|^2 \ll N^{1+o(1)}B^{2-q}.
\end{equation}

Let $\mathcal B$ denote the set of dyadic $B\geq1$ for which $\mathcal I_B\neq\varnothing$, and put $J_B:=1+|\mathcal B|$. Since
\[ \max_{n\sim N}|c_n| \leq \left(\sum_{n\sim N}|c_n|^q\right)^{1/q} \leq N^{1/q+o(1)}, \]
we have $J_B\ll\log N$. For every $t\in W$, the inequality $|D(t)|\geq N^\sigma$ implies that $|D_\nu(t)|\geq\frac{N^\sigma}{J_B}$ for at least one $\nu\in\{0\}\cup\mathcal B$. For $\nu\in\{0\}\cup\mathcal B$, define
\[ W_\nu:= \left\{t\in W: |D_\nu(t)|\geq\frac{N^\sigma} {J_B} \right\}. \]
Then $W=\bigcup_{\nu\in\{0\}\cup\mathcal B}W_\nu$, so there exists $\nu$ such that $|W|\leq J_B|W_\nu|$. If $\nu=0$, the coefficients of $D_0(t)$ are $1$-bounded, and the required estimate follows directly from the large-values theorem of Guth and Maynard, applied with threshold $N^\sigma/J_B$. Since $J_B\ll\log N$, all resulting powers of $J_B$ are absorbed by $\lessapprox$. We may therefore suppose that $\nu=B\in\mathcal B$.

Fix this dyadic block and replace $W$ by $W_B$, which we will relabel as $W$ to simplify notation. Multiplying the coefficients of $D_B$ and the parameter $B$ by $J_B$, and then relabelling, we may assume that
\[ B<|c_n|\leq2B\quad \forall \ n\in\mathcal I_B, \qquad |D_B(t)|\geq N^\sigma\quad \forall \ t\in W. \]
The support $\mathcal I_B$ is unchanged. Since $J_B=N^{o(1)}$, the rescaled coefficients continue to satisfy
\[ \sum_{n\sim N}|c_n|^q\ll N^{1+o(1)}, \]
and the bounds \eqref{IBbound} and \eqref{cBbound} remain valid. Moreover, the dyadic pigeonholing loses only a factor $J_B\ll\log N$, which is absorbed by $\lessapprox$.

\begin{lem}\label{initial}
Let $B$, $D_B$, and $W$ be fixed as above, let $M$ be the $|W| \times N$ matrix with entries
\[ M_{t,n} = w\left(\frac{n}{N}\right)n^{it}, \qquad t\in W,\quad n\sim N \]
and $s_1(M)$ be its largest singular value. Then, we have
\[ |W| \lessapprox N^{1-2\sigma}B^{2-q}s_1(M)^2. \]
\end{lem}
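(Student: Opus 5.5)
The plan is a single linear-algebra step followed by the coefficient bound \eqref{cBbound}. For $t\in W$, the Dirichlet polynomial $D_B(t)$ is exactly the $t$-th entry of $M\mathbf c_B$. Here $\mathbf c_B=(c_n\mathbbm 1_{\mathcal I_B}(n))_{n\sim N}$ is regarded as a column vector indexed by $n\sim N$, and the weight $w(n/N)$ has been absorbed into $M$. Concretely,
\[ (M\mathbf c_B)_t=\sum_{n\sim N} w\left(\frac nN\right)n^{it}c_n\mathbbm 1_{\mathcal I_B}(n)=D_B(t). \]
There is one minor point. The sum defining $D_B$ runs over $n\ge1$ with weight $w(n/N)$ supported on $n\in[N,2N]$, while $\mathcal I_B\subseteq\{n\sim N\}$. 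So the index ranges agree: the endpoint $n=N$ has $w(1)=0$ in any case.

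First, I sum the lower bound $|D_B(t)|\geq N^\sigma$ over $t\in W$. This gives
\[ N^{2\sigma}|W|\leq\sum_{t\in W}|D_B(t)|^2=\|M\mathbf c_B\|_2^2\leq s_1(M)^2\|\mathbf c_B\|_2^2, \]
where the last step is the operator-norm bound $\|M\|_{\ell^2\to\ell^2}=s_1(M)$.

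Second, I insert \eqref{cBbound}, which says $\|\mathbf c_B\|_2^2\ll N^{1+o(1)}B^{2-q}$. This bound remains valid after the $J_B$-rescaling, as noted just before the lemma. Dividing through by $N^{2\sigma}$ then gives
\[ |W|\ll N^{1-2\sigma+o(1)}B^{2-q}s_1(M)^2. \]

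Finally, I absorb the $N^{o(1)}$ factor into $\lessapprox$. This is legitimate because $T=N^{6/5}$, so $N^{o(1)}=T^{o(1)}$. That yields the claimed bound.

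There is no real obstacle here. The only care needed is bookkeeping: after the relabeling, $c_n$ and $B$ both carry the factor $J_B$. Since $B<|c_n|\le 2B$ on $\mathcal I_B$ and the $q$th moment bound $\ll N^{1+o(1)}$ is preserved, the estimate $\sum_{n\in\mathcal I_B}|c_n|^2\le (2B)^{2-q}\cdot 2^{q}\cdots$ should be derived directly. Precisely, for $q\geq 2$ one has $|c_n|^2=|c_n|^q|c_n|^{2-q}\le B^{2-q}|c_n|^q$ on $\mathcal I_B$, because $|c_n|>B$ and $2-q\le0$. Summing over $n\in\mathcal I_B$ gives $\|\mathbf c_B\|_2^2\le B^{2-q}\sum_n|c_n|^q\ll N^{1+o(1)}B^{2-q}$. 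This confirms \eqref{cBbound} in the rescaled normalization, and with it the lemma.
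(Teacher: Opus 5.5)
Your proposal is correct and follows essentially the same route as the paper: identify $D_B(t)=(M\mathbf c_B)_t$, sum $|D_B(t)|^2\ge N^{2\sigma}$ over $W$, bound by $s_1(M)^2\|\mathbf c_B\|_2^2$, and insert \eqref{cBbound}. Your extra check that \eqref{cBbound} survives the $J_B$-rescaling, via $|c_n|^{2-q}\le B^{2-q}$ on $\mathcal I_B$, is a harmless addition.
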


\begin{proof}
Since $D_B(t)=(M\mathbf c_B)_t$ and $|D_B(t)|\geq N^\sigma$ for every $t\in W$, we have
\[ |W|N^{2\sigma} \leq \sum_{t \in W} |D_B(t)|^2 = \|M\mathbf c_B\|_2^2 \leq s_1(M)^2\|\mathbf c_B\|_2^2. \]
The bound \eqref{cBbound} now gives
\[ |W|N^{2\sigma} \lessapprox NB^{2-q}s_1(M)^2, \]
which proves the result.
\end{proof}

As $s_1(M)^2$ is the largest eigenvalue of $MM^*$, we can bound it, for $k \in \NN$, using the inequality $s_1(M)^{2k} \leq \tr ((MM^*)^k)$. In \cite{GM}, Guth and Maynard make two observations. First, sufficiently sharp bounds for $\tr((MM^*)^k)$ appear inaccessible for large $k$, so they work with $k = 3$. Second, to produce a more precise bound, note that by H\"{o}lder's inequality,
\[ \tr((MM^*)^3) \geq \frac{(\tr(MM^*))^3}{|W|^2}, \]
with equality when the eigenvalues of $MM^*$ are all equal. Thus, if
\[ \tr((MM^*)^3) - \frac{(\tr(MM^*))^3}{|W|^2} \]
is small, we can expect most of the contribution to the trace to be coming from other terms, resulting in a tighter bound on the largest singular value. More precisely, \cite[Lemma~4.2]{GM} gives
\begin{equation}\label{s1M} s_1(M) \leq 2\left( \tr((MM^*)^3) - \frac{(\tr(MM^*))^3}{|W|^2} \right)^{1/6} + 2\left(\frac{\tr(MM^*)}{|W|}\right)^{1/2}.
\end{equation}
Now, put $h_t(u):=w(u)^2u^{it}$. The trace calculations of \cite[Lemmas~4.4 and~4.5]{GM}, which depend only on the matrix $M$ and therefore apply to our argument without modification, give
\begin{equation}\label{firsttrace} \tr(MM^*) = N|W|\|w\|_{L^2}^2+O_A(N^{-A}) \end{equation}
and
\begin{equation}\label{thirdtrace} \tr((MM^*)^3) = N^3|W|\|w\|_{L^2}^6 + \sum_{\mathbf m\in\ZZ^3\setminus\{\mathbf0\}}I_{\mathbf m} + O_{A,\varepsilon}(T^{-A}),
\end{equation}
where
\begin{equation}\label{Imdef} I_{\mathbf m} := N^3 \sum_{t_1,t_2,t_3\in W} \widehat h_{t_1-t_2}(m_1N) \widehat h_{t_2-t_3}(m_2N) \widehat h_{t_3-t_1}(m_3N). \end{equation}
Combining these estimates with Lemma \ref{initial} gives the following
proposition.

\begin{prop}\label{keyWestimate} Let $W$ be $T^\varepsilon$-separated, and let $D_B(t)$ be such that the coefficients $c_n$ satisfy $B < |c_n| \leq 2B$ and $|D_B(t)| \geq N^{\sigma}$ for all $t \in W$. Then, we have
\[ |W| \lessapprox B^{2-q} \left( N^{2 - 2 \sigma} + N^{1-2 \sigma} \left( \sum_{\mathbf m \in \ZZ^3 \setminus \{ \mathbf{0} \} } I_{\mathbf m} \right)^{1/3} \right). \]
\end{prop}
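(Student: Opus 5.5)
We combine Lemma~\ref{initial} with the singular value bound \eqref{s1M} and the trace formulas \eqref{firsttrace} and \eqref{thirdtrace}. We treat the two terms of \eqref{s1M} separately: whichever dominates controls $s_1(M)$, up to a constant.

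\emph{First term.} By \eqref{firsttrace}, $\tr(MM^*)/|W| = N\|w\|_{L^2}^2 + O_A(N^{-A}|W|^{-1}) \ll N$, so this term contributes $s_1(M)^2 \ll N$. Inserting this into Lemma~\ref{initial} gives
\[ |W| \lessapprox N^{2-2\sigma}B^{2-q}, \]
which is the first term in the proposition.

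\emph{Second term.} Cubing \eqref{firsttrace}, we have $(\tr(MM^*))^3/|W|^2 = N^3|W|\|w\|_{L^2}^6 + O(N^{2}\cdot N^{-A})$, using $|W|\geq1$. The error is negligible because $|W|$ is at most polynomial in $T$. Subtracting from \eqref{thirdtrace}, the main terms $N^3|W|\|w\|_{L^2}^6$ cancel exactly, leaving
\[ \tr((MM^*)^3) - \frac{(\tr(MM^*))^3}{|W|^2} = \sum_{\mathbf m\neq\mathbf 0} I_{\mathbf m} + O_A(T^{-A}). \]
This left-hand side is nonnegative by Hölder's inequality. The sum $\sum_{\mathbf m\neq 0} I_{\mathbf m}$ is therefore real up to negligible error, and we may interpret the proposition's expression through its absolute value or real part.

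Raising \eqref{s1M} to the second power then gives
\[ s_1(M)^2 \ll N + \Big(\sum_{\mathbf m\neq\mathbf 0} I_{\mathbf m}\Big)^{1/3} + T^{-A}. \]
Substituting this into Lemma~\ref{initial} yields
\[ |W| \lessapprox N^{1-2\sigma}B^{2-q}\Big(N + \Big(\sum_{\mathbf m\neq\mathbf 0} I_{\mathbf m}\Big)^{1/3}\Big) + N^{1-2\sigma}B^{2-q}T^{-A/3}. \]
The last term is negligible, since it is dominated by the first term.

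There is no real obstacle here. The only step needing care is the bookkeeping of the error terms: one must check that $O_A(N^{-A})$ and $O_{A,\varepsilon}(T^{-A})$ are harmless after cubing and after taking the $1/3$ power. One must also check that the exact cancellation of the main terms $N^3|W|\|w\|_{L^2}^6$ holds, which relies on both trace formulas using the same normalization $\|w\|_{L^2}$. All dependence on the coefficients enters only through Lemma~\ref{initial}, via the factor $B^{2-q}$ supplied by \eqref{cBbound}. The matrix $M$ itself is independent of the $c_n$, so the Guth--Maynard trace formulas apply verbatim.
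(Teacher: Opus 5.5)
Your proposal is correct and is essentially the paper's argument: the paper also obtains the proposition by inserting \eqref{s1M}, with the main terms of \eqref{firsttrace} and \eqref{thirdtrace} cancelling, into Lemma~\ref{initial}. Your handling of the error terms and your observation that the trace difference is nonnegative simply spell out details the paper leaves implicit.
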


Following the notation of \cite{GM}, we divide the sum into pieces
\[ \sum_{\mathbf{m} = (m_1, m_2, m_3) \in \ZZ^3 \setminus \{ \mathbf{0} \}} I_{\mathbf{m}} = S_1 + S_2 + S_3 \]
where for $j \in \{ 1, 2, 3 \}$, $S_j$ contains the terms where exactly $j$ of the $m_i$ are non-zero.
We now identify where the modified coefficient hypothesis affects the argument of \cite{GM}. Although the set $W$, and hence the quantities $S_1, S_2, S_3$, arise from the large-value condition for $D_B(t)$, the proofs of the bounds for $S_1$ and $S_2$ use only the spacing property of $W$ and make no further use of the coefficients. These parts of the argument therefore carry over unchanged. The following bound shows that $S_1$ is negligible.

\begin{prop}[{\cite[Proposition 5.1]{GM}}]\label{s1} For any $A > 0$, we have 
\[ S_1 = O_{\varepsilon,A}(T^{-A}). \]
\end{prop}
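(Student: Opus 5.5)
The plan is to show directly that every term in $S_1$ is negligible. The key inputs are that $W$ is $T^\varepsilon$-separated and that $\widehat h_t$ decays rapidly away from the stationary-phase region. In $S_1$ exactly one of $m_1,m_2,m_3$ is nonzero; by symmetry of \eqref{Imdef} under cyclic relabelling of $(t_1,t_2,t_3)$, it suffices to treat $\mathbf m=(m,0,0)$ with $m\neq 0$. The two zero frequencies then carry the factors $\widehat h_{t_2-t_3}(0)$ and $\widehat h_{t_3-t_1}(0)$, and the whole argument rests on estimating $\widehat h_u(0)=\int w(x)^2x^{iu}\,dx$.

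\emph{Step 1: the zero-frequency factor.} Here $x^{iu}=e^{iu\log x}$ has phase derivative $u/x\asymp |u|$ on $[1,2]$, with no stationary point. Integrating by parts repeatedly therefore gives
\[ \widehat h_u(0)\ll_A (1+|u|)^{-A}. \]
For $t_2\neq t_3$ in a $T^\varepsilon$-separated set we have $|t_2-t_3|\ge T^\varepsilon$, so $\widehat h_{t_2-t_3}(0)\ll_{A}T^{-\varepsilon A}$. The same holds for $t_3\neq t_1$.

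\emph{Step 2: the diagonal.} Consequently, up to an error of size $|W|^3 N^3\sup|\widehat h|\,T^{-\varepsilon A}$ per $m$, only the diagonal $t_1=t_2=t_3$ survives. On the diagonal the first factor is $\widehat h_0(mN)=\widehat{w^2}(mN)$, and since $w^2$ is smooth and compactly supported this is $\ll_A (|m|N)^{-A}$. The diagonal terms summed over $m\neq 0$ therefore contribute $\ll |W|N^{3-A}$. Since $|W|\le T$ and $N=T^{5/6}$, this is negligible.

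\emph{Step 3: summing over $m$.} The main obstacle, mild here, is that the off-diagonal error must be summable over all $m\ne0$. We cannot simply bound $|\widehat h_{t_1-t_2}(mN)|$ trivially by $O(1)$, because that gives a divergent sum in $m$. I would handle this with a two-variable integration by parts for $\widehat h_u(\xi)=\int w(x)^2 e^{iu\log x-2\pi i\xi x}\,dx$. The phase derivative is $u/x-2\pi\xi$, and when $|\xi|\ge C(|u|+1)$ it is $\gg|\xi|$, giving $\widehat h_u(\xi)\ll_A|\xi|^{-A}$. Since $|t_1-t_2|\le T=N^{6/5}$, every term with $|m|\ge CN^{1/5}$ is $\ll (|m|N)^{-A}$. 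For the remaining $O(N^{1/5})$ values of $m$ we use the trivial bound together with the $T^{-\varepsilon A}$ saving from Step 1. Combining the three steps, with $|W|^3\le T^3$, gives $S_1\ll_{\varepsilon,A}T^{-A}$ after renaming $A$.
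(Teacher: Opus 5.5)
Your argument is correct and is essentially the proof of \cite[Proposition~5.1]{GM}, which the paper cites without reproducing. The zero-frequency factors $\widehat h_{t_2-t_3}(0)\,\widehat h_{t_3-t_1}(0)$ force $t_1=t_2=t_3$ up to an $O(T^{-\varepsilon A})$ error by $T^\varepsilon$-separation, on the diagonal $\widehat h_0(mN)=\widehat{w^2}(mN)$ is negligible for $m\neq 0$, and your non-stationary-phase bound for $|m|N\gg T$ correctly makes the sum over $m$ converge.
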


The term $S_2$ is controlled by Heath-Brown's large-values estimate on difference sets \cite{HB2}. Indeed, the cubic trace collapses to an expression of the form
\[ \sum_{t_1,t_2\in W} \left| \sum_{m\neq 0}\widehat h_{t_1-t_2}(mN) \right|^2. \]
After decomposing according to the size of $|t_1-t_2|$, the reflection principle converts the inner Fourier sum into a Dirichlet polynomial of dual length comparable to $|t_1-t_2|/N$. Guth and Maynard introduce a flexible parameter $k \in \NN$ by applying H\"older's inequality and rewrite a higher moment of this polynomial as the second moment of a longer Dirichlet polynomial. This places the expression precisely in the setting of Heath-Brown's estimate, producing Proposition \ref{s2}.

\begin{prop}[{\cite[Proposition 6.1]{GM}}]\label{s2} For any $k \in \NN$, we have 
\[ S_2 \lessapprox_{k} N^2 |W|^2 + TN|W|^{2-1/k} + N^2 |W|^2 \left( \frac{T^{1/2}}{|W|^{3/4}} \right)^{1/k}. \]
\end{prop}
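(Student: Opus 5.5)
The statement is Proposition \ref{s2}, quoted from \cite[Proposition 6.1]{GM}. The plan is to observe that $S_2$ depends only on the matrix $M$ and the set $W$, never on the coefficients. The proof of Guth and Maynard then transfers verbatim. Concretely, $S_2$ is the sum of $I_{\mathbf m}$ over $\mathbf m$ with exactly two nonzero coordinates. By \eqref{Imdef}, each $I_{\mathbf m}$ is built from the Fourier transforms $\widehat h_{t_i-t_j}$, with $h_t(u)=w(u)^2u^{it}$, evaluated at $t_1,t_2,t_3\in W$. The only hypotheses on $W$ used in \cite{GM} are that it is $T^\varepsilon$-separated and lies in an interval of length $T=N^{6/5}$, and both hold in our setting. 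So I would check that none of the steps below touches the $c_n$.

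First, take $m_3=0$ (the other cases are symmetric). Integrating the weight term $\widehat h_{t_3-t_1}(0)$ and summing over $t_3$ reduces the triple sum to an expression of the form
\[ \sum_{t_1,t_2\in W}\Bigl|\sum_{m\neq0}\widehat h_{t_1-t_2}(mN)\Bigr|^2, \]
up to harmless weights. Next, estimate $\widehat h_t(mN)$ by stationary phase. It is negligible unless $|m|\asymp |t|/N$, and otherwise it equals a smooth amplitude times $(|t|/mN)^{it}$ times an explicit phase. Decomposing dyadically in $|t_1-t_2|\sim U$ then turns the inner sum into a Dirichlet polynomial in $t_1-t_2$ of length about $U/N$ with bounded coefficients.

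Then apply H\"older's inequality with exponent $k$, following \cite{GM}. The $2k$-th power of the inner polynomial is a polynomial of length $(U/N)^k$ with divisor-bounded coefficients. This puts the sum in the form
\[ \sum_{t_1,t_2\in W}|P(t_1-t_2)|^2, \]
which is exactly the setting of Heath-Brown's large-values estimate on difference sets \cite{HB2}. Inserting that estimate and summing over dyadic $U\le T$ gives the three terms. The term $N^2|W|^2$ comes from the diagonal and the mean value. The term $TN|W|^{2-1/k}$ comes from the length $T$ contribution. The term $N^2|W|^2(T^{1/2}|W|^{-3/4})^{1/k}$ comes from Heath-Brown's $|W|^{3/4}T^{1/2}$-type bound.

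I expect the main obstacle to be the stationary phase step. One has to verify that the error terms are uniform and that the amplitude is smooth enough to be separated from $t$, for instance by Mellin or Fourier expansion with only $T^{o(1)}$ loss, so that H\"older's inequality can be applied cleanly. Since this is exactly \cite[Section 6]{GM} and no coefficient information enters, I would simply cite it after confirming the separation hypothesis.
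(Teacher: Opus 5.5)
Your proposal is correct and follows the paper's own treatment. The paper likewise imports \cite[Proposition~6.1]{GM} on the grounds that $S_2$ depends only on the $T^\varepsilon$-separation of $W$ (which survives passing to $W_B$) and not on the $c_n$; its sketch is the same chain you describe: collapse to $\sum_{t_1,t_2\in W}|\sum_{m\neq0}\widehat h_{t_1-t_2}(mN)|^2$, reflect to a dual polynomial of length $\asymp|t_1-t_2|/N$, apply H\"older with parameter $k$, and invoke Heath-Brown's difference-set estimate. One small bookkeeping correction: with $L=(U/N)^k$ and Heath-Brown's bound $|W|L^2+|W|^2L+|W|^{5/4}T^{1/2}L$, the term $TN|W|^{2-1/k}$ comes from the diagonal ($t_1=t_2$) piece $|W|L^2$ at the top scale $U\asymp T$, while $N^2|W|^2$ comes from the $|W|^2L$ piece, which is independent of $U$.
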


$S_3$ contains the main harmonic-analytic input of \cite{GM}. We use the large-value condition for $D_B$ to bound the additive energy of $W$; we first explain how this quantity enters the estimate for $S_3$.

Recall that the quantities $I_{\mathbf m}$ involve sums over $t_1, t_2, t_3 \in W$. A natural approach after Poisson summation would be to apply stationary phase separately to each of the three Fourier integrals. Although this gives strong pointwise estimates for the individual integrals, the resulting phases remain coupled through the differences $t_1 - t_2$, $t_2 - t_3$, and $t_3 - t_1$.

One key observation in \cite{GM} is that, to allow for additional cancellation, it is actually advantageous not to simplify the Fourier integrals at this stage. Expanding the Fourier transforms in $I_{\mathbf m}$ and summing first over $t_1, t_2, t_3$, Guth and Maynard introduce the function $R(x) := \sum_{t \in W} x^{it}$ and reduce the problem to estimating oscillatory integrals containing
\[ R\left(\frac{u_1}{u_3}\right) R\left(\frac{u_2}{u_1}\right) R\left(\frac{u_3}{u_2}\right). \]
The three arguments are not independent: their product is $1$. With the change of variables $v_1 = u_1/u_3, v_2 = u_2/u_3$, repeated integration by parts shows the contribution to the integral is negligible unless $m_1 v_1 + m_2 v_2 + m_3$ is close to $0$. Thus, the arguments of $R$ satisfy an approximate affine relation, creating an opportunity for additional cancellation that could compensate for the savings sacrificed at the stationary-phase step. The problem becomes controlling how often $R$ can be simultaneously large at such triples of points.

We first obtain a preliminary bound using moment estimates for $R$. This is where additive energy enters the argument; it governs the average size of $R$. Guth and Maynard establish
\[ \int_{v \asymp 1} |R(v)|^2 \, dv \lessapprox |W|, \qquad \int_{v \asymp 1} |R(v)|^4 \, dv \lessapprox E(W), \]
in \cite[Lemmas 8.2 and 8.3]{GM}, where $E(W)$ denotes \textit{additive energy}, defined by
\[ E(W) = \# \{ w_1, w_2, w_3, w_4 \in W: |w_1 + w_2 - w_3 - w_4| \leq 1 \}.\]
Applying H\"older's inequality gives the preliminary estimate
\begin{equation}\label{s3bound}
S_3 \lessapprox T^2|W|^{1/2}E(W)^{1/2}.
\end{equation}
When $E(W)$ is close to its minimal scale $|W|^2$, this already gives the tight bound $S_3\lessapprox T^2|W|^{3/2}$. This handles the case of \textit{low additive energy}.

At the opposite extreme, \textit{high additive energy} means that $W$ contains many additive relations; as we will see, this is precisely the type of structure detected by Heath-Brown's difference-set estimate. The main difficulty lies between these two regimes in the case of \textit{intermediate additive energy}.

To bound $S_3$ more closely in terms of $E(W)$, Guth and Maynard analyze when \eqref{s3bound} could be close to sharp. Exploiting averaging over $m_1, m_2, m_3$, they show that large values of $R$ cannot repeatedly align under these affine transformations. The resulting extra cancellation gives the following proposition, which gains a factor of $N/T$ when $E(W)>|W|^2$.

\begin{prop}[{\cite[Proposition 10.1]{GM}}]\label{S3} If $W$ is a $T^\varepsilon$-separated set contained in an interval of length $T$, we have
\[ S_3 \lessapprox T^2 |W|^{3/2} + TN|W|^{1/2} E(W)^{1/2}. \]
\end{prop}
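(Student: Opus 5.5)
Proposition~\ref{S3} is a statement purely about the set $W$ and the matrix $M$: the quantities $I_{\mathbf m}$ in \eqref{Imdef} depend only on $W$, $N$ and the fixed weight $w$, and make no reference to the coefficients $c_n$. The plan is therefore not to reprove the estimate from scratch. Instead, I would check that every step of the proof of \cite[Proposition 10.1]{GM} uses only two hypotheses: that $W$ is $T^\varepsilon$-separated, and that it lies in an interval of length $T=N^{6/5}$. Under those hypotheses the argument transfers verbatim. The place where the coefficient hypothesis genuinely matters is the subsequent bound for $E(W)$, which is not part of this proposition.

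Concretely, I would retrace the argument in the following order.

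\emph{Step 1.} Expand each $\widehat h_{t_i-t_j}(m_kN)$ as an integral over $u_k\asymp 1$. Sum first over $t_1,t_2,t_3$, which rewrites $S_3$ as a sum over $\mathbf m$ with all $m_k\neq0$ of oscillatory integrals of
\[
R\!\left(\frac{u_1}{u_3}\right)R\!\left(\frac{u_2}{u_1}\right)R\!\left(\frac{u_3}{u_2}\right).
\]

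\emph{Step 2.} Change variables to $v_1=u_1/u_3$ and $v_2=u_2/u_3$, and integrate by parts in the remaining variable. This shows that the contribution is negligible unless $|m_1v_1+m_2v_2+m_3|\lessapprox T/N$ (after rescaling). It also restricts $|\mathbf m|\lessapprox T/N$; all error terms here are controlled by separation alone.

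\emph{Step 3.} Apply the moment bounds
\[
\int_{v\asymp 1}|R(v)|^2\,dv\lessapprox|W|,\qquad \int_{v\asymp 1}|R(v)|^4\,dv\lessapprox E(W)
\]
from \cite[Lemmas 8.2, 8.3]{GM}. These use only that $W$ is separated and lies in an interval of length $T$. With H\"older's inequality they give the preliminary bound \eqref{s3bound}.

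\emph{Step 4.} This is the refinement. Decompose $R$ by level sets, writing $U_\lambda=\{v:|R(v)|\approx\lambda\}$. The averaged count of triples $(v_1,v_2,v_1v_2)$-type configurations lying on the near-affine planes $m_1v_1+m_2v_2+m_3\approx0$, with each point in a large level set, is then controlled by averaging over $\mathbf m$. This is an incidence/$L^2$ estimate for sums over $\mathbf m$ of indicator functions of thin neighbourhoods of lines. It yields the main term $T^2|W|^{3/2}$ together with the improved off-diagonal term $TN|W|^{1/2}E(W)^{1/2}$.

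I expect Step 4 to be the only delicate point. It is the genuinely new harmonic analysis of \cite{GM}, and one must make sure that no hidden use of the large-value condition, for instance a pointwise bound $|R(v)|\le|W|$ combined with $1$-bounded coefficients, enters there. My first check would be to confirm that the level-set and incidence arguments invoke only the $L^2$ and $L^4$ bounds for $R$ and the trivial bound $|R|\le|W|$, all of which are properties of $W$ alone. If so, Proposition~\ref{S3} holds as stated in our setting, with $W$ the set arising from the block $D_B$.
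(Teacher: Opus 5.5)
Your proposal is correct and matches the paper. The paper imports \cite[Proposition 10.1]{GM} unchanged because its proof involves only $W$ (through $R$, the $T^\varepsilon$-separation, and the interval of length $T$), and the coefficient hypothesis re-enters only through the energy bound of Proposition~\ref{EWprop}; one minor slip in your sketch is that integrating by parts in $u_3$ localizes to $|m_1v_1+m_2v_2+m_3|\lessapprox 1/N$, not $T/N$, though this does not affect your argument since you are not reproving the estimate.
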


We do not modify this part of the argument. To handle sets $W$ with high additive energy, it remains to directly bound $E(W)$, and the large-value hypothesis for the underlying Dirichlet polynomial re-enters at this point. On the fixed dyadic block $\mathcal I_B$, set 
\[ \widetilde c_n := \frac{c_n}{2B} \mathbbm 1_{\mathcal I_B}(n), \qquad \widetilde D_B(t) := \frac{1}{2B} D_B(t)\]
so that $|\widetilde c_n| \leq 1$ and $|\widetilde D_B(t)| \geq \frac{N^{\sigma}}{2B}$ for all $t \in W$. The normalization leaves the set $W$, and hence $E(W)$, unchanged. Thus, applying \cite[Lemma~11.4]{GM} to $\widetilde D_B(t)$ gives 
\begin{equation*} E(W) \lessapprox B^2 N^{-2 \sigma} \sum_{n_1, n_2 \in \mathcal{I}_B} \Big\vert R \left( \frac{n_1}{n_2} \right) \Big\vert^3. \end{equation*}
For the present argument, we do not attempt to exploit the support of $\mathcal I_B$ further (see Remark \ref{longrmk}) and we use
\begin{equation}\label{newAEbound} E(W) \lessapprox B^2 N^{-2 \sigma} \sum_{n_1, n_2 \sim N} \Big\vert R \left( \frac{n_1}{n_2} \right) \Big\vert^3. \end{equation}

In the proof of \cite[Lemma~11.4]{GM}, the local constancy of the Dirichlet polynomial allows an approximate relation $t_1+t_2-t_3\approx t_4$ to be tested using $|\widetilde D_B(t_1+t_2-t_3)|^2$. Expanding this square and summing over $t_1,t_2,t_3\in W$ produces three copies of $R$, which combine to give $\left| R\left(\frac{n_1}{n_2}\right) \right|^3$.

The remaining problem is therefore to estimate this discrete cubic moment. The difference-set estimate of Heath-Brown \cite{HB2} plays a key role in bounding the discrete second and fourth moments; the use of this estimate requires $\ell^\infty$-bounded coefficients (see \cite[Lemmas 11.5 and 11.6]{GM}).

A direct interpolation between the discrete second- and fourth-moment estimates is, however, inefficient since the two moments are governed by different ranges of $\gcd(n_1, n_2)$. Guth and Maynard therefore split the sum at $\Delta = \frac{N^2}{T}$ and treat the small- and large-gcd ranges separately. 

\begin{prop}\label{EWprop} Let $D_B(t)$ and $W$ be as in Proposition \ref{keyWestimate}. Then, we have 
\[ E(W) \lessapprox |W| B^4 N^{4 - 4 \sigma} + B^2 T^{1/4} |W|^{21/8} N^{1-2\sigma} + B^2 |W|^3 N^{1-2\sigma}. \]
\end{prop}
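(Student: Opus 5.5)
Starting point: \eqref{newAEbound}. It remains to bound the discrete cubic moment
\[ \Sigma_3:=\sum_{n_1,n_2\sim N}\Big|R\Big(\frac{n_1}{n_2}\Big)\Big|^3. \]
Following \cite[Section~11]{GM}, I would split the pairs $(n_1,n_2)$ by $g=\gcd(n_1,n_2)$ at the threshold $\Delta=N^2/T$. Note that $R$ and the discrete second and fourth moments of $R$ at ratios $n_1/n_2$ depend only on $W$, $N$, $T$ and the spacing of $W$, not on the coefficients $c_n$. Hence the moment estimates \cite[Lemmas~11.5 and~11.6]{GM} apply verbatim. The Heath-Brown difference-set input is applied there to the $1$-bounded coefficients of the sum over $n_1,n_2$, not to our $c_n$. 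The only place the coefficients enter is the factor $B^2N^{-2\sigma}$ already present in \eqref{newAEbound}.

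\textbf{Large gcd, $g>\Delta$.} Write $n_1/n_2=a/b$ with $a,b\ll N/\Delta=T/N$. The number of pairs $(n_1,n_2)$ giving a fixed reduced ratio is $\ll N/\max(a,b)$. I would bound $|R|^3\le |W|\,|R|^2$ and feed in the discrete second-moment estimate over reduced fractions of height $T/N$. That estimate is essentially a large-sieve or mean-value bound, giving $\sum|R(a/b)|^2\lessapprox (T/N)^2|W|+|W|^2$ after weighting. Combined with the factor $B^2N^{-2\sigma}$, the $|W|^2$ part should produce the term $B^2|W|^3N^{1-2\sigma}$. The diagonal part should be absorbed, possibly after using $T=N^{6/5}$.

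\textbf{Small gcd, $g\le\Delta$.} Here I would apply H\"older as $|R|^3\le |R|\cdot|R|^2$ in the form
\[ \sum|R|^3\le\Big(\sum|R|^2\Big)^{1/2}\Big(\sum|R|^4\Big)^{1/2}. \]
For the second moment, $\sum_{n_1,n_2\sim N}|R(n_1/n_2)|^2$ expands to $\sum_{t,t'}|\sum_n n^{i(t-t')}|^2$, which is $\lessapprox N^2|W|+\ldots$ by the mean value theorem or the difference-set estimate. For the fourth moment, \cite[Lemma~11.6]{GM} expresses it through $E(W)$ and the Heath-Brown bound, giving roughly $N^2E(W)+N^2T^{1/2}|W|^{\ldots}$. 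With $T=N^{6/5}$ these should combine to $N^2 T^{1/4}|W|^{21/8}$ up to terms of the other two shapes.

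\textbf{Self-referential term.} The fourth moment contains $E(W)$, so one obtains a bound of the form
\[ E(W)\lessapprox B^2N^{-2\sigma}\bigl(N^2|W|^{1/2}E(W)^{1/2}+\cdots\bigr). \]
Solving the quadratic inequality in $E(W)^{1/2}$ gives
\[ E(W)\lessapprox B^4N^{4-4\sigma}|W|, \]
which is the first term of the claimed bound.

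\textbf{Main obstacle.} I expect the main difficulty to be the bookkeeping in the small-gcd range. The goal is to get exactly the exponents $T^{1/4}|W|^{21/8}$, and to check that the $B$-dependence enters only as $B^2$, or as $B^4$ after the absorption step. If the \cite{GM} lemmas already state the cubic-moment bound $\Sigma_3\lessapprox N^{2-2\sigma}\cdot(\ldots)$ for $1$-bounded coefficients, I would instead quote that bound directly for $\widetilde D_B$ with threshold $N^\sigma/(2B)$. I would then verify that replacing $N^\sigma$ by $N^\sigma/B$ reproduces exactly the three stated terms.
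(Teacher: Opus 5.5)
Your skeleton matches the paper. The cubic moment $\sum_{n_1,n_2\sim N}|R(n_1/n_2)|^3$ in \eqref{newAEbound} depends only on $W$, so the threshold $N^\sigma/(2B)$ enters only through the prefactor $B^2N^{-2\sigma}$. One splits at $\gcd(n_1,n_2)=\Delta=N^2/T$, and absorbing the $E(W)^{1/2}$ term gives $B^4N^{4-4\sigma}|W|$. Your fallback is essentially the paper's proof. It quotes \cite[Lemma~11.8]{GM} and \cite[Lemma~11.9]{GM} (the latter needs $N\ge T^{3/4}$, which holds since $T=N^{6/5}$) and solves
\[ E(W)\lessapprox B^2N^{-2\sigma}\bigl(N|W|^3+NT^{1/4}|W|^{21/8}+N^2|W|^{1/2}E(W)^{1/2}\bigr). \]
The range-by-range derivation you sketch first, however, would not work, because you put the terms in the wrong ranges. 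The small-gcd range gives only $(\Delta T+N^2)|W|^{1/2}E(W)^{1/2}\lessapprox N^2|W|^{1/2}E(W)^{1/2}$. H\"older is the right tool there, but no $T^{1/4}|W|^{21/8}$ term appears, and $\Delta=N^2/T$ is chosen exactly so that $\Delta T\le N^2$. All three terms instead come from $\gcd\ge\Delta$; the diagonal $n_1=n_2$ alone already contributes $\asymp N|W|^3$. Your target $N^2T^{1/4}|W|^{21/8}$ is also too large by a factor $N$: it would produce $B^2T^{1/4}|W|^{21/8}N^{2-2\sigma}$ rather than the stated $N^{1-2\sigma}$.

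More seriously, the large-gcd reduction $|R|^3\le|W|\,|R|^2$ followed by a second-moment bound loses too much. The bound $(T/N)^2|W|+|W|^2$ you posit is not a large-sieve or mean-value estimate. The $\asymp A^2$ reduced fractions of height $A\le T/N$ are only $A^{-2}$-separated, and against frequencies in $[0,T]$ the large sieve gives $(T+A^2)|W|\asymp T|W|$; nothing lets you drop the $T$.

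At the top of the range, $A\asymp T/N$, none of the standard inputs rescues the crude step:
\[ \text{trivial bound: } T|W|^3, \qquad \text{large sieve: } N^2|W|^2, \qquad \text{Heath-Brown: } NT^{1/2}|W|^{9/4}. \]
The last comes from the term $AT^{1/2}|W|^{5/4}$ in Heath-Brown's difference-set estimate \cite{HB2} for the height-$A$ block, after the weight $N/A$ and your extra factor $|W|$. Take $T=N^{6/5}$ and $E(W)\asymp|W|^2$. Then each of these exceeds all three admissible terms $N|W|^3$, $NT^{1/4}|W|^{21/8}$, $N^2|W|^{3/2}$ whenever $N^{8/15}<|W|<N^{4/5}$. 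That range contains the critical case $|W|\approx N^{3/5}$ at $\sigma=3/4$.

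So the large-gcd cubic moment must be handled as in \cite[Lemma~11.9]{GM}, and your argument is complete only through the fallback.
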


\begin{proof}
Starting from \eqref{newAEbound}, put $\Delta := \frac{N^2}{T}$ and split the discrete cubic moment according to whether $\gcd(n_1, n_2) \leq \Delta$ or $\gcd(n_1, n_2) \geq \Delta$. For the small-gcd contribution, \cite[Lemma~11.8]{GM} gives
\begin{equation}\label{smallgcd} \sum_{\substack{n_1, n_2 \sim N \\ \gcd(n_1, n_2) \leq \Delta}} \Big\vert R \left( \frac{n_1}{n_2} \right) \Big\vert^3 \lessapprox (\Delta T + N^2)|W|^{1/2} E(W)^{1/2} \lessapprox N^2|W|^{1/2} E(W)^{1/2}.\end{equation}
For the large-gcd contribution, \cite[Lemma 11.9]{GM} gives
\begin{equation}\label{largegcd} \sum_{\substack{n_1, n_2 \sim N \\ \gcd(n_1, n_2) \geq N^2/T}} \Big\vert R \left( \frac{n_1}{n_2} \right) \Big\vert^3 \lessapprox N|W|^3 + NT^{1/4}|W|^{21/8} + E(W)^{1/2} |W|^{1/2} N^2. \end{equation}
Here, with our choice $T = N^{6/5}$, the hypothesis $N \geq T^{3/4}$ is satisfied.
Combining \eqref{smallgcd} and \eqref{largegcd} with \eqref{newAEbound} gives
\[ E(W) \lessapprox B^2N^{-2\sigma} \left( N|W|^3 + NT^{1/4}|W|^{21/8} + N^2|W|^{1/2}E(W)^{1/2} \right). \]
Solving this inequality for $E(W)$ gives the result.
\end{proof}

Combining Proposition \ref{S3} and Proposition \ref{EWprop} then gives the following.

\begin{prop}\label{s3final} If $W$ is a $T^\varepsilon$-separated set contained in an interval of length $T$, we have
    \[ S_3 \lessapprox T^2 |W|^{3/2} + B^2 T |W| N^{3 - 2 \sigma} + BT |W|^2 N^{3/2 - \sigma} + BT^{9/8} |W|^{29/16} N^{3/2 - \sigma}. \]
\end{prop}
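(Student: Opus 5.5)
We substitute the bound of Proposition \ref{EWprop} into the second term of Proposition \ref{S3}; this is a direct computation. Proposition \ref{S3} gives
\[ S_3 \lessapprox T^2|W|^{3/2} + TN|W|^{1/2}E(W)^{1/2}, \]
and the first term already appears in the claimed bound. For the second term we use the subadditivity $(a+b+c)^{1/2}\le a^{1/2}+b^{1/2}+c^{1/2}$ on the three terms of Proposition \ref{EWprop}:
\[ E(W)^{1/2} \lessapprox |W|^{1/2}B^2N^{2-2\sigma} + B\,T^{1/8}|W|^{21/16}N^{1/2-\sigma} + B\,|W|^{3/2}N^{1/2-\sigma}. \]

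Next we multiply each piece by $TN|W|^{1/2}$ and check that it matches a term in the statement.

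The first piece gives $TN|W|^{1/2}\cdot |W|^{1/2}B^2N^{2-2\sigma} = B^2T|W|N^{3-2\sigma}$.

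The third piece gives $TN|W|^{1/2}\cdot B|W|^{3/2}N^{1/2-\sigma} = BT|W|^2N^{3/2-\sigma}$.

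The second piece gives $TN|W|^{1/2}\cdot BT^{1/8}|W|^{21/16}N^{1/2-\sigma} = BT^{9/8}|W|^{29/16}N^{3/2-\sigma}$, since $1/2+21/16=29/16$.

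These are exactly the last three terms in the statement of Proposition \ref{s3final}.

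The hypotheses line up without further work. Proposition \ref{S3} needs $W$ to be $T^\varepsilon$-separated and contained in an interval of length $T$, which is assumed. Proposition \ref{EWprop} was proved in the standing setting $T=N^{6/5}$ with $W$ the fixed large-value set for $D_B$, and its $\lessapprox$ losses are subpolynomial in $T$, so they are absorbed. No step is a genuine obstacle. The only point needing care is the bookkeeping of exponents, chiefly the fractional power $|W|^{29/16}$ and the power $B^2$, which arises only from the $|W|B^4N^{4-4\sigma}$ term.
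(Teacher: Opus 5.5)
Your proposal is correct and matches the paper's argument, which obtains Proposition~\ref{s3final} by inserting Proposition~\ref{EWprop} into the second term of Proposition~\ref{S3}. Your exponent bookkeeping also checks out, including $\tfrac12+\tfrac{21}{16}=\tfrac{29}{16}$ and the $B^2$ arising only from the $|W|B^4N^{4-4\sigma}$ term.
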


\begin{proof}[Proof of Proposition \ref{momentGM-local}]

We now assemble the preceding estimates. Cubing Proposition \ref{keyWestimate} and rearranging gives
\begin{equation}\label{final-assembly} |W|^3 B^{3q-6}N^{6\sigma-3} \lessapprox N^3+S_1+S_2+S_3. \end{equation}
By Proposition \ref{s1}, the contribution of $S_1$ is negligible. Applying Proposition \ref{s2} and Proposition \ref{s3final} to bound $S_2$ and $S_3$, respectively, we obtain, for every $k\in\NN$,
\begin{align*} |W|^3 B^{3q-6}N^{6\sigma-3} \lessapprox_{k} {}& N^3 + N^2|W|^2 + TN|W|^{2-1/k} + N^2T^{1/(2k)}|W|^{2-3/(4k)} \\
&+ T^2|W|^{3/2} + B^2T|W|N^{3-2\sigma} + BT|W|^2N^{3/2-\sigma} \\
&+ BT^{9/8}|W|^{29/16}N^{3/2-\sigma}.
\end{align*}
Rearranging term by term, we find
\begin{align}\label{final-W-bound}
|W|\lessapprox_{k} {}& B^{2-q}N^{2-2\sigma} + B^{6-3q}N^{5-6\sigma} + B^{\frac{3k(2-q)}{k+1}} T^{\frac{k}{k+1}} N^{\frac{k(4-6\sigma)}{k+1}} \\
&+ B^{\frac{12k(2-q)}{4k+3}} T^{\frac{2}{4k+3}} N^{\frac{4k(5-6\sigma)}{4k+3}} \nonumber + B^{4-2q}T^{4/3}N^{2-4\sigma} + B^{4-3q/2}T^{1/2}N^{3-4\sigma} \nonumber \\ 
&+ B^{7-3q}TN^{9/2-7\sigma} + B^{\frac{16(7-3q)}{19}} T^{18/19}N^{72/19-112\sigma/19}. \nonumber
\end{align}
By construction, we have $B\geq 1$. Since $q\geq8/3$, every exponent of $B$ appearing on the right-hand side of \eqref{final-W-bound} is nonpositive. Indeed, the strongest restriction is
\[ 4-\frac{3q}{2}\leq0, \]
which is equivalent to $q\geq8/3$. We may therefore discard the powers of $B$ in \eqref{final-W-bound}. Thus,
\begin{align}\label{GM-final-shape} |W|\lessapprox_k & N^{2-2\sigma} + N^{5-6\sigma} + T^{\frac{k}{k+1}} N^{\frac{k(4-6\sigma)}{k+1}} \\
&+ T^{\frac{2}{4k+3}} N^{\frac{4k(5-6\sigma)}{4k+3}} + T^{4/3}N^{2-4\sigma} + T^{1/2}N^{3-4\sigma} \nonumber\\
&+ TN^{9/2-7\sigma} + T^{18/19}N^{72/19-112\sigma/19}. \nonumber
\end{align}
We now take $k=4$ and recall that $T=N^{6/5}$. As in \cite[Section~12]{GM}, \eqref{GM-final-shape} becomes
\begin{align*} |W| \lessapprox T\bigg( & N^{(4-10\sigma)/5} + N^{(19-30\sigma)/5} + N^{(74-120\sigma)/25} + N^{(298-480\sigma)/95} \\
&+ N^{(12-20\sigma)/5} + N^{(9-14\sigma)/2} + N^{(354-560\sigma)/95} \bigg). \end{align*}
The terms on the right are dominated by
\[ T\left( N^{(4-10\sigma)/5} + N^{(12-20\sigma)/5} + N^{(9-14\sigma)/2} \right).\]
For $7/10\leq\sigma\leq4/5$, the first and third terms are dominated by the second. This proves Proposition \ref{momentGM-local} and, in turn, Theorem \ref{momentGM}.
\end{proof}

\begin{rmk}\label{longrmk}
Note that the exponent remains $5/2$ in Theorem \ref{mainautomorphic} even under the full Ramanujan conjecture. Thus, for the purposes of improving Theorem \ref{mainautomorphic}, it would not be necessary or helpful to push these techniques to their limit. It could, however, reduce the required moment information in Theorem \ref{momentGM}. We therefore examine how much we can gain from the sparsity of the coefficients in the dyadic decomposition. There are two places where the specific coefficients $c_n$ are relevant in Guth and Maynard's argument: the initial linear-algebra set-up and the additive energy (see \cite[Lemma 11.4]{GM}). We already took full advantage of the former.

In order to benefit from the latter, we would need for the sparsity of the coefficients to produce a meaningful refinement in the upper bound
\[ E(W_B)  \lessapprox N^{-2 \sigma} \sum_{n_1, n_2 \sim N} c_{n_1} \overline{c_{n_2}} \int_{|s| \lessapprox 1} \left( \frac{n_2}{n_1} \right)^{is} R \left( \frac{n_1}{n_2} \right)^2 R \left( \frac{n_2}{n_1} \right) ds. \]
Thus, a natural place to look is the cubic moment estimate for $R$ in Lemma 11.4 of \cite{GM}. The usual interpolation between the second- and fourth-moment estimates for $R$ mixes two different arithmetic regimes: the second moment primarily sees the spacing of many reduced ratios, while the fourth moment is sensitive to the multiplicity with which a reduced ratio is represented. Unfortunately, we do not know much about the support of the coefficients $c_n$ with $|c_n| \sim B$ other than its size. 

Now work on a subinterval of $[0,T]$ at the local scale $T_{\mathrm{loc}}$. Put $\Delta = N^2/T_{\mathrm{loc}}$ and consider
\[ E(W_B) \leq B^2N^{-2 \sigma} \sum_{d \leq \Delta} \sum_{\substack{n_1', n_2' \sim N/d \\ \gcd(n_1', n_2') = 1 \\ dn_1', dn_2' \in \mathcal{I}_B}} \left\vert R \left( \frac{n_1'}{n_2'} \right) \right\vert^3 + 
B^2N^{-2 \sigma} \sum_{d \geq \Delta} \sum_{\substack{n_1', n_2' \sim N/d \\ dn_1', dn_2' \in \mathcal{I}_B}} \left\vert R \left( \frac{n_1'}{n_2'} \right) \right\vert^3. \]
In the small-gcd range, the ratios $n_1'/n_2'$ are well separated, but there could be many short ratio intervals for which $R(n_1'/n_2')$ is large. In the large-gcd range, the contribution of a fixed reduced ratio $a/b$ is controlled by how often both $da$ and $db$ lie in $\mathcal I_B$ across $d > \Delta$, and there could be many instances of the same fixed reduced ratio.

A counting heuristic indicates that we would need $|\mathcal{I}_B| \ll N^2/T_{\mathrm{loc}}$ to rule out either conspiracy. Given \eqref{symsquare}, this would require $B \gtrapprox (T_{\mathrm{loc}}/N)^{1/4}$. On the other hand, one can show that we would need to take $B \lessapprox N^{1/40}$ in order to improve on Ingham--Huxley in the critical case. Taking the Guth--Maynard local scale $T_{\mathrm{loc}} = N^{6/5}$, these inequalities cannot hold simultaneously. Thus, a cardinality bound for $|\mathcal I_B|$ alone is not able to prevent concentration either in short ratio intervals or along common dilates. This level of distributional control within $\mathcal{I}_B$ appears to be out of reach.
\end{rmk}

\section{Proof of Theorems \ref{main} and \ref{mainautomorphic}}\label{mainproof}
In the zero detection framework of Section \ref{zeroframework}, it remains to optimize $Y$ so the estimates for $N_0 + N_1 + N_2$ are sharp. By Lemma \ref{n0} and Proposition \ref{n2}, for fixed $0 < \eta <1/2$, we have
\begin{equation}\label{n0n2} N_0 + N_2 \ll_\eta (\log T)^2 + T^{1 + 3 \eta} Y^{1 - 2 \sigma}.\end{equation}
Proposition \ref{n1} shows that it suffices to bound the cardinality of a $1$-separated set $W\subset[0,T]$  such that there exist an integer $N$ with
\[ \lfloor Y^{1/2} \rfloor \le N\le CY\log T, \]
and coefficients $c_n$ satisfying 
\[ \sum_{n \sim N} |c_n|^q \ll_\eta N^{1 + o_\eta(1)} \] for which $D(t):=\sum_{n \sim N} c_n n^{it}$ satisfies $ |D(t)|\ge N^{\sigma}$ for all $t \in W$. In particular, we have $N_1\lessapprox_\eta |W|$. By H\"older's inequality,
\[ \sum_{n\sim N}|c_n|^2 \ll_\eta N^{1+o_\eta(1)}.\]
Thus the classical estimates of Proposition \ref{original} apply after a harmless $N^{o_\eta(1)}$ rescaling of the coefficients. Since $N \leq CY \log T \lessapprox T^6$, the resulting loss is absorbed into $\lessapprox_\eta$.

The case $\sigma=\frac{1}{2}$ follows immediately from Lemma \ref{standardzerocount}, so we assume that $\sigma>\frac{1}{2}$. We first treat the range $\frac{1}{2} < \sigma \leq \frac{7}{10}$. Take $Y=T^{\frac{2}{3-2\sigma}}.$
By \eqref{n0n2}, we have
\[ N_0 + N_2 \ll_\eta (\log T)^2 + T^{1 + 3 \eta} Y^{1 - 2 \sigma} \ll_\eta T^{1 + 3\eta + \frac{2(1-2\sigma)}{3-2\sigma}} = T^{\frac{5-6\sigma}{3-2\sigma} + 3\eta}, \]
and we obtain
\[ N_0+N_2 \ll_\delta T^{\frac{4(1-\sigma)}{3-2\sigma}+\delta} \]
given any $\delta > 0$ after taking $\eta < \delta/6$. Thus, applying the mean value theorem \eqref{ingham} to $D(t)$ and using $Y^{1/2} \ll \lfloor Y^{1/2} \rfloor \leq N \leq CY \log T$, we obtain
\[ |W| \lessapprox_\eta N^{2 - 2 \sigma} + TN^{1 - 2 \sigma}
\lessapprox_\eta Y^{2 - 2 \sigma} + TY^{1/2 - \sigma}. \]
Now we have
\[ Y^{2 - 2 \sigma} = T^{\frac{4(1-\sigma)}{3 -2\sigma}}, \qquad
TY^{1/2 - \sigma} = T^{1+\frac{(1-2\sigma)}{3-2\sigma}} = T^{\frac{4(1-\sigma)}{3-2\sigma}}.\]
Combining this with $N_1 \lessapprox_\eta |W|$, we obtain  \[ N_1 \lessapprox_\eta T^{\frac{4(1-\sigma)}{3 - 2 \sigma}}.\]
Together with the estimate for $N_0+N_2$ and with the fixed choice of $\eta < \delta/6$, this gives
\[ N_L(\sigma,T) \ll T^{\frac{4(1-\sigma)}{3-2\sigma} + o(1)}. \]
Thus, we may take
\[ \alpha(\sigma) = \frac{4}{3 - 2 \sigma} \quad \text{ given } \quad \frac{1}{2} \leq \sigma \leq \frac{7}{10}.\]

We now turn to the middle range. We take $Y = T$. By \eqref{n0n2}, we have that 
\begin{equation}\label{bound} N_0 + N_2 \ll_\eta (\log T)^2 + T^{2 - 2 \sigma + 3 \eta} \ll_\eta T^{2 - 2 \sigma + 3 \eta} \ll_\delta T^{2 - 2 \sigma + \delta} \end{equation}
given any $\delta > 0$ after taking $\eta < \delta/6$. If $T^{5/8} \le N \le T^{\frac{25(1-\sigma)}{4(9 - 10 \sigma)}}$, we apply Theorem \ref{momentGM} to the Dirichlet polynomial $D(t)$. (Here, the factor $(NT)^{o(1)}$ in Theorem \ref{momentGM} is absorbed into $\lessapprox_\eta$.) We note that $\sigma \in [7/10, 4/5]$ implies that $N^{2 - 2\sigma}$ is dominated by $N^{18/5 - 4 \sigma}$ so that 
\begin{equation}\label{ineq1} |W| \lessapprox_\eta (T^{\frac{25(1-\sigma)}{4(9 - 10 \sigma)}})^{18/5 - 4 \sigma} + T(T^{5/8})^{12/5 - 4 \sigma} \lessapprox_\eta T^{5/2(1 - \sigma)}. \end{equation}

We check that this estimate holds in the remaining ranges using the classical estimates of Proposition \ref{original}. If $\lfloor T^{1/2} \rfloor \le N \le T^{5/8}$, we can apply the mean value theorem as in \eqref{ingham} to the Dirichlet polynomial $D(t)^2$ so that 
\begin{equation}\label{ineq2} |W| \lessapprox_\eta N^{4 - 4 \sigma} \lessapprox_\eta T^{5/2(1 - \sigma)}.\end{equation}
Further, if $T^{\frac{25(1-\sigma)}{4(9 - 10 \sigma)}} \le N \ll Y \log T = T \log T$, we can apply the mean value estimate as in \eqref{ingham} to $D(t)$ so that 
\begin{equation}\label{ineq3} |W| \lessapprox_\eta T (T^{\frac{25(1-\sigma)}{4(9 - 10 \sigma)}})^{1-2 \sigma} = T^{(61 - 115 \sigma + 50 \sigma^2)/(36-40 \sigma)}. \end{equation}
We can quickly verify that this is less than $T^{5/2(1 - \sigma)}$. In \cite{GM}, it is shown that if $N \lessapprox T^{1/2}$, we have $|W| \lessapprox_\eta T^{15(1-\sigma)/(3+5 \sigma)}$, which is also less than $T^{5/2(1 - \sigma)}$ for $\sigma \in [7/10, 4/5]$.
Combining these estimates with $N_1\lessapprox_\eta |W|$, we obtain
\[ N_1 \lessapprox_\eta T^{\frac{5}{2}(1-\sigma)}. \]
Together with \eqref{bound}, and with the fixed choice $\eta<\delta/6$, this gives
\[ N_L(\sigma,T) \ll T^{\frac{5}{2}(1-\sigma)+o(1)}, \] so we may take $\alpha(\sigma)=\frac{5}{2}$ for all $\frac{7}{10}\leq\sigma\leq\frac{4}{5}$.

It remains to treat the range $\frac{4}{5} \leq \sigma \leq 1$. We now take $Y=T^{1/\sigma}$. By \eqref{n0n2}, we have
\[ N_0 + N_2 \ll_\eta (\log T)^2 + T^{1 + 3\eta}Y^{1-2\sigma} \ll_\eta T^{1 + 3\eta + \frac{1-2\sigma}{\sigma}} \ll_\delta  T^{\frac{2(1-\sigma)}{\sigma}+\delta}\]
for all $\delta > 0$ after choosing $\eta < \delta/6$. For $N_1$, we apply Huxley's estimate \eqref{huxley} to $D(t)$:
\[ |W| \lessapprox_\eta  N^{2-2\sigma} + TN^{4-6\sigma} \lessapprox_\eta Y^{2- 2 \sigma} + T Y^{2 - 3\sigma}. \]
Now, we have 
\[ Y^{2 - 2 \sigma} = T^{\frac{2(1-\sigma)}{\sigma}}, \qquad TY^{2 - 3 \sigma} = T^{1 + \frac{2 - 3 \sigma}{\sigma}} = T^{\frac{2(1-\sigma)}{\sigma}}. \]
Combining this with $N_1 \lessapprox_\eta |W|$ and with the fixed choice $\eta < \delta/6$, this gives 
\[ N_L(\sigma,T) \ll T^{\frac{2}{\sigma} (1-\sigma)+o(1)},\]
so we may take
\[ \alpha(\sigma) = \frac{2}{\sigma} \quad \text{ given } \quad \frac{4}{5} \leq \sigma \leq 1. \]
This proves Theorem \ref{main}. Theorem \ref{mainautomorphic} follows from Theorem \ref{main} and the discussion in Sections \ref{automorphicprop} and \ref{properties}. \qed

\begin{rmk} The exponent $5/2$ is determined by the critical length $N \asymp T^{5/8}$, at which the mean-value bound applied to $D(t)^2$ and the Guth--Maynard bound balance. 
\end{rmk}

\begin{rmk}\label{qinterpolation}
The preceding proof also gives intermediate zero density estimates when $\frac{5}{2}<q<\frac{8}{3}$. Indeed, the bound in \eqref{final-W-bound} contains the factor $B^{4-\frac{3q}{2}}$. For $q \geq 8/3$, this factor is largest when $B \asymp 1$, while for $q<8/3$, it is largest at the maximal admissible scale $B \lessapprox N^{1/q}$. Carrying this loss through the preceding optimization gives
\[ N_L(\sigma,T) \lessapprox T^{\frac{40q}{31q-40}(1-\sigma) + o(1)} \]
for $\frac{7}{10} < \sigma < \frac{4}{5}$, which improves the classical estimates for $\frac{2}{q}-\frac{1}{20} < \sigma < \frac{31}{20}-\frac{2}{q}$. As $q$ increases from $5/2$ to $8/3$, the uniform exponent decreases from $8/3$ to $5/2$, and the interval of improvement expands from the point $\sigma=3/4$ to $\sigma \in [7/10,4/5]$. This identifies $q>5/2$ as the threshold for improving the classical zero density estimates and $q\geq8/3$ as the threshold for recovering the exponent $5/2$ of Theorem \ref{main}.
\end{rmk}

\section{Short-interval consequences}\label{applications}

For a cuspidal automorphic representation $\pi$ of $\GL_2(\AAA_{\QQ})$ with unitary central character, define the generalized von Mangoldt coefficients $\Lambda_\pi(n)$ by 
\[ -\frac{L'}{L}(s, \pi) = \sum_{n=1}^\infty \frac{\Lambda_\pi(n)}{n^s}, \qquad \Real(s) > 1.\]
Broadly speaking, a zero density estimate with exponent $c$ gives a short-interval estimate in the range $h\geq x^{1-1/c+\varepsilon}$ for any $\varepsilon > 0$, provided that it is accompanied by suitable quantitative control near $\sigma=1$ and a sufficiently wide zero-free region, such as the Vinogradov--Korobov region \eqref{VK-hypothesis}.

The subpolynomial loss in Theorem \ref{mainautomorphic} is harmless when $\sigma$ is bounded away from $1$, but the standard Hoheisel argument requires quantitative control of this loss when $\sigma$ is close to $1$. For this range, we use a result of Heath-Brown \cite{HB}. Although his result is stated for Dedekind zeta functions of quadratic fields, one can verify that his argument applies more generally to admissible degree $2$ $L$-functions with $q$th-moment input for any $q \geq 2$; the details are omitted.

\begin{lem}\label{nearone} Let $L(s)$ be an admissible degree $2$ $L$-function with $q$th-moment input for some $q \geq 2$. For every fixed $\delta > 0$, there exists $A = A(L, \delta) > 0$ such that 
\begin{equation}
    N_L(\sigma, T) \ll_{L, \delta} T^{2(1 + \delta)(1-\sigma)} (\log T)^A
\end{equation}
uniformly for $9/10 \leq \sigma \leq 1$.
\end{lem}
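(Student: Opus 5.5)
We follow Heath-Brown's treatment near $\sigma=1$ in \cite{HB}, but now inside the zero detection framework of Section~\ref{zeroframework}, keeping every loss as an explicit power of $\log T$ or as $T^{O(\delta)(1-\sigma)}$ rather than a generic $T^{o(1)}$. The key point is that for $\sigma$ near $1$ the target bound $T^{2(1+\delta)(1-\sigma)}$ is only a small power of $T$. So the $X^{2}=T^{2\eta}$ and $T^{\delta}$ losses in Propositions~\ref{n2} and~\ref{n1} cannot be afforded as stated; they must be replaced by losses of shape $T^{c\delta(1-\sigma)}(\log T)^{O(1)}$.

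\emph{Step 1: parameters.} First I would take the mollifier length $X$ and the smoothing length $Y$ to depend on $\sigma$. Concretely, set $Y=T^{1/\sigma}$, as in the range $\sigma\ge 4/5$ of the proof of Theorem~\ref{main}, and $X=T^{\delta(1-\sigma)}$. The bound for $N_0$ (Lemma~\ref{n0}) already carries only a $(\log T)^2$ loss. For $N_2$, the proof of Proposition~\ref{n2} gives
\[ N_2\ll X^{2}Y^{1-2\sigma}(\log T)^5\int_{-2T}^{2T}|L(\tfrac12+iu)|^2\,du. \]
Two changes are needed here. One must bound $M_X(\frac12+iu)$ by $X^{1/2}(\log X)^{O(1)}$ using the second moment of $b_n$ (Lemma~\ref{bn-bound} with $q=2$) instead of Lemma~\ref{bn-properties}. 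One also needs a second moment $\int_{-T}^T|L(\frac12+iu)|^2du\ll T(\log T)^{O(1)}$ with a genuine log loss. That bound follows from the approximate functional equation as in Lemma~\ref{secondmoment}, provided the Rankin--Selberg-type bound $\sum_{n\le x}|a_n|^2\ll x(\log x)^{O(1)}$ is available. Here I would assume, as Heath-Brown does, that the $q$th-moment hypothesis is available with logarithmic rather than $x^{o(1)}$ loss. Granting this, $N_2\ll T^{2(1-\sigma)(1+\delta)}(\log T)^{A}$.

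\emph{Step 2: the Type I sum.} Next I would replace the Jutila splitting and the arbitrary $T^\delta$ Fourier truncation in Proposition~\ref{n1}. Instead, use Montgomery's zero detection directly: for each detected zero, some dyadic block $\sum_{n\sim N}C_n n^{-\rho}e^{-n/Y}$ with $X<N\le CY\log T$ is $\gg 1/\log T$. This gives only $O(\log T)$ blocks, and the $\beta\to\sigma$ transfer can be done by partial summation, losing a power of $\log T$. The coefficients $C_n$ satisfy $C_n=0$ for $1<n\le X$. Their second moment, in the logarithmic form of Lemma~\ref{Cn-bound}, is $\ll N(\log N)^{O(1)}$.

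\emph{Step 3: large values, with the main obstacle.} I would then apply the Montgomery--Hal\'asz--Huxley estimate \eqref{huxley} to a suitable power $D(t)^k$ whose length $N^k$ lies in $[T^{1-\delta'},T^{1/\sigma}]$. This gives
\[ |W|\ll (N^{2-2\sigma}+TN^{4-6\sigma})(\log T)^{O(1)} \ll T^{2(1-\sigma)/\sigma}(\log T)^{O(1)}, \]
and $2/\sigma\le 2(1+\delta)$ once $1-\sigma$ is small in terms of $\delta$. The main obstacle is the short blocks $X<N\le T^{\delta'}$. There, raising to a power $k\asymp 1/(\delta(1-\sigma))$ inflates the divisor-type constants to $k^{O(k)}$ and the logarithmic losses to $(\log T)^{O(k)}$, which must stay $\ll T^{\delta(1-\sigma)}$. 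I would handle this as Heath-Brown does. For $\sigma\ge 9/10$ with $1-\sigma\gg\delta$, the loss $T^{\delta'}$ is acceptable. For $1-\sigma\le c\delta\,\log\log T/\log T$ one uses instead the trivial bound $N_L(\sigma,T)\ll T\log T$ or the zero-free region. In the intermediate regime one exploits the factor $N^{-2\sigma}$ from the large value threshold: for $N\ge X$ it gives a saving of $X^{-(2\sigma-1)}$, which absorbs $(\log T)^{O(k)}$. Balancing these regimes is the delicate bookkeeping step, and is where $A=A(L,\delta)$ arises.
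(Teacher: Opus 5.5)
\textbf{Gap 1: the regime $\sigma\to 1$.} Your handling of this regime does not work, and the cause is the parameter choice in Step~1. With $Y=T^{1/\sigma}$, the room left in $N_2$ is only $T^{O(1-\sigma)}$. That forces $X\le T^{O(1-\sigma)}$ (you take $X=T^{\delta(1-\sigma)}$), hence blocks as short as $X$ and powers $k\asymp 1/(\delta(1-\sigma))$. Your three patches do not close this.
\begin{itemize}
\item When $1-\sigma\le c\delta\log\log T/\log T$, the target is $(\log T)^{O(1)}$. The bound $N_L(\sigma,T)\ll T\log T$ is useless there.
\item No zero-free region is among the hypotheses. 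A classical one would in any case only reach $1-\sigma\ll 1/\log T$.
\item The saving $X^{-(2\sigma-1)}\approx T^{-\delta(1-\sigma)}$ beats $(\log T)^{O(k)}$ only once $(1-\sigma)^2\gg \delta^{-2}\log\log T/\log T$. So a range of $\sigma$ remains uncovered.
\end{itemize}

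The missing idea is to keep the mollifier a fixed power and pay for it with a slightly longer smoothing length: take $Y=T^{1+\delta}$ and $X=T^{\delta/2}$.
\begin{itemize}
\item Proposition~\ref{n2}, with your log-loss second moment, then gives $N_2\ll X^2TY^{1-2\sigma}(\log T)^{O(1)}=T^{2(1+\delta)(1-\sigma)}(\log T)^{O(1)}$ for every $\sigma$.
\item Jutila's splitting in Proposition~\ref{n1intermediate} has $m<2(1+\delta)/\delta$ pieces. So only $O_\delta(1)$-fold products occur, and all divisor-type losses are $(\log T)^{O_\delta(1)}$. This is the source of $A(L,\delta)$, and no separate treatment near $\sigma=1$ is needed.
\item For $\lfloor Y^{1/2}\rfloor\le N\le CY\log T$, \eqref{huxley} gives $|W|\ll(\log T)^{O_\delta(1)}(Y^{2-2\sigma}+TY^{2-3\sigma})$. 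Here $Y^{2-2\sigma}=T^{2(1+\delta)(1-\sigma)}$, and $TY^{2-3\sigma}\le T^{2(1+\delta)(1-\sigma)}$ exactly when $(1+\delta)\sigma\ge 1$.
\end{itemize}
Separately, your window $[T^{1-\delta'},T^{1/\sigma}]$ in Step~3 is too narrow. A block with $T^{1/(2\sigma)}<N<T^{1-\delta'}$ has no power in it. The right window is $[Y^{1/2},Y]$, which is exactly what Jutila's splitting supplies.

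\textbf{Gap 2: the range of $\sigma$.} Both your route and the corrected one reach the exponent $2(1+\delta)$ only for $\sigma\ge 1/(1+\delta)$. For $\delta<1/9$ and $9/10\le\sigma<1/(1+\delta)$, the bound $T^{2(1-\sigma)/\sigma}$ from your own Step~3 exceeds $T^{2(1+\delta)(1-\sigma)}$ by a positive power of $T$. So your remark that the loss $T^{\delta'}$ is acceptable there misidentifies the obstruction, and this part of the statement is not proved. Nor should it be expected from this input. At $\sigma=9/10$ with small $\delta$, the statement would beat $\alpha(9/10)=20/9$ from Theorem~\ref{main}, i.e.\ it would give the density hypothesis there. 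The paper offers no argument for it beyond citing Heath-Brown. For quadratic $K$ the uniform statement does hold, since $\zeta_K=\zeta\,L(s,\chi_D)$ and classical degree-one estimates already give $T^{2(1-\sigma)}(\log T)^{O(1)}$ on $[5/6,1]$. Nothing like that is available for a general degree-$2$ $L$-function.

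What the method does give is the bound for $\sigma\ge 1/(1+\delta)$, hence on all of $[9/10,1]$ once $\delta\ge 1/9$. That is all Proposition~\ref{shortintervalLambda} uses, since there $T\le x^{2/5}$ and any fixed $\delta<1/4$ works.

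Finally, you are right that a $(\log T)^A$ conclusion needs the moment bounds with logarithmic rather than $x^{o(1)}$ loss. That is an added hypothesis relative to the admissibility axioms, and it should be stated as one. It is harmless for automorphic $L$-functions (by Rankin--Selberg) and for Artin $L$-functions (by the Ramanujan bound).
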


We combine Lemma \ref{nearone} with Theorem \ref{mainautomorphic} to obtain the following short-interval estimate.

\begin{prop}\label{shortintervalLambda}
Let $\pi$ be a cuspidal automorphic representation of $\GL_2(\AAA_{\QQ})$ with unitary central character, and suppose that $L(s,\pi)$ satisfies \eqref{VK-hypothesis}. Then, for every fixed $\varepsilon>0$,
\[ \sum_{x<n\leq x+h}\Lambda_\pi(n) \ll_{\pi, \varepsilon} h \exp \bigl(-(\log x)^{1/4} \bigr) \]
uniformly for $x^{3/5+\varepsilon}\leq h\leq x$ as $x\to\infty$.
\end{prop}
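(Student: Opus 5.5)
We follow the standard Hoheisel--Ingham scheme via the truncated explicit formula for $\psi_\pi(x):=\sum_{n\le x}\Lambda_\pi(n)$. Fix $\varepsilon>0$ and put $T:=x^{2/5+\varepsilon/2}$, so that $h/T\ge x^{1/5+\varepsilon/2}$ is large. The plan has three steps.

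\textbf{Step 1: explicit formula.} A smoothed Perron argument for $-L'/L(s,\pi)$ gives
\[
\psi_\pi(x+h)-\psi_\pi(x)=-\sum_{|\gamma|\le T}\frac{(x+h)^\rho-x^\rho}{\rho}+O\!\left(\frac{x^{1+o(1)}}{T}\right).
\]
Since $\pi$ is cuspidal, there is no main term from a pole. The ingredients are:
\begin{itemize}
\item the zero-counting estimate of Lemma~\ref{standardzerocount};
\item the bound $\sum_{n\sim x}|\Lambda_\pi(n)|^2\ll x^{1+o(1)}$, which follows from Rankin--Selberg, and controls the truncation error and the contribution of prime powers near the endpoints;
\item the trivial zeros, including those at $-\kappa_j$ with $\Real\kappa_j>-\tfrac12$, which contribute $O(x^{1/2+o(1)})$.
\end{itemize}
The error $x^{1+o(1)}/T=x^{3/5-\varepsilon/2+o(1)}$ is acceptable compared with $h\exp(-(\log x)^{1/4})$, since $h\ge x^{3/5+\varepsilon}$. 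The truncation error is also acceptable.

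\textbf{Step 2: bound the zero sum.} We use $|((x+h)^\rho-x^\rho)/\rho|\le h x^{\beta-1}$. This gives
\[
\sum_{|\gamma|\le T}\frac{(x+h)^\rho-x^\rho}{\rho}\ll h\sum_{|\gamma|\le T}x^{\beta-1}.
\]
Writing the sum as a Stieltjes integral in $\sigma$, we get
\[
\sum_{|\gamma|\le T}x^{\beta-1}\ll x^{-1/2}N_\pi(0,T)+\log x\int_{1/2}^{1-\lambda(T)}x^{\sigma-1}N_\pi(\sigma,T)\,d\sigma.
\]
Here $\lambda(T)=c(\log T)^{-2/3}(\log\log T)^{-1/3}$ is the width from \eqref{VK-hypothesis}, so $N_\pi(\sigma,T)=0$ for $\sigma>1-\lambda(T)$. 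The first term is $x^{-1/2}T\log T=o(x^{-\varepsilon})$.

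\textbf{Step 3: split the $\sigma$-integral at $9/10$.}
\begin{itemize}
\item For $\tfrac12\le\sigma\le\tfrac9{10}$, Theorem~\ref{mainautomorphic} gives $N_\pi(\sigma,T)\ll T^{\frac52(1-\sigma)+o(1)}$. Then
\[
x^{\sigma-1}N_\pi(\sigma,T)\ll x^{-(1-\sigma)\left(1-\frac52\left(\frac25+\frac\varepsilon2\right)\right)+o(1)}=x^{-(1-\sigma)\frac{5\varepsilon}{4}+o(1)}\le x^{-\varepsilon/8+o(1)},
\]
since $1-\sigma\ge 1/10$. This is far smaller than $\exp(-(\log x)^{1/4})$.
\item For $\tfrac9{10}\le\sigma\le1-\lambda(T)$, Lemma~\ref{nearone} with a small fixed $\delta$ (say $\delta=\varepsilon$) gives $N_\pi(\sigma,T)\ll T^{2(1+\delta)(1-\sigma)}(\log T)^A$. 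Hence
\[
x^{\sigma-1}N_\pi(\sigma,T)\ll(\log x)^A\exp\bigl(-\theta(1-\sigma)\log x\bigr),
\qquad
\theta=1-2(1+\delta)\left(\tfrac25+\tfrac\varepsilon2\right).
\]
Choosing $\varepsilon<1/10$ without loss and $\delta$ small, we have $\theta\ge c'>0$, and the right-hand side is maximised at $1-\sigma=\lambda(T)$. With $\log T\asymp\log x$ this gives the bound
\[
\exp\bigl(-c''(\log x)^{1/3}(\log\log x)^{-1/3}+A\log\log x\bigr),
\]
which is $\ll\exp(-2(\log x)^{1/4})$.
\end{itemize}
Multiplying by $h\log x$ yields $\psi_\pi(x+h)-\psi_\pi(x)\ll h\exp(-(\log x)^{1/4})$, uniformly in the stated range of $h$ (the upper end $h\le x$ is harmless).

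The main obstacle is the region near $\sigma=1$. There the $T^{o(1)}$ loss in Theorem~\ref{mainautomorphic} cannot be tolerated, which is exactly why Lemma~\ref{nearone} is needed, with its exponent $2(1+\delta)<5/2$ and only a logarithmic loss. One must also check that this loss is beaten by the Vinogradov--Korobov saving $\exp(-c(\log x)^{1/3-o(1)})$; this works since $1/3>1/4$. A secondary technical point is making Step 1 uniform without Ramanujan. Here I would control $|\Lambda_\pi(n)|\le 2n^{7/64}\log n$ pointwise via \eqref{satakebound}, together with the $\ell^2$ bound on average, when bounding the Perron truncation error near $n\approx x$.
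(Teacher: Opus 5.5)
Your route is the paper's: truncated explicit formula, Theorem~\ref{mainautomorphic} for $\sigma\le\frac{9}{10}$, and Lemma~\ref{nearone} with \eqref{VK-hypothesis} for $\sigma\ge\frac{9}{10}$. Using a Stieltjes integral in $\sigma$ instead of the paper's partition of the real parts into intervals of length $1/\log x$ is immaterial. But your choice $T=x^{2/5+\varepsilon/2}$ puts $T$ on the wrong side of $x^{2/5}$, and a sign slip in the first bullet of Step~3 hides this. In fact $1-\frac{5}{2}\bigl(\frac{2}{5}+\frac{\varepsilon}{2}\bigr)=-\frac{5\varepsilon}{4}$, so Theorem~\ref{mainautomorphic} actually gives
\[
x^{\sigma-1}N_\pi(\sigma,T)\ll x^{\frac{5\varepsilon}{4}(1-\sigma)+o(1)}.
\]
At $\sigma=\frac34$ this is $x^{5\varepsilon/16+o(1)}$, which is worse than trivial. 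The exponent $\frac52$ is useful only if $T^{5/2}\le x^{1-c}$, i.e.\ $T\le x^{2/5-c'}$, while the truncation error forces $T\ge x^{1+o(1)}/h$. These two requirements are compatible precisely because $h\ge x^{3/5+\varepsilon}$, and this is where the threshold $\frac35$ comes from.

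The fix is to take $T=x^{2/5-\varepsilon/2}$; the paper's choice $T=\frac{x}{h}\exp(2(\log x)^{1/4})$ also satisfies $T\le x^{2/5-\varepsilon/2}$. Then:
\begin{itemize}
\item the truncation error is $x^{3/5+\varepsilon/2+o(1)}\le hx^{-\varepsilon/2+o(1)}$;
\item your displayed exponent $-(1-\sigma)\frac{5\varepsilon}{4}$ becomes correct;
\item for $\sigma\ge\frac{9}{10}$ one has $\theta=1-2(1+\delta)\bigl(\frac{2}{5}-\frac{\varepsilon}{2}\bigr)\ge\frac15-\delta>0$.
\end{itemize}
With this change the rest of your argument goes through as written.

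A smaller point concerns Step~1. The Perron truncation error contains $\sum_{|n-x|\le x/T}|\Lambda_\pi(n)|$, and the same sum at $x+h$. Neither tool you name bounds this by $x^{1+o(1)}/T$:
\begin{itemize}
\item the pointwise bound \eqref{satakebound} gives only $x^{1+7/64+o(1)}/T$;
\item Cauchy--Schwarz with the dyadic bound $\sum_{n\sim x}|\Lambda_\pi(n)|^2\ll x^{1+o(1)}$ gives only $x^{1+o(1)}T^{-1/2}$.
\end{itemize}
Both exceed $h$ when $h$ is close to $x^{3/5}$. What is needed is a short-interval second moment, for example $\sum_{x<n\le x+H}|\lambda_\pi(n)|^2\ll x^{o(1)}H$ for $H\ge x^{3/5+\varepsilon/2}$. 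This follows from \eqref{coeffineq} together with a Rankin--Selberg asymptotic $\sum_{n\le x}\lambda_{\pi\times\widetilde{\pi}}(n)=c_\pi x+O(x^{3/5+o(1)})$. The paper does not discuss this either; it simply quotes the explicit formula with error $O_\pi(x(\log x)^3/T)$. So this is a detail you need to supply, not a departure from the paper's argument.
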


\begin{proof}
The explicit formula (see \cite[Chapter~10]{IK}) gives
\[ \sum_{x<n\leq x+h}\Lambda_\pi(n) = -\sum_{\substack{\rho\\|\gamma|\leq T}} \frac{(x+h)^\rho-x^\rho}{\rho} + O_\pi\left(\frac{x(\log x)^3}{T}\right). \]
Following \cite[Section 13.2]{GM}, we take $T=\frac{x}{h}\exp\bigl(2(\log x)^{1/4}\bigr)$. Separating the real parts of the zeros into intervals of length $1/\log x$ gives
\begin{equation}\label{explicit-formula-short} \left|\sum_{x<n\leq x+h}\Lambda_\pi(n)\right| \ll_\pi h(\log x)\sup_{\frac{1}{2} \leq \sigma \leq 1} x^{\sigma-1}N_\pi(\sigma,T) + h\exp\bigl(-(\log x)^{1/4}\bigr).
\end{equation}
Since $h\geq x^{3/5+\varepsilon}$, we have $T\leq x^{2/5-\varepsilon/2}$ for sufficiently large $x$. For $\sigma\leq9/10$, Theorem~\ref{mainautomorphic} gives
\[ \sup_{1/2 \leq \sigma \leq 9/10} x^{\sigma-1}N_\pi(\sigma,T) \ll_{\pi} \sup_{1/2 \leq \sigma \leq 9/10} \left(\frac{T^{5/2+o(1)}}{x}\right)^{1-\sigma} \ll_{\pi, \varepsilon} x^{-c_\varepsilon}  \]
for some $c_\varepsilon > 0$. For $\sigma\geq9/10$, choose $\delta>0$ sufficiently small in terms of $\varepsilon$. By Lemma \ref{nearone},
\[ x^{\sigma-1}N_\pi(\sigma,T) \ll_{\pi,\delta} (\log T)^A \left(\frac{T^{2(1+\delta)}}{x}\right)^{1-\sigma}. \]
The zero-free region \eqref{VK-hypothesis} implies that $N_\pi(\sigma, T)$ vanishes unless
\[ 1-\sigma \geq \frac{c}{(\log T)^{2/3}(\log\log T)^{1/3}} \]
for some $c > 0$ as in \eqref{VK-hypothesis}. Since $T^{2(1+\delta)}/x\leq x^{-c_\varepsilon}$, it follows that
\[ \sup_{9/10 \leq \sigma \leq 1} x^{\sigma-1}N_\pi(\sigma,T) \ll_{\pi,\varepsilon} (\log T)^A \exp\left( -c_\varepsilon c \frac{\log x} {(\log T)^{2/3}(\log\log T)^{1/3}} \right) \ll_{\pi,\varepsilon} \exp\bigl(-2(\log x)^{1/4}\bigr). \]
Substituting these estimates into \eqref{explicit-formula-short} completes the proof.
\end{proof}

At every unramified prime $p$, we have $\Lambda_\pi(p)=\lambda_\pi(p)\log p$. Moreover, using the Kim--Sarnak bound \eqref{satakebound}, we can show that the contribution of higher prime powers and ramified primes is absorbed into the error term uniformly in the range of Proposition \ref{shortintervalLambda}, and Corollary \ref{shortintervalcoefficients} follows.

For dihedral automorphic $L$-functions, a zero-free region of the form \eqref{VK-hypothesis} is known due to \cite{Coleman}. By the modularity theorem, every CM elliptic curve $E/\QQ$ is associated to a cuspidal automorphic representation $\pi_E$ of $\GL_2(\AAA_{\QQ})$, and the theory of complex multiplication shows that $\pi_E$ is dihedral. Moreover, we have
\[ \lambda_{\pi_E}(p)=\frac{a_E(p)}{\sqrt p} \]
for every $p\nmid N_E$, and Corollary \ref{frobeniustraces} follows.

\section{Dedekind zeta functions and Chebotarev applications}\label{otherdegree2}

Let $K/\QQ$ be a quadratic field of \mbox{discriminant $D$}. Let $\zeta_K(s)$ be the associated Dedekind zeta function; note that $\zeta_K$ satisfies the Ramanujan \mbox{conjecture}. The familiar factorization $\zeta_K(s)=\zeta(s)L(s,\chi_D)$, where $\chi_D$ is the primitive quadratic Dirichlet character associated to $K$, gives
\[ N_{\zeta_K}(\sigma,T) = N_{\zeta}(\sigma,T) + N_{L(s,\chi_D)}(\sigma,T).\]
It follows from \eqref{degoneZDE} that \begin{equation}\label{quaddedekind} N_{\zeta_K}(\sigma,T) \ll T^{\frac{30}{13}(1-\sigma)+o(1)}. \end{equation}

It is useful to distinguish this result from a direct treatment of $\zeta_K$ as a degree $2$ $L$-function. Heath-Brown \cite[Theorem~2]{HB} proved the following zero density estimate for Dedekind zeta functions of quadratic number fields:
\[ N_{\zeta_K}(\sigma,T) \ll T^{\frac{8}{3}(1-\sigma)+o(1)}. \]
Applying our methods directly to $\zeta_K$ would replace $8/3$ by $5/2$, improving this result. However, the problem reduces to the degree $1$ case, and the stronger exponent $30/13$ follows.\footnote{Heath-Brown observed in the appendix of \cite{HB} that the factorization of $\zeta_K$ permits degree $1$ estimates to be transferred to quadratic Dedekind zeta functions. Nonetheless, the exponent $8/3$ has remained in subsequent applications involving zero density estimates. This is the input used, for instance, by Balog and Ono \cite{BO}.}

Cubic fields provide the first genuinely degree $2$ case: if $K/\QQ$ is a non-Galois cubic field, then
\[ \zeta_K(s)=\zeta(s)L(s,\varrho_K), \]
where $\varrho_K$ is an irreducible two-dimensional Artin representation. More generally, suppose that a Dedekind zeta function factors into Artin $L$-functions of degree at most $2$ and that each degree $2$ factor is holomorphic. Since an Artin representation has finite image, its unramified Frobenius eigenvalues are roots of unity; its coefficients therefore satisfy the Ramanujan bound. Thus, \mbox{Theorem}~\ref{main} applies without requiring the degree $2$ factors to be automorphic. This gives an improved short-interval Chebotarev range, sharpening those obtained from the short-interval Chebotarev theorems of Balog and Ono \cite{BO} and Thorner \cite[Corollary~1.2]{Thorner} for such extensions.

We define the counting function $\pi_C(X; K/K')$ for a number field $K'$, a normal extension $K/K'$, and a conjugacy class $C \subset \Gal(K/K')$ by
\small
\vspace{-0.5em}
\begin{equation} \pi_C(X; K/K')  := \# \{ \mathfrak{P} \subseteq \mathcal{O}_{K'}: \mathfrak{P} \text{ prime and unramified in } \mathcal{O}_K, \Big[ \frac{K/K'}{\mathfrak{P}} \Big]  = C, N_{K'/\QQ}(\mathfrak{P}) \leq X \} \end{equation}
\normalsize
{\spaceskip=0.7\fontdimen2\font where $\Big[ \frac{K/K'}{\mathfrak{P}} \Big]$ denotes the Artin symbol for the conjugacy class of the Frobenius above $\mathfrak{P}$ in $\Gal(K/K')$.}

As in the proof of Proposition \ref{shortintervalLambda}, the subpolynomial losses in the zero density estimates are harmless away from $\sigma = 1$. Near $\sigma = 1$, we use Lemma \ref{nearone} for the degree $2$ factors and Huxley's classical estimate for the degree $1$ factors. Additionally, $\zeta_K$ satisfies a Vinogradov--Korobov zero-free region \cite[Lemma 3.3]{BO}. Combining these inputs with the explicit formula underlying the effective Chebotarev density theorem \cite{LO} and Theorem \ref{main} gives the following.

\begin{cor}\label{dedekind} Let $K$ be a number field, $K/K'$ be a finite Galois extension, and $C \subseteq \Gal(K/K')$ be a conjugacy class. Suppose that, for $e_i, f_j \in \NN$,
\begin{equation}\label{lowdegreefactorization}
\zeta_K(s) = \prod_{i=1}^{r_1}L(s,\chi_i)^{e_i} \prod_{j=1}^{r_2}L(s,\varrho_j)^{f_j},
\end{equation} where the $\chi_i$ are Dirichlet characters and the $\varrho_j$ are irreducible two-dimensional Artin representations such that $L(s, \varrho_j)$ is holomorphic.
For every $\varepsilon > 0$,
\[ \pi_C(X+Y;K/K')-\pi_C(X;K/K') = \left( \frac{|C|}{|\Gal(K/K')|} + o(1) \right) \frac{Y}{\log X} \]
holds, as $X \rightarrow \infty$, uniformly for
\[ X^{3/5+\varepsilon}\leq Y\leq X. \]
If $r_2=0$, then the same asymptotic holds uniformly for $X^{17/30+\varepsilon}\leq Y\leq X$.
\end{cor}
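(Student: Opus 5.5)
The plan is to reduce the Chebotarev count to a short-interval estimate for $\psi_C(X;K/K')$ and then run the argument of Proposition~\ref{shortintervalLambda} factor by factor. First, by partial summation, and by discarding ramified primes and prime powers (which contribute $O(X^{1/2+o(1)}) = o(Y/\log X)$ in the stated range), it suffices to show
\[ \psi_C(X+Y) - \psi_C(X) = \Bigl(\tfrac{|C|}{|G|}+o(1)\Bigr)Y. \]
Here $G=\Gal(K/K')$ and $\psi_C$ is the usual weighted count of prime ideals of $K'$. Following Lagarias--Odlyzko \cite{LO} (via Deuring's reduction to a cyclic subextension and Hecke $L$-functions, or via Artin $L$-functions on $G$), the truncated explicit formula expresses the error term as a sum over zeros $\rho$ of $\zeta_K$ with $|\gamma|\le T$. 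Each zero is counted with multiplicity at most its multiplicity in $\zeta_K$, weighted by $\bigl|((X+Y)^\rho-X^\rho)/\rho\bigr| \le Y X^{\beta-1}$, plus an error $O(X(\log X)^{O(1)}/T)$. Taking $T=(X/Y)\exp(2(\log X)^{1/4})$ and grouping real parts in intervals of length $1/\log X$ reduces everything to showing
\[ (\log X)\sup_{1/2\le\sigma\le1} X^{\sigma-1}N_{\zeta_K}(\sigma,T) = o(1). \]

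Second, we use the factorization \eqref{lowdegreefactorization}. Since the $e_i,f_j$ are fixed,
\[ N_{\zeta_K}(\sigma,T)\ll \sum_i N_{\chi_i}(\sigma,T)+\sum_j N_{L(s,\varrho_j)}(\sigma,T). \]
Each $L(s,\varrho_j)$ is an admissible degree $2$ $L$-function with $q$th-moment input for every $q$. The Euler product and functional equation are standard for Artin $L$-functions. Holomorphy is assumed, and finite order follows from Brauer induction plus the assumed holomorphy. Ramanujan holds since the Frobenius eigenvalues are roots of unity, so $|a_n|\le\tau(n)$, which gives the moment bound. The $\Gamma$-factors have $\kappa_j\in\{0,1\}$, the conductor is an integer, and the root number has modulus one. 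Hence Theorem~\ref{main} gives $N_{L(s,\varrho_j)}(\sigma,T)\ll T^{\frac52(1-\sigma)+o(1)}$, since $\alpha(\sigma)\le 5/2$ throughout. For the Dirichlet factors, \eqref{degoneZDE} gives exponent $30/13$. For $\sigma\le 9/10$ the argument is as in Proposition~\ref{shortintervalLambda}. With $Y\ge X^{3/5+\varepsilon}$ we have $T\le X^{2/5-\varepsilon/2}$, so $T^{5/2+o(1)}\le X^{1-c_\varepsilon}$ and the supremum is $\ll X^{-c_\varepsilon(1-\sigma)}\le X^{-c_\varepsilon/10}$. When $r_2=0$, only the exponent $30/13$ matters. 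Then $Y\ge X^{17/30+\varepsilon}$ gives $T\le X^{13/30-\varepsilon/2}$, and $T^{30/13}\le X^{1-c_\varepsilon}$.

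Third, for $9/10\le\sigma\le1$ we combine two inputs. Lemma~\ref{nearone} handles the degree $2$ factors, and Huxley's $N_\chi(\sigma,T)\ll T^{\frac{12}{5}(1-\sigma)}(\log T)^{A}$, valid uniformly near $1$, handles the degree $1$ factors. Both give exponents with $T^{c}\le X^{1-c_\varepsilon}$ once $\delta$ is small; for $r_2=0$ one uses \cite{GM} near $1$ with explicit log loss, or Huxley's $12/5<30/13\cdot$\ldots. This requires some care, since $12/5>30/13$; near $\sigma=1$, though, one has the Halász--Turán or Jutila-type bound with exponent $2+o(1)$ and logarithmic loss, which suffices. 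The Vinogradov--Korobov zero-free region for $\zeta_K$ \cite[Lemma~3.3]{BO} forces $1-\sigma\gg(\log T)^{-2/3}(\log\log T)^{-1/3}$. Hence
\[ X^{\sigma-1}N\ll(\log T)^A\exp\bigl(-c(\log X)^{1/3-o(1)}\bigr) = o\bigl(1/\log X\bigr). \]

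The main obstacle is the last step for the $r_2=0$ case. There the zero density estimate near $\sigma=1$ must have exponent below $30/13$ with only a logarithmic loss, uniformly. One must also check carefully that the Lagarias--Odlyzko explicit formula can be run with zeros of $\zeta_K$ counted via these factor-wise density bounds, including control of multiplicities and of the possible exceptional zero, which is harmless since $K$ is fixed.
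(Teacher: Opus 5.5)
Your proposal is correct and follows the paper's route: the Lagarias--Odlyzko explicit formula, the factorization of $\zeta_K$, Theorem~\ref{main} (exponent at most $5/2$) for the $\varrho_j$ and the Guth--Maynard exponent $30/13$ for the $\chi_i$ away from $\sigma=1$, and, near $\sigma=1$, Lemma~\ref{nearone} together with a classical degree-$1$ bound and the Vinogradov--Korobov region for $\zeta_K$.

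Your worry about the case $r_2=0$ near $\sigma=1$ is justified for the $\frac{12}{5}$ form of Huxley's bound, since $\frac{12}{5}\cdot\frac{13}{30}>1$. It is settled by Huxley's near-one form $N_\chi(\sigma,T)\ll T^{3(1-\sigma)/(3\sigma-1)}(\log T)^{44}$, whose exponent is at most $30/17$ on $[9/10,1]$; this is presumably the version the paper means by ``Huxley's classical estimate.'' Alternatively, apply Lemma~\ref{nearone} to the admissible degree-$2$ function $L(s,\chi)^2$. A Jutila-type bound would not work here, because its $T^{\varepsilon}$ loss is fatal near $\sigma=1$.
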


Corollary~\ref{dedekind} applies unconditionally to Galois extensions $K/\QQ$ whose Galois groups are \mbox{generalized} dihedral groups; that is,
\[ \Gal(K/\QQ)\cong A\rtimes (\ZZ/2\ZZ) \]
where $A$ is abelian and $1 \in \ZZ/2\ZZ$ acts on $A$ by inversion. For such $K$, every irreducible representation has degree at most $2$, and the corresponding degree $2$ Artin $L$-functions are entire Hecke $L$-functions over the quadratic subfield $K^A$. 

As a concrete example, fix $n \in \NN$, and let $H_{-4n}$ be the ring class field of the order $\ZZ[\sqrt{-n}]$ in $\QQ(\sqrt{-n})$. Then $H_{-4n}/\QQ$ is generalized dihedral \cite[Lemma 9.3]{Cox}, with
\[ \Gal(H_{-4n}/\QQ) \cong \Gal(H_{-4n}/\QQ(\sqrt{-n})) \rtimes \ZZ/2\ZZ \cong \Cl(-4n)\rtimes \ZZ/2 \ZZ, \]
where $\ZZ/2\ZZ$ acts on the ring class group by inversion. Moreover, for every prime $p\nmid 2n$, $p$ is of the form $x^2+ny^2$ for $x,y \in \NN$ if and only if $p$ splits completely in $H_{-4n}$; see \cite[Theorem 9.4]{Cox}. Applying Corollary~\ref{dedekind} with the identity conjugacy class therefore gives
\[ \#\{X<p\leq X+Y:p \text{ is of the form } x^2+ny^2\} = \left(\frac{1}{2h(-4n)}+o(1)\right)\frac{Y}{\log X}\]
uniformly for $X^{3/5+\varepsilon}\leq Y\leq X$ for all $\varepsilon > 0$, where $h(-4n)=|\Cl(-4n)|$.

Finally, we record when the holomorphy hypothesis in Corollary~\ref{dedekind} is known. If
\[ \varrho:\Gal(\overline{\QQ}/\QQ) \longrightarrow \GL_2(\CC) \]
is an irreducible two-dimensional Artin representation, then the projective image of $\varrho$ is dihedral, tetrahedral, octahedral, or icosahedral; equivalently, it is isomorphic to a dihedral group, $A_4$, $S_4$, or $A_5$ \cite[Introduction]{BDST}. In the dihedral case, the corresponding Artin $L$-function is a Hecke $L$-function over a quadratic field, so both its holomorphy and automorphy are classical \cite{Artin2, Gelbart}. Further, the tetrahedral and octahedral cases are automorphic by the work of Langlands and Tunnell \cite{Langlands, Tunnell}. Moreover, every odd two-dimensional Artin representation over $\QQ$, including those of icosahedral type, is automorphic as a consequence of the proof of Serre's modularity conjecture \cite{KW1, KW2, Kisin}. Thus, the only unresolved case is that of an even icosahedral factor, and Corollary \ref{dedekind} is unconditional unless such a factor occurs. In this remaining case, its conclusion follows conditionally on Artin's holomorphy conjecture \cite{Artin} for this factor.

\section{Remarks on Higher Degree $L$-Functions}\label{highdeg}

It is natural to ask whether the work of Guth and Maynard can be used to improve zero density estimates for degree $d$ $L$-functions and, in particular, for $\GL_d$ automorphic $L$-functions, when $d > 2$.

From the reflection principle for Dirichlet polynomials, recall that it suffices to consider polynomials of length at most the square root of the analytic conductor. For a fixed degree $d$ $L$-function at height $|t| \asymp T$, this length is $\lessapprox \mathcal{C}_L(t)^{1/2} \asymp T^{d/2}$. For $d=2$, this gives $N \lessapprox T$, which is precisely the range where Guth and Maynard improve on classical large-values bounds. For $d>2$, the lengths of Dirichlet polynomials extend beyond this range, so their result is no longer useful.

Indeed, even if Montgomery's conjecture \eqref{montgomery-conjectural} were available for $N=T^{d/2+o(1)}$, it would only give
\[ |W| \lessapprox N^{2 - 2 \sigma} = T^{(d+o(1))(1-\sigma)} \ll_\varepsilon T^{(d+\varepsilon)(1-\sigma)} \]
for all $\varepsilon > 0$. This is the same exponent $(d+\varepsilon)(1-\sigma)$ appearing in Heath-Brown's work \cite{HB}.\footnote{Although Heath-Brown states his theorem only for Dedekind zeta functions, the argument in fact extends much further: it applies to a broad class of $L$-functions and requires only $\ell^2$ control of the coefficients.} These large-values estimates use no information about the coefficients of the Dirichlet polynomial beyond control of their size and can thus not produce an improved zero density estimate on their own. One would have to instead exploit additional structure specific to the underlying $L$-function. For example, recent improvements for $\GL_3$ automorphic $L$-functions make use of the $\GL_3$ Voronoi summation formula to obtain nontrivial critical-line second-moment estimates; see \cite{DLY, Pal}. This reflects the same barrier to establishing a range of $\sigma$ for which the density hypothesis holds for a general $L$-function of degree greater than $2$.

Note also that the same obstruction arises for $\GL_2$ automorphic $L$-functions over a number field $F \neq \QQ$. For a fixed automorphic representation $\pi$ of $\GL_2(\AAA_F)$, the analytic conductor of $L(s,\pi)$ at height $T$ is of size $T^{2[F:\QQ]}$, so one must  handle Dirichlet polynomials of length up to $T^{[F:\QQ]}$, which exceeds $T$.
\vspace{-1em}

\bibliographystyle{plain}
\bibliography{bib.bib}

@article{Artin,
  author  = {Artin, E.},
  title   = {{\"U}ber eine neue {A}rt von {$L$}-Reihen},
  journal = {Abh. Math. Seminar. Univ. Hambg.},
  volume  = {3},
  year    = {1924},
  pages   = {89--108}
}

@article{BO,
  author  = {Balog, A. and Ono, K.},
  title   = {The {Chebotarev} density theorem in short intervals and some questions of {Serre}},
  journal = {J. Number Theory},
  volume  = {91},
  year    = {2001},
  number  = {2},
  pages   = {356--371}
}

@article{GM,
  author  = {Guth, L. and Maynard, J.},
  title   = {New large value estimates for {Dirichlet} polynomials},
  journal = {Ann. of Math. (2)},
  volume  = {203},
  number  = {2},
  year    = {2026},
  pages   = {623--675}
}

@article{HB,
  author  = {Heath-Brown, D. R.},
  title   = {On the density of the zeros of the {Dedekind} zeta-function},
  journal = {Acta Arith.},
  volume  = {33},
  year    = {1977},
  number  = {2},
  pages   = {169--181}
}

@article{Huxley,
  author  = {Huxley, M. N.},
  title   = {On the difference between consecutive primes},
  journal = {Invent. Math.},
  volume  = {15},
  year    = {1972},
  pages   = {164--170}
}

@article{MontgomeryLargeValues,
  author  = {Montgomery, H. L.},
  title   = {Mean and large values of {Dirichlet} polynomials},
  journal = {Invent. Math.},
  volume  = {8},
  year    = {1969},
  pages   = {334--345}
}

@article{HuxleyLargeValues,
  author  = {Huxley, M. N.},
  title   = {Large values of {Dirichlet} polynomials},
  journal = {Acta Arith.},
  volume  = {24},
  year    = {1973},
  pages   = {329--346}
}

@article{Ingham,
  author  = {Ingham, A. E.},
  title   = {On the estimation of {$N(\sigma,T)$}},
  journal = {Q. J. Math.},
  volume  = {11},
  year    = {1940},
  pages   = {291--292}
}

@incollection{Ivic,
  author    = {Ivi{\'c}, A.},
  title     = {On zeta-functions associated with {Fourier} coefficients of cusp forms},
  booktitle = {Proceedings of the Amalfi Conference on Analytic Number Theory},
  editor    = {Bombieri, E. and others},
  publisher = {Universit{\`a} di Salerno},
  year      = {1992},
  pages     = {231--246}
}

@book{IK,
  author    = {Iwaniec, H. and Kowalski, E.},
  title     = {Analytic Number Theory},
  series    = {Amer. Math. Soc. Colloq. Publ.},
  volume    = {53},
  publisher = {American Mathematical Society},
  address   = {Providence, RI},
  year      = {2004}
}

@article{Jutila,
  author  = {Jutila, M.},
  title   = {On a density theorem of {H. L. Montgomery} for {$L$}-functions},
  journal = {Ann. Acad. Sci. Fenn. Ser. A I},
  volume  = {520},
  year    = {1972}
}

@book{Mont,
  author    = {Montgomery, H. L.},
  title     = {Topics in Multiplicative Number Theory}, series = {Lecture Notes in Mathematics},
  publisher = {Springer-Verlag},
  address   = {Berlin-Heidelberg-New York},
  year      = {1971}, volume = {227}
}

@article{Yashiro,
  author  = {Yashiro, Y.},
  title   = {Zero-density estimates for {$L$}-functions attached to cusp forms},
  journal = {JP J. Algebra Number Theory Appl.},
  volume  = {36},
  year    = {2015},
  pages   = {99--122}
}

@article{Tang,
  author  = {Tang, H.},
  title   = {Zero density of {$L$}-functions related to {Maass} forms},
  journal = {Front. Math. China},
  volume  = {8},
  year    = {2013},
  pages   = {923--932}
}

@article{HB2, author = {Heath-Brown, D. R.}, title = {A large values estimate for {D}irichlet polynomials}, journal = {J. Lond. Math. Soc. (2)}, volume ={20}, number = {1}, year = {1979}, pages = {8--18}}

@article{JPSS,
  author  = {Jacquet, H. and Piatetski-Shapiro, I. I. and Shalika, J. A.},
  title   = {Rankin--{S}elberg convolutions},
  journal = {Amer. J. Math.},
  volume  = {105},
  year    = {1983},
  number  = {2},
  pages   = {367--464}
}

@article{GJ, author = {Gelbart, S. and Jacquet, H.}, title = {A relation between automorphic representations of $\operatorname{GL}(2)$ and $\operatorname{GL}(3)$}, journal = {Ann. Sci. \'{E}c. Norm. Sup\'er. (4)}, volume = {11}, year = {1978}, pages = {471--542}}

@article{KS, author = {Kim, H. and Shahidi, F.}, title = {Functorial products for $\operatorname{GL}_2 \times \operatorname{GL}_3$ and the symmetric cube for $\operatorname{GL}_2$}, journal = {Ann. of Math. (2)}, volume = {155}, year = {2002}, pages = {837--893}, number = {3}
}

@article{Kim, author = {Kim, H.}, title = {Functoriality for the exterior square of $\operatorname{GL}_4$ and the symmetric fourth of $\operatorname{GL}_2$}, journal = {J. Amer. Math. Soc.}, volume = {16}, number = {1}, pages = {139--183}, year = {2003}, note = {with Appendix 1 by Dinakar Ramakrishnan and Appendix 2 by Henry H. Kim and Peter Sarnak.}}

@article{Thorner, author = {Thorner, J.}, title = {A variant of the {B}ombieri--{V}inogradov theorem in short intervals and some questions of {S}erre}, journal = {Math. Proc. Cambridge Philos. Soc.}, volume = {161}, number = {1}, year = {2016}, pages = {53--63}}

@article{BDST, author = {Buzzard, K. and Dickinson, M. and
             Shepherd-Barron, N. and Taylor, R.}, title = {On icosahedral {A}rtin representations}, journal = {Duke Math. J.}, volume = {109}, number = {2}, year = {2001}, pages = {283--318}}

@book{Langlands,
  author    = {Langlands, R.},
  title     = {Base Change for {$\operatorname{GL}(2)$}},
  series    = {Annals of Mathematics Studies},
  volume    = {96},
  publisher = {Princeton University Press},
  address   = {Princeton, NJ},
  year      = {1980}
}

@book{MontgomeryTenLectures,author    = {Montgomery, H. L.},
  title     = {Ten Lectures on the Interface Between Analytic Number
               Theory and Harmonic Analysis},
  series    = {CBMS Regional Conference Series in Mathematics},
  volume    = {84},
  publisher = {American Mathematical Society},
  address   = {Providence, RI},
  year      = {1994}
}

@article{Coleman,
  author  = {Coleman, M. D.},
  title   = {A zero-free region for the {Hecke} {$L$}-functions},
  journal = {Mathematika},
  volume  = {37},
  year    = {1990},
  number  = {2},
  pages   = {287--304}
}

@article{Deligne,
  author  = {Deligne, P.},
  title   = {La conjecture de {Weil}. {I}},
  journal = {Publ. Math. Inst. Hautes \'{E}tudes Sci.},
  volume  = {43},
  year    = {1974},
  pages   = {273--307}
}

@article{Tunnell, author = {J. Tunnell}, title = {Artin's conjecture for representations of octahedral type}, journal = {Bull. Amer. Math. Soc.}, volume = {5}, number = {2}, year = {1981}}

@article{Artin2, author = {E. Artin}, title = {Zur {T}heorie der {L}-{R}eihen mit allgemeinen {G}ruppencharakteren}, journal = {Abh. Math. Semin. Univ. Hambg.}, volume ={8}, year ={1931}, pages = {292--306}}

@article{KW1, author = {C. Khare and J.P. Wintenberger}, title = {Serre's modularity conjecture~{I}}, journal = {Invent. Math.}, volume = {178}, number ={3}, year = {2009}, pages = {485--504}}

@article{KW2, author = {C. Khare and J.P. Wintenberger}, title = {Serre's modularity conjecture~{II}}, journal = {Invent. Math.}, volume = {178}, number = {3}, year = {2009}, pages = {505--586}}

@article{Kisin, author = {M. Kisin}, title = {Modularity of $2$-adic {B}arsotti--{T}ate representations}, journal = {Invent. Math.}, volume = {178}, number = {3}, pages = {587--634}, year = {2009}}

@misc{HumphriesAutomorphicNotes,
  author       = {P. Humphries},
  title        = {Automorphic Forms and Automorphic Representations},
  howpublished = {Lecture notes for MATH 8510, University of Virginia, available from the author's teaching page},
  year         = {2023}}

@book{Gelbart,
  author    = {Gelbart, S.},
  title     = {Automorphic Forms on Adele Groups},
  series    = {Annals of Mathematics Studies},
  volume    = {83},
  publisher = {Princeton University Press},
  address   = {Princeton, NJ},
  year      = {1975}
}

@book{GodementJacquet,
  author    = {Godement, R. and Jacquet, H.},
  title     = {Zeta Functions of Simple Algebras},
  series    = {Lecture Notes in Mathematics},
  volume    = {260},
  publisher = {Springer-Verlag},
  address   = {Berlin},
  year      = {1972}
}

@inbook{LO,
author = {J. Lagarias and A. Odlyzko},
title = {Effective versions of the {C}hebotarev density theorem},
series = {Algebraic {n}umber {f}ields: $L$-functions and {G}alois {p}roperties},
year = {1977},
publisher = {Academic Press, London},
pages = {409--464},
}

@misc{DLY,
  author       = {Dasgupta, A. and Leung, W. H. and Young, M. P.},
  title        = {The second moment of the {$\mathrm{GL}_3$} standard
                  {$L$}-function on the critical line},
  howpublished = {arXiv:2407.06962}
}

@article{Pal,
  author  = {Pal, S.},
  title   = {Second moment of degree three {$L$}-functions},
  journal = {Int. Math. Res. Not. IMRN},
  volume  = {2025},
  year    = {2025},
  number  = {15},
  pages   = {1--37}
}

@book{Cox,
  author    = {Cox, D.},
  title     = {Primes of the Form $x^2+ny^2$: Fermat, Class Field
               Theory, and Complex Multiplication},
  edition   = {2},
  publisher = {John Wiley \& Sons},
  address   = {Hoboken, NJ},
  year      = {2013}
}

@article{HBconvexity,
  author  = {Heath-Brown, D. R.},
  title   = {Convexity bounds for {$L$}-functions},
  journal = {Acta Arith.},
  volume  = {136},
  number  = {4},
  year    = {2009},
  pages   = {391--395}
}

@article{MichelVenkatesh,
  author  = {Michel, P. and Venkatesh, A.},
  title   = {The subconvexity problem for {$\GL_2$}},
  journal = {Publ. Math. Inst. Hautes \'{E}tudes Sci.},
  volume  = {111},
  year    = {2010},
  pages   = {171--271}
}

@article{SankaranarayananSengupta,
  author  = {Sankaranarayanan, A. and Sengupta, J.},
  title   = {Zero-density estimate of {$L$}-functions attached to {Maass} forms},
  journal = {Acta Arith.},
  volume  = {127},
  year    = {2007},
  number  = {3},
  pages   = {273--284}
}

\end{document}